\documentclass{amsart}
\usepackage{amsfonts,amscd,amsthm,amsgen,amsmath,amssymb}
\usepackage[all]{xy}
\usepackage{epsfig,color}
\usepackage[vcentermath]{youngtab}
\newtheorem{theorem}{Theorem}[section]

\newtheorem{corollary}[theorem]{Corollary}
\newtheorem{proposition}[theorem]{Proposition}

\theoremstyle{definition}
\newtheorem{definition}[theorem]{Definition}

\theoremstyle{remark}
\newtheorem{remark}[theorem]{Remark}

\numberwithin{equation}{section}




\begin{document}

\setlength\parskip{0.5em plus 0.1em minus 0.2em}

\title{On the mapping class groups of simply-connected smooth $4$-manifolds}
\author{David Baraglia}

\address{School of Mathematical Sciences, The University of Adelaide, Adelaide SA 5005, Australia}

\email{david.baraglia@adelaide.edu.au}


\date{\today}

\begin{abstract}

The mapping class group $M(X)$ of a smooth manifold $X$ is the group of smooth isotopy classes of orientation preserving diffeomorphisms of $X$. We prove a number of results about the mapping class groups of compact, simply-connected, smooth $4$-manifolds. We prove that $M(X)$ is non-finitely generated for $X = 2n \mathbb{CP}^2 \# 10n \overline{\mathbb{CP}^2}$, where $n \ge 3$ is odd. Let $\Gamma(X)$ denote the group of automorphisms of the intersection lattice of $X$ that can be realised by diffeomorphisms. Then $M(X)$ is an extension of $\Gamma(X)$ by $T(X)$, the Torelli group of isotopy classes of diffeomorphisms that act trivially in cohomology. We prove that this extension is split for connected sums of $\mathbb{CP}^2$, but is not split for $2\mathbb{CP}^2 \# n \overline{\mathbb{CP}^2}$, where $n \ge 11$. We prove that the Nielsen realisation problem fails for certain finite subgroups of $M( p \mathbb{CP}^2 \# q \overline{\mathbb{CP}^2} )$ whenever $p+q \ge 4$. Lastly we study the extension $M_1(X) \to M(X)$, where $M_1(X)$ is the group of isotopy classes of diffeomorphisms of $X$ which fix a neighbourhood of a point. When $X = K3$ or $K3 \# (S^2 \times S^2)$ we prove that $M_1(X) \to M(X)$ is a non-trivial extension of $M(X)$ by $\mathbb{Z}_2$. Moreover, we completely determine the extension class of $M_1(K3) \to M(K3)$.

\end{abstract}


\maketitle




\section{Introduction}

Let $X$ be a compact, oriented, smooth, simply-connected $4$-manifold. Define the mapping class group $M(X)$ to be the group of smooth isotopy classes of orientation preserving diffeomorphisms of $X$. There is considerable interest in the groups $M(X)$, although little is known about their structure. In this paper we will prove a number of new results concerning the structure of mapping class groups of smooth $4$-manifolds.

Recall that the second cohomology group $L_X = H^2(X ; \mathbb{Z})$ of $X$ equipped with its intersection form is a unimodular lattice. We let $Aut(L_X)$ denote the automorphism group of the lattice $L_X$. The group of orientation preserving diffeomorphisms of $X$ acts on $L_X$ via $f \mapsto (f^{-1})^*$. This action depends only on this isotopy class and so defines a homomorphism $M(X) \to Aut(L_X)$. Denoting the image of this map by $\Gamma(X)$ and the kernel by $T(X)$, we obtain a short exact sequence
\begin{equation}\label{equ:ext}
1 \to T(X) \to M(X) \to \Gamma(X) \to 1.
\end{equation}
We call $T(X)$ the {\em Torelli group} of $X$. It is the group of isotopy classes of diffeomorphisms of $X$ that act trivially in cohomology. By a result of Quinn, $T(X)$ can also be defined as the group of isotopy classes of diffeomorphisms which are continuously isotopic to the identity \cite{qui}. The group $\Gamma(X)$ is the group of automorphisms of $L_X$ that can be realised by diffeomorphisms of $X$.

Understanding the group $M(X)$ necessitates an understanding of the groups $T(X)$, $\Gamma(X)$ and the extension (\ref{equ:ext}). The group $\Gamma(X)$ is known for some classes of $4$-manifolds. In particular, a theorem of Wall implies that $\Gamma(X) = Aut(L_X)$ for a large class of $4$-manifolds \cite{wall}. In contrast, the Torelli group $T(X)$ is poorly understood. Ruberman showed that $T(X)$ is not finitely generated for certain $X$ \cite{rub3}. However this does not imply that $M(X)$ is not finitely generated, since a finitely generated group can have subgroups which are not finitely generated. Our first main result confirms that $M(X)$ is not finitely generated for certain simply-connected $4$-manifolds.

\begin{theorem}\label{thm:nfg0}
Let $X = 2n \mathbb{CP}^2 \# 10n \overline{\mathbb{CP}^2}$, where $n \ge 3$ is odd. Then $M(X)$ is not finitely generated. More precisely, the following holds:
\begin{itemize}
\item[(1)]{There is an index $2$ subgroup $M_+(X)$ of $M(X)$ and a surjective homomorphism $\Phi : M_+(X) \to \mathbb{Z}^\infty$ from $M_+(X)$ to $\mathbb{Z}^{\infty}$, where $\mathbb{Z}^{\infty}$ denotes a free abelian group of countably infinite rank.}
\item[(2)]{The mod $2$ reduction of $\Phi$ extends to a surjective homomorphism $\Phi : M(X) \to \mathbb{Z}_2^\infty$.}
\end{itemize}
\end{theorem}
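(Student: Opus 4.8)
The plan is to construct the homomorphism $\Phi$ out of a family of diffeomorphism invariants that detect elements of the Torelli group, and then show these invariants are independent and vary over a countably infinite range. The natural tool here is the family of invariants coming from Seiberg-Witten theory (or Bauer-Furuta / families gauge theory), which is exactly the machinery Ruberman used to show $T(X)$ is not finitely generated. The manifold $X = 2n\mathbb{CP}^2 \# 10n\overline{\mathbb{CP}^2}$ is chosen with $b^+ = 2n$ and $b^- = 10n$ so that $b^+$ is even and large enough to support nonvanishing families invariants while admitting an abundance of distinct spin$^c$ or cohomological data to index them.

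Let me sketch the concrete steps. First I would identify the index-$2$ subgroup $M_+(X)$ as the diffeomorphisms preserving some orientation-type data (for instance an orientation of $H^+(X;\mathbb{R})$, the maximal positive-definite subspace, or equivalently lying in a distinguished index-$2$ subgroup of $\Gamma(X)$); the sign-of-orientation homomorphism $M(X) \to \mathbb{Z}_2$ cuts this out. Second, for each diffeomorphism $f \in M_+(X)$ and a suitable choice of spin$^c$ structure (or a path of metrics and a reference), I would assemble a \emph{relative families Seiberg-Witten invariant} $\Phi_{\mathfrak{s}}(f) \in \mathbb{Z}$, using the mapping torus of $f$. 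The key formal properties are that $\Phi_{\mathfrak{s}}$ is additive under composition (so it is a homomorphism on $M_+(X)$) and is a well-defined isotopy invariant. Third, I would exhibit a countably infinite family of spin$^c$ structures $\{\mathfrak{s}_i\}$ and corresponding diffeomorphisms whose invariants form a linearly independent set in $\mathbb{Z}^\infty$, forcing the combined map $\Phi = (\Phi_{\mathfrak{s}_i})_i : M_+(X) \to \mathbb{Z}^\infty$ to be surjective onto a countably-infinite-rank free abelian group. For part (2), I would check that the mod $2$ reductions of these invariants remain nontrivial on the full group $M(X)$ — the orientation-reversal obstruction that halved the group should disappear mod $2$, because the sign ambiguity in the families invariant is exactly a sign — yielding $\Phi : M(X) \to \mathbb{Z}_2^\infty$ surjective.

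\emph{The hard part} will be establishing the nonvanishing and linear independence of the families invariants: one must produce genuinely distinct diffeomorphisms (realised as compositions of local or ``corklike'' diffeomorphisms, or via Ruberman-type constructions using connected sums with diffeomorphisms of rational surfaces) whose families invariants are computably nonzero and independent. This is where the specific arithmetic of $b^+ = 2n$, $b^- = 10n$, and the oddness of $n$ must enter: the dimension of the Seiberg-Witten moduli space in the family, governed by $\tfrac{1}{4}(c_1(\mathfrak{s})^2 - 2\chi - 3\sigma)$ together with the $+1$ from the family parameter, has to land in the right degree for a $\mathbb{Z}$-valued (rather than $\mathbb{Z}_2$-valued or vanishing) invariant, and the oddness of $n$ presumably ensures a wall-crossing or mod-$2$ nonvanishing that would otherwise fail. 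I would expect the bookkeeping of these dimension and parity constraints, rather than the algebraic assembly of $\Phi$, to be the genuine obstacle; the homomorphism property and the passage to $\mathbb{Z}^\infty$ and $\mathbb{Z}_2^\infty$ are then formal.
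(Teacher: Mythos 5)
Your overall strategy --- families Seiberg--Witten invariants of mapping cylinders, assembled into homomorphisms and evaluated on Ruberman-type diffeomorphisms --- is indeed the paper's method, but there are two genuine gaps, and the first is precisely where the paper's new idea lives. You claim that for a fixed spin$^c$ structure $\mathfrak{s}$ the invariant $\Phi_{\mathfrak{s}}$ ``is additive under composition (so it is a homomorphism on $M_+(X)$)''. This fails: for $f \in M_+(X)$ not preserving $\mathfrak{s}$, the mapping cylinder carries no spin$^c$ structure restricting to $\mathfrak{s}$ on the fibres, so $SW_{\mathfrak{s}}(f)$ is not even defined; what is defined for all $f$ is the relative count $SW_{\mathfrak{s}}(h, f(h))$, and this satisfies the twisted cocycle identity $\phi_h(gf)(\mathfrak{s}) = \phi_h(g)(\mathfrak{s}) + sgn_+(g)\,\phi_h(f)(g^{-1}\mathfrak{s})$ rather than additivity. (The paper explicitly remarks that Ruberman's total invariant $SW^{tot}_{\mathfrak{s}}$, which is the kind of object you describe, is \emph{not} a group homomorphism.) The repair is not formal: one must regard $SW$ as a class in $H^1(M(X); \widehat{\mathbb{Z}}[\mathcal{S}(X)])$ with twisted coefficients and then extract honest $\mathbb{Z}$-valued homomorphisms on $M_+(X)$ by summing over $\Gamma(X)$-\emph{invariant} subsets $\mathcal{O} \subseteq \mathcal{S}(X)$; in the proof these are the sets $\mathcal{O}_k$ of spin$^c$ structures whose characteristic is $k$ times a primitive element of square zero, and $\Phi = \bigoplus_{q} SW_{\mathcal{O}_{nq-q-1}}$.

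The second gap is the one you flag yourself but leave entirely open: producing diffeomorphisms with nonvanishing, independent invariants. This is the bulk of the proof and explains the hypotheses you guessed at. The paper writes $X = X' \# (S^2 \times S^2)$ with $X' = (2n-1)\mathbb{CP}^2 \# (10n-1)\overline{\mathbb{CP}^2}$ and uses the logarithmic transforms $E(n)_q$ of the elliptic surface $E(n)$: oddness of $n$ is needed so that $E(n)_q$ is non-spin, hence has diagonal intersection form and $E(n)_q \# (S^2 \times S^2) \cong X$ (this, not a moduli-space dimension parity or wall-crossing issue, is the role of $n$ odd). One then takes $f_q = \psi_q \circ (id_{E(n)_q} \# \rho) \circ \psi_q^{-1}$, where $\rho$ is a reflection of $S^2 \times S^2$ acting as $-1$ on $H^2$, sets $t_q = f_q \circ f_0 \in T(X)$, and computes $SW_{\mathcal{O}_k}(t_q)$ via the gluing formula, which reduces the families invariant mod $2$ to the ordinary Seiberg--Witten invariants of $E(n)_q$, whose basic classes are multiples of the fibre. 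Charge conjugation symmetry, which uses $b_+(X) = 2n \equiv 2 \pmod{4}$, pairs up contributions, and the outcome is a triangular pattern: $SW_{\mathcal{O}_{nq-q-1}}(t_q) \equiv 2 \pmod{4}$ while $SW_{\mathcal{O}_{nq'-q'-1}}(t_q) \equiv 0 \pmod{4}$ for $q' > q$, which yields the linear independence needed for parts (1) and (2). Without both the orbit-sum construction and this computation, the proposal does not constitute a proof.
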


As this paper was nearing completion we received a preprint by Hokuto Konno \cite{kon} which also proves that the mapping class groups of simply-connected $4$-manifolds can be non-finitely generated. Konno's proof uses essentially the same method as ours, however we obtained our proofs completely independently.

\begin{remark}
It is interesting to contrast Theorem \ref{thm:nfg0} with finiteness results for mapping class groups in other dimensions. Let $X$ be a compact, simply-connected smooth manifold of dimension $d$ and $M(X) = \pi_0(Diff(X))$ the mapping class group. If $d \neq 4$, then $M(X)$ is finitely generated. For $d \le 3$, finite generation holds for any compact oriented manifold (see \cite{dehn} for $d=2$ and \cite{hm} for $d=3$). If $d \ge 5$ then $M(X)$ is finitely generated \cite[Theorem 2.6]{bkk}. Note that Theorem 2.6 of \cite{bkk} is only stated for $d \ge 6$, but when $X$ is simply-connected, the proof carries over to $d=5$. In the proof of \cite[Theorem 2.6]{bkk}, dimension $6$ only enters in the point (i) in the proof, but Cerf's theorem says that in the simply-connected case $\pi_0(C^{Diff}(X)) = 0$, and in \cite[Proposition 2.7]{bkk} where it is not necessary. This follows from specialising Triantafillou \cite{tri} to simply-connected manifolds, where none of the oversights mentioned in \cite[\textsection 2.2]{bkk} cause a problem\footnote{I thank Alexander Kupers for explaining why \cite[Theorem 2.6]{bkk} works for simply-connected $5$-manifolds.}.

One may also consider the larger group $M'(X)$ consisting is isotopy classes of diffeomorphisms which are not necessarily orientation preserving. Since $M(X)$ has finite index in $M'(X)$, it follows from Schreier's lemma \cite{ser} that if $M(X)$ is not finitely generated then neither is $M'(X)$.
\end{remark}

\begin{remark}
Let $X$ be a compact, simply-connected smooth $4$-manifold and let $M^{top}(X) = \pi_0( Homeo(X) )$ be the topological mapping class group. By work of Freedman \cite{fre} and Quinn \cite{qui}, the natural map $M^{top}(X) \to Aut( H^2(X ; \mathbb{Z}))$ to the group of automorphisms of the intersection lattice $H^2(X ; \mathbb{Z})$ is an isomorphism. By a result of Siegel \cite{sie}, the automorphism group of any lattice is finitely generated. Hence $M^{top}(X)$ is finitely generated.

In contrast, we do not know whether the group $\Gamma(X) \subseteq Aut(H^2(X ; \mathbb{Z}))$ is always finitely generated, although we conjecture that it is.
\end{remark}

Our next result concerns the question of whether or not the sequence (\ref{equ:ext}) admits a splitting.

\begin{theorem}\label{thm:sns}
The following holds:
\begin{itemize}
\item[(1)]{Let $X = n \mathbb{CP}^2$, where $n \ge 1$. Then there exists a splitting $\Gamma(X) \to M(X)$.}
\item[(2)]{Let $X = (S^2 \times S^2) \# X'$, where $b_+(X') = 1$, $b_-(X') \ge 10$. Then there does not exist a splitting $\Gamma(X) \to M(X)$.}
\end{itemize}
\end{theorem}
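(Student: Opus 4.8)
The plan is to prove the two parts by rather different mechanisms, since one is a construction and the other is an obstruction.

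For part (1), I would first identify $Aut(L_X)$ explicitly. For $X = n\mathbb{CP}^2$ the lattice is $L_X \cong \langle 1\rangle^{\oplus n}$ and $Aut(L_X)$ is the group of signed permutation matrices, i.e. the hyperoctahedral group $W_n = (\mathbb{Z}_2)^n \rtimes S_n$. The idea is to realise all of $W_n$ geometrically in a way compatible with composition, so that the resulting homomorphism $W_n \to M(X)$ is a section of $M(X)\to\Gamma(X)$; this simultaneously shows $\Gamma(X) = Aut(L_X)$. The two kinds of generators are realised as follows. Complex conjugation $c$ on a single $\mathbb{CP}^2$ is orientation preserving (it has determinant $(-1)^2 = +1$ on each chart $\mathbb{C}^2$) and acts as $-1$ on $H^2(\mathbb{CP}^2;\mathbb{Z})$, since it preserves a projective line setwise while reversing its complex orientation; this realises the sign change in one factor. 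The permutations of the $n$ summands are realised by diffeomorphisms interchanging the $\mathbb{CP}^2$-summands. To assemble these into a single homomorphism $W_n\to M(X)$ I would perform the connected sum $W_n$-equivariantly, gluing $n$ identical copies of $(\mathbb{CP}^2, c)$ along conjugation-invariant balls centred at real fixed points to a symmetric central piece on which $S_n$ permutes the $n$ gluing sites. The defining relations of $W_n$ then hold on the nose, or up to isotopies supported in the necks. The point requiring care — and the main technical step of part (1) — is the neck compatibility: the conjugations do not restrict to the identity on the gluing spheres $S^3$, so one must use that $c|_{S^3}$ is orientation preserving, hence isotopic to the identity by Cerf's theorem $\pi_0(\mathrm{Diff}^+(S^3)) = 0$, and then check that the resulting collar ambiguities do not spoil the relations in $M(X)$.

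For part (2) the plan is to produce a gauge-theoretic homomorphism that kills torsion and to exhibit a finite-order element of $\Gamma(X)$ whose every lift to $M(X)$ has infinite order. Since $b_+(X) = 2$, a generic one-parameter family of metrics meets the Seiberg--Witten wall in finitely many points, and for a spin$^c$ structure $\mathfrak{s}$ of formal dimension $d(\mathfrak{s}) = -1$ this gives, for each diffeomorphism $f$ preserving $\mathfrak{s}$, a family (mapping-torus) Seiberg--Witten invariant $FSW_\mathfrak{s}(f)\in\mathbb{Z}$ in the sense of Ruberman. Gluing of mapping tori makes this additive, so it defines a homomorphism
\[
FSW_\mathfrak{s} : M_\mathfrak{s}(X) \to \mathbb{Z},
\]
where $M_\mathfrak{s}(X)\le M(X)$ is the stabiliser of $\mathfrak{s}$ (note that $T(X)$ fixes every spin$^c$ structure, since for simply-connected $X$ these are classified by their characteristic first Chern class). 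Because $\mathbb{Z}$ is torsion free, every finite-order element of $M_\mathfrak{s}(X)$ lies in the kernel. Now suppose a splitting $s:\Gamma(X)\to M(X)$ existed and let $\gamma\in\Gamma(X)$ be a finite-order element fixing $\mathfrak{s}$; then $s(\gamma)\in M_\mathfrak{s}(X)$ has finite order, so $FSW_\mathfrak{s}(s(\gamma)) = 0$. Thus it suffices to find one finite-order $\gamma\in\Gamma(X)$ fixing a suitable $\mathfrak{s}$ such that $FSW_\mathfrak{s}$ is nonzero on the entire coset $p^{-1}(\gamma)$, equivalently such that $FSW_\mathfrak{s}(\tilde\gamma)\notin FSW_\mathfrak{s}(T(X))$ for one (hence every) lift $\tilde\gamma$; this forces every lift of $\gamma$ to have infinite order and rules out a section.

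The main obstacle is this last, gauge-theoretic, computation. I would first choose $\mathfrak{s}$ with $c_1(\mathfrak{s})^2 = \sigma(X) + 8 = 10 - b_-(X)$, the characteristic-vector condition making $d(\mathfrak{s}) = -1$; the hypothesis $b_-(X')\ge 10$ (so $b_-(X)\ge 11$) guarantees such classes of negative square exist and, more importantly, that the relevant invariants of $X'$ do not vanish. Writing $X = (S^2\times S^2)\# X'$ with $b_+(X') = 1$, I would compute $FSW_\mathfrak{s}$ via a connected-sum/gluing formula reducing it to ordinary Seiberg--Witten invariants of $X'$ together with their wall-crossing terms (available precisely because $b_+(X') = 1$), and take $\gamma$ to be a reflection supported away from the $S^2\times S^2$ summand so that the two contributions separate. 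The crux is to show that the value of $FSW_\mathfrak{s}$ on a lift $\tilde\gamma$ of $\gamma$ does not lie in the subgroup $FSW_\mathfrak{s}(T(X))$ of Torelli values — most cleanly by arranging the wall-crossing count to be odd while the Torelli contributions are even. Establishing this parity statement, and verifying that $\gamma$ can be taken of finite order in $\Gamma(X)$ while fixing $\mathfrak{s}$, is where the real work lies; the remaining group-theoretic deductions are formal.
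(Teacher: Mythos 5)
Your plan for part (1) fails at its central step, and the failure is documented in this very paper. An honest $W_n$-equivariant gluing, with the relations holding ``on the nose'', would in particular produce commuting smooth involutions $c_1,\dots,c_4$ on $X = n\mathbb{CP}^2$ (for $n \ge 4$) generating a $\mathbb{Z}_2^4$ and inducing the reflections $r_1,\dots,r_4$ on $H^2(X;\mathbb{Z})$. That is exactly what Theorem \ref{thm:n} rules out: its proof (Section \ref{sec:nielsen}) uses nothing about Dehn twists beyond their action on cohomology --- by Edmonds \cite{ed} the fixed locus of $c_1$ contains a unique $\mathbb{RP}^2$, on which $\langle c_2,c_3,c_4\rangle \cong \mathbb{Z}_2^3$ would then act effectively, contradicting the fact that finite groups of diffeomorphisms of $\mathbb{RP}^2$ are conjugate into $PO(3)\cong SO(3)$, whose elementary abelian $2$-subgroups have rank at most $2$. (Separately, for large $n$ there is no candidate central piece: $S_n$ does not act smoothly and faithfully on $S^4$ permuting $n$ marked points.) So you are forced into your fallback --- relations holding only up to isotopies in the necks --- and then verifying that those isotopies can be chosen coherently so that \emph{every} relation of $W_n$ holds in $M(X)$ is not a technical afterthought; it is the entire content of the theorem, and your proposal offers no mechanism for it. The paper's construction is designed precisely to sidestep this: it never makes $W_n$ act on $X$ at all. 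Instead $H_n = W_n$ acts freely on the auxiliary space $\widetilde{C}_n$ of configurations of $n$ points in $S^4$ decorated with complex structures $I_j$ on the tangent spaces (the reflection $r_j$ acts by $I_j \mapsto -I_j$, free because it moves the decoration rather than the point); blowing up $\widetilde{C}_n \times S^4$ along the tautological sections and descending to $B = \widetilde{C}_n/H_n$ gives a smooth bundle with fibre $n\mathbb{CP}^2$ and $\pi_1(B) \cong H_n$, whose geometric monodromy $\pi_1(B) \to M(X)$ is \emph{automatically} a homomorphism. All relation-checking is absorbed into the topology of $B$.

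For part (2), your formal reductions are fine, but the argument bottoms out in the assertion $FSW_{\mathfrak{s}}(\tilde\gamma) \notin FSW_{\mathfrak{s}}(T(X))$, and this is a gap you cannot close: it requires controlling the families Seiberg--Witten invariants of the \emph{entire} Torelli group, about which essentially nothing is known --- indeed Section \ref{sec:sw2} shows that for closely related manifolds such invariants are wildly nontrivial on $T(X)$ (surjecting onto $\mathbb{Z}^\infty$), so your hoped-for parity dichotomy (``Torelli contributions are even'') has no basis and is not supplied by any gluing formula you invoke. A smaller but real defect: if $\gamma$ is a single reflection then $sgn_+(\gamma) = -1$, so the integer-valued invariant of its lifts is not even defined (there is no orientation of the families moduli space), only its mod $2$ reduction. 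The paper's proof avoids $T(X)$ entirely by using the obstruction theorem of \cite{bar1}, restated as Theorem \ref{thm:nolift}: when $b_+(X)=2$, a homomorphism $\rho : \pi_1(\Sigma) \to \Gamma(X)$ with $w_2(H^+_\rho)\neq 0$ which fixes a characteristic $c$ with $c^2 > \sigma(X)$ admits no lift to $M(X)$, because a lift would produce a smooth family over $\Sigma$ with monodromy $\rho$, which families Seiberg--Witten theory forbids. One takes $\Sigma = \mathbb{RP}^2$ and sends the generator to the involution $f = r_u r_v$, where $u = x+y$ comes from the $S^2 \times S^2$ summand, $v = x'+y'$ from a hyperbolic summand of $L' \cong H' \oplus E_8 \oplus L''$ (here $L''$ is negative-definite diagonal of rank $m = b_-(X')-9 \ge 1$; this is where $b_-(X')\ge 10$ and the non-spinness of $X'$ enter), and $c = e_1+\cdots+e_m$ is the characteristic element of $L''$. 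Then $f$ acts as $-1$ on the positive-definite plane spanned by $u,v$, so $w_2(H^+_\rho)\neq 0$, while $f(c)=c$ and $c^2 = \sigma(X)+8 > \sigma(X)$. This kills every lift of the subgroup $\langle f\rangle$ at once --- in particular the finite-order lift a splitting would provide --- with no information about $T(X)$ required.
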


More precise information about the failure of a splitting in case (2) is provided by Theorem \ref{thm:nolift}.

\begin{remark}
In \cite{bk3} it is shown when $X$ is a $K3$ surface, there is a splitting $\Gamma(X) \to M(X)$. It is also easy to see that splittings exist for $S^2 \times S^2$ and $\mathbb{CP}^2 \# \overline{\mathbb{CP}^2}$.
\end{remark}

Our next result concerns the Nielsen realisation problem. Recall that the Nielsen realisation problem for a smooth manifold $X$ asks whether a subgroup $G$ of the mapping class group of $X$ can be lifted to a subgroup of $Diff(X)$. Recent results of Baraglia--Konno \cite{bk3}, Farb--Looijenga \cite{fl}, Konno \cite{kon0} show that Nielsen realisation fails for many simply-connected spin $4$-manifolds. Arabadji--Baykur showed that there are many non-spin $4$-manifolds with finite non-trivial fundamental group for which Nielsen realisation fails \cite{ab} and Konno--Miyazawa--Taniguchi gave examples with simply-connected indefinite non-spin $4$-manifolds \cite{kmt0}.

\begin{theorem}\label{thm:n}
Let $X = X' \# p \mathbb{CP}^2 \# q \overline{\mathbb{CP}^2}$ where $X'$ is a compact, smooth, simply-connected $4$-manifold and $p+q \ge 4$. Then $M(X)$ contains a subgroup isomorphic to $\mathbb{Z}_2^4$ which can not be lifted to $Diff(X)$.
\end{theorem}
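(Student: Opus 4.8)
The plan is to exhibit a specific $\mathbb{Z}_2^4$ subgroup of $M(X)$ and obstruct its realisation using a gauge-theoretic invariant. The natural source of finite-order diffeomorphisms is the connected-sum structure: each $\mathbb{CP}^2$ or $\overline{\mathbb{CP}^2}$ summand admits an orientation-preserving involution (for instance, a diffeomorphism acting as $-1$ on the corresponding $H^2$ generator, realised by a reflection of the summand), and these commute with one another since they are supported in disjoint regions after removing small balls. With $p+q \ge 4$ summands I would select four such summand involutions $\sigma_1,\dots,\sigma_4$, each of order $2$, generating a subgroup isomorphic to $\mathbb{Z}_2^4$ in $M(X)$. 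The first step is therefore to write down these involutions carefully, verify that their isotopy classes are independent and mutually commuting, and confirm the subgroup they generate is genuinely $\mathbb{Z}_2^4$ rather than a smaller quotient.

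The second step is to set up the realisation obstruction. If the subgroup $G \cong \mathbb{Z}_2^4$ lifted to a subgroup $\widetilde{G} \subset Diff(X)$, then $\widetilde{G}$ would act smoothly on $X$ with each generator an honest involution. I would then seek an equivariant Seiberg--Witten or families invariant that must vanish (or satisfy a relation) for any genuine smooth action but is nonzero for the formal data coming from the mapping classes. The cleanest route is to use a $\mathbb{Z}_2$-equivariant or families Seiberg--Witten argument: for a smooth involution one can compute the mod-$2$ count of solutions on the fixed-point data and derive a constraint, typically via a Lefschetz-type or $G$-signature relation on the fixed locus, which forces the characteristic numbers of the would-be fixed sets to be compatible. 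The heart of the matter is to show that the particular $\mathbb{Z}_2^4$ we chose violates such a constraint.

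The main obstacle, as I see it, is producing an invariant that is sensitive to the full rank-$4$ group rather than to a single involution. A single reflection on one $\mathbb{CP}^2$ summand is realisable (it extends the linear involution of $\mathbb{CP}^2$), so no one generator is obstructed; the obstruction must come from the interaction of all four commuting involutions acting simultaneously. I therefore expect the crux to be a computation of the $\mathbb{Z}_2^4$-equivariant families Bauer--Furuta or Seiberg--Witten invariant, showing that the constraints imposed by simultaneous smooth realisation of all four generators (e.g. compatibility of fixed-point contributions across the whole group, or a sum rule over the characters of $\mathbb{Z}_2^4$) cannot be met. Concretely, I would compare the dimension of the fixed-point set and the index contributions predicted by each character of $\mathbb{Z}_2^4$ against what a smooth action would permit, and locate the numerical contradiction there.

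Finally, since the statement allows an arbitrary simply-connected summand $X'$, I would need the obstruction to be insensitive to $X'$: the chosen involutions act trivially on $X'$, so the fixed-point and index data on the $X'$ part contribute only through universal characteristic-number terms that cancel in the relevant relation. I would isolate the $\mathbb{CP}^2 \# \overline{\mathbb{CP}^2}$ contributions, verify that the $X'$ contributions enter symmetrically and drop out, and thereby reduce to a model computation on $p \mathbb{CP}^2 \# q \overline{\mathbb{CP}^2}$ with $p+q \ge 4$. The delicate point will be confirming that this reduction is legitimate and that no additional solutions or correction terms from $X'$ spoil the vanishing argument.
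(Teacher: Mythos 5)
Your construction of the subgroup is essentially the one the paper uses, but you should be aware of one subtlety: a literal ``reflection of the summand'' (e.g.\ complex conjugation on $\mathbb{CP}^2$) does not fix a ball pointwise, so it cannot be cut off to a diffeomorphism supported in the punctured summand while remaining an involution on the nose. The correct implementation is to take the Dehn twists $t_1,\dots,t_4$ around four of the embedded $\pm 1$-spheres coming from the $\mathbb{CP}^2$ and $\overline{\mathbb{CP}^2}$ summands: these are supported near disjoint spheres (hence commute), and Seidel's theorem is what guarantees each is an involution in $M(X)$; since their images in $Aut(H^2(X;\mathbb{Z}))$ are the four commuting reflections, the group they generate really is $\mathbb{Z}_2^4$. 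Up to this point your outline and the paper agree.

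The genuine gap is in the obstruction step, which is the heart of the theorem and which you never actually produce: you propose ``an equivariant families Bauer--Furuta or Seiberg--Witten invariant'' and then say you would ``locate the numerical contradiction there,'' but no such computation is exhibited, and there is good reason to doubt this route can work in the stated generality. The summand $X'$ is arbitrary; taking $X' = S^4$ gives $X = p\mathbb{CP}^2 \# q\overline{\mathbb{CP}^2}$, which admits a positive scalar curvature metric and is a connected sum of pieces with $b_+ > 0$, so all (mod $2$) Seiberg--Witten counts vanish identically; indeed the paper stresses that these are the first \emph{definite} simply-connected examples where Nielsen realisation fails, i.e.\ precisely the regime where gauge theory has no traction. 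The paper's actual argument is purely topological and is the idea you are missing: assuming a lift by commuting honest involutions $\sigma_1,\dots,\sigma_4$, Edmonds' theorem (since $\sigma_1$ acts on homology as a reflection in a $\pm 1$ class) forces the fixed-point set of $\sigma_1$ to contain exactly one copy of $\mathbb{RP}^2$ among spheres and isolated points; the group $\langle \sigma_2,\sigma_3,\sigma_4\rangle \cong \mathbb{Z}_2^3$ must preserve this $\mathbb{RP}^2$ and acts on it effectively (if $f$ fixes $\mathbb{RP}^2$ pointwise, then $f$ or $\sigma_1 f$ also acts trivially on its normal bundle and hence is the identity, by finiteness of order); but any smooth finite group action on $\mathbb{RP}^2$ is conjugate into $PO(3)\cong SO(3)$, whose elementary abelian $2$-subgroups have rank at most $2$, a contradiction with $|\mathbb{Z}_2^3| = 8$. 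Without this (or some substitute obstruction valid for all $X'$, including definite and positive-scalar-curvature cases), your proposal does not prove the theorem.
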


In particular, Nielsen realisation fails for $n \mathbb{CP}^2$ for $n \ge 4$. As far as we are aware, these are the first examples of definite, simply-connected $4$-manifolds where Nielsen realisation fails.

Our last main result concerns a certain extension of $M(X)$. Let $X^{(1)}$ be obtained from $X$ by removing an open ball and let $Diff(X^{(1)} , \partial X^{(1)})$ denote the group of diffeomorphisms of $X^{(1)}$ which are the identity in a neighbourhood of the boundary. Let $M_1(X) = \pi_0( Diff( X^{(1)} , \partial X^{(1)} ))$ denote the group of components of $Diff( X^{(1)} , \partial X^{(1)})$. It is known that the map $M_1(X) \to M(X)$ is surjective and that the kernel (which is either trivial or has order $2$) is generated by a Dehn twist on the boundary (see Section \ref{sec:bdt} for more details).

In general it is difficult to determine whether the kernel of $M_1(X) \to M(X)$ is trivial or non-trivial, or equivalently, whether the boundary Dehn twist is trivial or non-trivial. The extension is known to be trivial when $X$ is a connected sum of copies of $S^2 \times S^2$. In contrast we have:

\begin{theorem}\label{thm:phi0}
Let $X'$ be a compact, smooth, simply-connected $4$-manifold which is homeomorphic to $K3$. Let $X$ be $X'$ or $X' \# (S^2 \times S^2)$. Then the boundary Dehn twist is non-trivial. Moreover, the extension $1 \to \mathbb{Z}_2 \to M_1(X) \to M(X) \to 1$ does not split.
\end{theorem}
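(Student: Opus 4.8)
The plan is to detect the non-triviality of the boundary Dehn twist via a gauge-theoretic invariant of families, using the fact that $K3$ has nonzero Seiberg--Witten invariants. Let me outline the approach.

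First I would reduce the question to the case $X = K3$ itself. The boundary Dehn twist $\delta$ lives in $M_1(X)$ for $X = X'$ or $X' \# (S^2 \times S^2)$, and since $X'$ is homeomorphic to $K3$ but not assumed diffeomorphic, I cannot literally identify the smooth structures. However, the boundary Dehn twist is supported in a collar of the removed ball, so its behavior is governed by the local structure near the boundary sphere, and the key input is the existence of a diffeomorphism-invariant invariant that is sensitive to $\delta$. The natural tool here is the family (parametrized) Seiberg--Witten or Bauer--Furuta invariant: a loop of diffeomorphisms in $Diff(X^{(1)}, \partial X^{(1)})$ representing $\delta$ gives rise to a mapping torus, and one computes a relative family invariant over this $S^1$-family.

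The main technical step is to show that the boundary Dehn twist $\delta$ is non-trivial in $\pi_0 Diff(X^{(1)}, \partial X^{(1)})$, equivalently that it is not isotopic rel boundary to the identity. I would do this by showing that $\delta$ acts non-trivially on some invariant of families over $X^{(1)}$. Concretely, the boundary Dehn twist is known to be detected by the difference between the $Spin^c$ families invariants associated to a generator of $\pi_1 SO(3)$ or $\pi_1 SO(4)$ acting on the normal data of the boundary sphere; for a manifold with $b_+ \ge 3$ and nonzero ordinary $SW$ invariant (such as $K3$, which has $b_+ = 3$), a gluing/wall-crossing computation shows this difference is the mod $2$ reduction of the ordinary Seiberg--Witten invariant, which is nonzero for $K3$. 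This is where I expect the real work to be: setting up the relative invariant over the disk bundle, performing the gluing formula that expresses the family invariant of the mapping torus of $\delta$ in terms of the closed $SW$ invariant of $X$, and verifying the relevant moduli space is cut out transversally so the count is well-defined.

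Finally, to upgrade non-triviality of $\delta$ to non-splitting of the extension $1 \to \mathbb{Z}_2 \to M_1(X) \to M(X) \to 1$, I would argue that a splitting $s: M(X) \to M_1(X)$ would have to carry certain specific elements of $M(X)$ to lifts whose behavior is incompatible with the invariant just computed. The cleanest route is to exhibit an element $\phi \in M(X)$ such that any lift $\tilde\phi \in M_1(X)$ satisfies $\tilde\phi^2 = \delta$ (i.e. the lift has order $4$ rather than order $2$), so no order-preserving section can exist; this is analogous to the obstruction arising from a square root of the Dehn twist. The hard part here is identifying such a $\phi$ and computing the family invariant of $\tilde\phi^2$; I expect to use a diffeomorphism coming from a geometric involution (or a $(-1)$-sphere reflection) whose square lifts to the boundary twist, with the family $SW$ count again providing the obstruction to triviality of $\tilde\phi^2$.
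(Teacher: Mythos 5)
Your overall strategy (families Seiberg--Witten detection of the boundary twist, then an element of $M(X)$ whose lifts square to the twist) points in the right direction for $X$ homeomorphic to $K3$ itself, but it has a genuine gap in the stabilized case, which is exactly where the difficulty lies. Your opening move --- ``reduce to the case $X = K3$ because the boundary Dehn twist is supported in a collar, so its behavior is governed by the local structure near the boundary sphere'' --- is false as a principle: triviality of the boundary twist is a global property of the manifold, not of the collar. Indeed the boundary twist \emph{is} trivial for connected sums of copies of $S^2 \times S^2$, so non-triviality can die under stabilization, and whether it survives a single stabilization of $K3$ is precisely the hard question. Moreover, your proposed gluing computation (``family invariant of the mapping torus of $\delta$ equals the mod $2$ ordinary $SW$ invariant'') returns zero for $X' \# (S^2 \times S^2)$, because the ordinary Seiberg--Witten invariants vanish for any connected sum in which both summands have $b_+ > 0$. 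The paper handles $X' \# (S^2 \times S^2)$ by invoking Lin's theorem on the Dehn twist of $K3 \# K3$ after a single stabilization, a substantially harder input (refined $Pin(2)$-equivariant families techniques) that your plan does not supply a substitute for. For $X'$ itself the paper's route is also slightly different from yours: it does not compute a relative invariant of the mapping torus of $\delta$, but shows the connecting map $\phi : \pi_1(Diff(X)) \to \mathbb{Z}_2$ vanishes using $w_2(TE) = 0$ for families over $S^2$ (from the families Bauer--Furuta results of Baraglia--Konno, via Kronheimer--Mrowka), with Morgan--Szab\'o's oddness of the mod $2$ $SW$ invariant covering arbitrary homotopy $K3$'s --- a point your sketch also needs but does not address, since $X'$ is only assumed homeomorphic to $K3$.

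On the non-splitting step, your idea of exhibiting $\phi \in M(X)$ whose every lift $\tilde\phi$ satisfies $\tilde\phi^2 = \delta$ is exactly the paper's mechanism, but your proposed verification by another families $SW$ count of $\tilde\phi^2$ is both unnecessary and (again) broken after stabilization. The efficient route, once $\delta \neq 1$ is known, is purely algebraic: the extension $1 \to \mathbb{Z}_2 \to M_1(X) \to M(X) \to 1$ is isomorphic to $1 \to \mathbb{Z}_2 \to \pi_0(SpinDiff(X)) \to M(X) \to 1$, so any \emph{odd} involution $f$ (one whose lifts to the spin frame bundle are not involutions) automatically has $\xi_X(f) \neq 0$; both $K3$ and $K3 \# (S^2 \times S^2)$ admit odd involutions, and no further gauge theory is needed. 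Finally, your parenthetical suggestion of a ``$(-1)$-sphere reflection'' cannot work here: $X$ is spin, so its intersection form is even and it contains no $(-1)$-spheres.
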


If $M_1(X) \to M(X)$ is a non-trivial extension, then it is given by an extension class $\xi_X \in H^2( M(X) ; \mathbb{Z}_2)$ and the above theorem says that $\xi_X \neq 0$ when $X$ is of the stated form. Our final result completely determines $\xi_X$ in the case that $X$ is homeomorphic to $K3$. Let $L_X$ be the intersection lattice of $X$ and $Aut(L_X)$ the group of automorphisms. Over the classifying space $BAut(L_X)$ we have the tautological flat bundle $H = EAut(L_X) \times_{Aut(L_X)} L_X$. Let $H^+ \to BAut(L_X)$ be a maximal positive subbundle. This defines a characteristic class $w_2(H^+) \in H^2( Aut(L_X) ; \mathbb{Z}_2)$.

\begin{theorem}\label{thm:pull}
Let $X$ be a smooth $4$-manifold which is homeomorphic to $K3$. Then the extension class $\xi_X \in H^2( M(X) ; \mathbb{Z}_2)$ is the pullback of $w_2(H^+) \in H^2( Aut(L_X) ; \mathbb{Z}_2 )$ under the map $M(X) \to Aut(L_X)$.
\end{theorem}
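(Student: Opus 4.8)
The plan is to avoid computing $\xi_X$ by brute force and instead to \emph{identify} the extension $1 \to \mathbb{Z}_2 \to M_1(X) \to M(X) \to 1$ with a $\mathbb{Z}_2$-extension built from spin structures, whose class can then be read off as $w_2$ of a self-dual bundle. Throughout I use that $X$, being homeomorphic to $K3$, is spin with $H^1(X;\mathbb{Z}_2)=0$ and $b_+(X)=3$; in particular $X$ carries a \emph{unique} spin structure, so every orientation preserving diffeomorphism preserves it. Since $\mathbb{Z}_2$ is a field it is enough to compare the two classes after passing to $BM(X)=K(M(X),1)$, where $p^*w_2(H^+)=w_2(\mathcal{H}^+)$ with $\mathcal{H}^+$ the flat rank $3$ bundle associated to $M(X)\to \Gamma(X)\subseteq Aut(L_X)$ acting on a maximal positive subspace of $L_X\otimes\mathbb{R}$. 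Note that on $\Gamma(X)$ the orientation of $H^+$ is preserved, so $w_1(\mathcal{H}^+)=0$ and $w_2(\mathcal{H}^+)$ is a genuine spin obstruction.

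First I would make precise the framing description of the boundary Dehn twist: removing a ball identifies $M_1(X)$ with $\pi_0 Diff(X,D^4)$, and the fibration $Diff(X,D^4)\to Diff(X)\to Fr^+(X)$ realises the kernel of $M_1(X)\to M(X)$ as the image of $\pi_1(Fr^+(X))$. Because $X$ is simply-connected and spin, $\pi_1(Fr^+(X))=\pi_1(SO(4))=\mathbb{Z}_2$, and the boundary Dehn twist is the image of its generator, i.e.\ of a loop of rotations supported near the removed ball. I would then introduce the spin mapping class group: let $Diff^{spin}(X)$ be the group of pairs consisting of a diffeomorphism together with a lift of its action to the $Spin(4)$-frame bundle. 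Uniqueness of the spin structure makes $Diff^{spin}(X)\to Diff(X)$ a $\mathbb{Z}_2$-extension of topological groups, the central $\mathbb{Z}_2$ being the constant gauge transformation $-1$. The geometric heart of the matching is the belt-trick fact that the generator of $\pi_1(SO(4))$ lifts to a path from $1$ to $-1$ in $Spin(4)$; equivalently, a $2\pi$-rotation acts as $-1$ on spinors. This should identify the framing $\mathbb{Z}_2$ with the spin $\mathbb{Z}_2$ and hence exhibit $\pi_0 Diff^{spin}(X)\to M(X)$ as isomorphic to $M_1(X)\to M(X)$, so that $\xi_X$ is the classifying class of the spin extension.

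Finally I would convert this class into $w_2(\mathcal{H}^+)$ using $Spin(4)=SU(2)_+\times SU(2)_-$. The fibrewise bundle of self-dual $2$-forms $\Lambda^+$ is the adjoint bundle of the $SU(2)_+$-factor, carrying structure group $SO(3)_+=SU(2)_+/\{\pm1\}$; a spin lift of a family is exactly a lift of this self-dual structure to $SU(2)_+=Spin(3)$, whose obstruction is $w_2$ of the self-dual bundle. Identifying, by Hodge theory in families, the fibrewise self-dual \emph{harmonic} forms with the cohomological bundle $\mathcal{H}^+$, the $\mathbb{Z}_2$ recorded by the $SU(2)_+$-factor becomes precisely $w_2(\mathcal{H}^+)=p^*w_2(H^+)$, which gives the theorem.

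The main obstacle is twofold. The first point is to make the identification of the two $\mathbb{Z}_2$'s rigorous: I must check not merely that both extensions are non-trivial (this is Theorem~\ref{thm:phi0}) but that the generator of $\pi_1(SO(4))$ producing the boundary Dehn twist really corresponds to $-1\in Spin(4)$ as an \emph{element of the two extensions}, uniformly over $M(X)$. The second and harder point is the passage from the fibrewise self-dual spin obstruction, which a priori lives on the total space of the universal family and is built from the rank $3$ bundle $\Lambda^+ T(\mathcal{E}/B)$, to the base class $w_2(\mathcal{H}^+)$ built from the rank $b_+$ cohomological bundle. These are a priori different bundles, and relating their $w_2$ requires the family index theorem together with the Serre spectral sequence of the universal family; here $H^1(K3)=0$ is essential, since it forces the relevant class to be pulled back from the base. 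I expect this second step to be where the real work lies.
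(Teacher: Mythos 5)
Your first half agrees with the paper's setup and is sound: $\xi_X$ is the class of the extension $1 \to \mathbb{Z}_2 \to \pi_0(SpinDiff(X)) \to M(X) \to 1$, so for a smooth family $E \to B$ over a compact surface realising a given $\rho : \pi_1(B) \to M(X)$, the pullback of $\xi_X$ is the obstruction to lifting the structure group to $SpinDiff(X)$, i.e.\ the class $\alpha \in H^2(B;\mathbb{Z}_2)$ with $\pi^*\alpha = w_2(TE/B)$ (here $H^1(X;\mathbb{Z}_2)=0$ makes $\pi^*$ injective and forces $w_2(TE/B)$ into its image, as you note). Your observation that a spin lift is equivalent to an $SU(2)_+$-lift of $\Lambda^+(TE/B)$ is also correct, since $Spin(4) \cong SO(4) \times_{SO(3)_+} SU(2)_+$. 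The genuine gap is exactly the step you defer to ``the family index theorem together with the Serre spectral sequence'': those tools cannot relate this class to $w_2(\mathcal{H}^+)$. The bundle $\Lambda^+ T^*(E/B)$ lives on $E$, while $\mathcal{H}^+$ lives on $B$; fibrewise Hodge theory identifies $\mathcal{H}^+$ with a subbundle of the infinite-rank bundle $\pi_*\Lambda^+$ of self-dual forms, \emph{not} with $\Lambda^+$ itself, and the natural evaluation map $\pi^*\mathcal{H}^+ \to \Lambda^+T^*(E/B)$ is an isomorphism only for very special metrics (e.g.\ hyperk\"ahler ones, which cannot be arranged for arbitrary monodromy in $M(X)$, let alone for exotic homotopy $K3$'s). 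Moreover the families index theorem outputs only fibre integrals of characteristic classes of $TE/B$ and $c_1(\mathfrak{s}_{E/B})$; for a $K3$ fibre it yields precisely $c_1(ind(D)) \equiv w_2(TE/B) \; ({\rm mod} \; 2)$, i.e.\ Proposition \ref{prop:w2}, an identity in which $\mathcal{H}^+$ never appears.

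The missing ingredient is Seiberg--Witten theoretic and cannot be replaced by formal arguments. The paper's Proposition \ref{prop:w2+} closes the loop by combining the index computation above with \cite[Corollary 1.3]{bk2}, a families Bauer--Furuta/Seiberg--Witten constraint asserting that when the fibrewise mod $2$ Seiberg--Witten invariant of the spin structure is nonzero one has $c_1(ind(D)) \equiv w_2(H^+) \; ({\rm mod} \; 2)$; the required nonvanishing is supplied by Morgan--Szab\'o \cite{ms} (the spin Seiberg--Witten invariant of any homotopy $K3$ is odd), which is also what allows the theorem to cover $X$ merely \emph{homeomorphic} to $K3$. Without this input the equality $w_2(TE/B) = w_2(H^+)$ is not a consequence of anything you have assembled: for instance, for spin fibres with $\sigma \equiv 0 \; ({\rm mod} \; 32)$ the same index computation gives $c_1(ind(D)) \equiv 0$, and no relation between the spin-lifting obstruction and $w_2(H^+)$ follows. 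So your plan correctly reduces the theorem to comparing the two rank-$3$ bundles, and you correctly flag this comparison as the real work, but the tools you propose for it are inadequate; the families Seiberg--Witten input is where the theorem actually gets proved.
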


\subsection{Structure of the paper}

The structure of the paper is as follows. In Section \ref{sec:sw1} we review the Seiberg--Witten invariants for the Torelli group (as in \cite{rub1,rub2,bk1}) and show how these invariants can be assembled into cohomology classes on the mapping class group. In Section \ref{sec:sw2} we use these cohomology classes to show that $M(X)$ is not finitely generated for certain $X$. In Section \ref{sec:split} we construct a splitting $\Gamma(X) \to M(X)$ when $X$ is a connected sum of copies of $\mathbb{CP}^2$. In Section \ref{sec:nsplit} we prove the non-existence of splittings $\Gamma(X) \to M(X)$ for certain $4$-manifolds. The proof uses families Seiberg--Witten theory and more specifically the main result of \cite{bar1}. In Section \ref{sec:nielsen} we prove Theorem \ref{thm:n}. Finally, in Section \ref{sec:bdt} we study boundary Dehn twists and the extension $M_1(X) \to M(X)$ and we prove Theorems \ref{thm:phi0} and \ref{thm:pull}.

\subsection{Acknowledgements}

I thank Hokuto Konno for sending me his paper \cite{kon} and also for comments on a draft of this paper. I also thank Alexander Kupers for clarifying the finite generation of mapping class groups for simply-connected $5$-manifolds.

\section{Seiberg--Witten invariants for the mapping class group}\label{sec:sw1}

In this section we define Seiberg--Witten invariants for the mapping class group, extending the Seiberg--Witten invariants on the Torelli group which have previously been considered in \cite{rub1,rub2,bk1}. These invariants will be used to show that certain simply-connected $4$-manifolds have non-finitely generated mapping class group.

Let $X$ be a compact, smooth, simply connected $4$-manifold and let $\mathfrak{s}$ be a spin$^c$-structure with $d(\mathfrak{s}) = -1$, where
\[
d(\mathfrak{s}) = \frac{ c(\mathfrak{s})^2 - \sigma(X) }{4} - b_+(X) - 1
\]
is the expected dimension of the Seiberg--Witten moduli space for $\mathfrak{s}$. Let $\mathcal{S}(X)$ denote the set of all isomorphism classes of spin$^c$-structures on $X$ for which $d(\mathfrak{s}) = -1$. Since $X$ is assumed to be simply connected, $\mathcal{S}(X)$ can be identified with the set of characteristic elements $c \in L = H^2(X ; \mathbb{Z})$ for which $( c^2 - \sigma(X))/4 - b_+(X) = 0$.

Let $\Pi$ denote the space of pairs $(g , \eta)$ where $g$ is a Riemannian metric on $X$ and $\eta$ is a $2$-form which is self-dual with respect to $g$. For any $h \in \Pi$ and any $\mathfrak{s} \in \mathcal{S}(X)$ we may consider the Seiberg--Witten equations on $X$ with respect to the metric $g$, spin$^c$-structure $\mathfrak{s}$ and $2$-form perturbation $\eta$. Let $\mathcal{M}(X,\mathfrak{s},h)$ denote the moduli space of gauge equivalence classes of solutions to the Seiberg--Witten equations for $(X,\mathfrak{s},h)$. Assume $b_+(X) > 2$. We will say that $h \in \Pi$ is regular if $\mathcal{M}(X , \mathfrak{s},h)$ is empty for all $\mathfrak{s} \in S(X)$. Since $b_+(X) > 0$ and the expected dimension of $\mathcal{M}(X,\mathfrak{s},h)$ is negative, the regular elements form a subset of $\Pi$ of Baire second category with respect to the $\mathcal{C}^\infty$ topology. Let $\Pi^{reg} \subseteq \Pi$ denote the set of regular elements.

Suppose that $h_0, h_1 \in \Pi^{reg}$. If $h : [0,1] \to \Pi$ is a path in $\Pi$ from $h_0$ to $h_1$, we can consider the families moduli space, which is the union over $t \in [0,1]$ of the Seiberg--Witten moduli spaces for each $h_t \in \Pi$. For a sufficiently generic path $h_t$, the moduli space is a compact, smooth, $0$-dimensional manifold. A choice of orientation on a maximal positive definite subspace of $H^2(X ; \mathbb{R})$ determines an orientation on the moduli space and hence we can count with sign the number of points in the moduli space. Fix a choice of such an orientation.  It can be shown \cite{rub1} that the number of solutions depends on the endpoints $h_0, h_1$, but not on the choice of generic path $h_t$. Hence we may denote by $SW_{\mathfrak{s}}( h_0 , h_1) \in \mathbb{Z}$ the signed count of points in the moduli space. From the definition it is clear that this count of points satisfies the following properties:
\begin{itemize}
\item[(1)]{$SW_{\mathfrak{s}}( h_0 , h_1) + SW_{\mathfrak{s}}(h_1 , h_2) = SW_{\mathfrak{s}}(h_0 , h_2)$,}
\item[(2)]{$SW_{\mathfrak{s}}(h_0 , h_1 ) = sgn_+(f) SW_{f(\mathfrak{s})}( f(h_0) , f(h_1) )$ for any orientation preserving diffeomorphism $f$.}
\end{itemize}

In (2), $sgn_+(f)$ is defined as follows. The space of oriented, maximal positive definite subspaces of $H^2(X ; \mathbb{R})$ has two connected components. For an isometry $\varphi$ of $H^2(X ; \mathbb{R})$ we let $sgn_+(\varphi)$ equal $1$ or $-1$ according to whether $\varphi$ preserves or exchanges the two components. If $f$ is an orientation preserving diffeomorphism of $X$, then $sgn_+(f)$ denotes $sgn_+(f_*)$, where $f_* = (f^{-1})^*$ is the isometry of $H^2(X ; \mathbb{R})$ induced by $f$.

Property (1) follows by concatenating a path from $h_0$ to $h_1$ with a path from $h_1$ to $h_2$. Property (2) follows from diffeomorphism invariance of the Seiberg--Witten equations. In addition, $SW_{\mathfrak{s}}(h_0,h_1)$ obeys a symmetry with respect to charge conjugation:
\begin{itemize}
\item[(3)]{$SW_{\mathfrak{s}}(h_0 , h_1) = (-1)^{b_+(X)/2+1} SW_{\bar{\mathfrak{s}} }( \bar{h}_0 , \bar{h}_1)$,}
\end{itemize}
where $\bar{\mathfrak{s}}$ denotes the charge conjugate of $\mathfrak{s}$ and for $h = (g,\eta) \in \Pi$, we set $\bar{h} = (g , -\eta)$. Property (3) is an immediate consequence of the charge conjugation symmetry of the Seiberg--Witten equations.

Let $f \in T(X)$ be an element of the Torelli group. Fix a spin$^c$-structure $\mathfrak{s}$ with $d(\mathfrak{s}) = -1$. The mapping cylinder of $f$ defines a smooth family $E \to S^1$ over $S^1$ with fibres diffeomorphic to $X$. Since $f$ acts trivially on cohomology, it preserves the isomorphism class of $\mathfrak{s}$. It follows easily that there is a unique spin$^c$-structure on the vertical tangent bundle of $E$ which restricts to $\mathfrak{s}$ on each fibre. Since $b_+(X) > 2$, there is a single chamber for the families Seiberg--Witten equations for $E$. Furthermore, the families moduli space is oriented and so we obtain an integer-valued invariant $SW_{\mathfrak{s}}(f) \in \mathbb{Z}$ which depends only on $(X , \mathfrak{s})$ and the isotopy class of $f$ (see \cite{bk1} for more details). From the definition of $SW_{\mathfrak{s}}(f)$, it is easy to see that
\[
SW_{\mathfrak{s}}(f) = SW_{\mathfrak{s}}( h , f(h) )
\]
for any $h \in \Pi^{reg}$. It is instructive to see why $SW_{\mathfrak{s}}(f)$ is independent of the choice of $h \in \Pi^{reg}$. Let $h' \in \Pi^{reg}$. Then
\begin{align*}
SW_{\mathfrak{s}}( h' , f(h') ) &= -SW_{\mathfrak{s}}( h , h') + SW_{\mathfrak{s}}(h , f(h) ) + SW_{\mathfrak{s}}( f(h) , f(h') ) \\
&= -SW_{\mathfrak{s}}(h,h') + SW_{\mathfrak{s}}(h,f(h)) + SW_{f^{-1}(\mathfrak{s})}( h , h') \\
&= SW_{\mathfrak{s}}(h , f(h)),
\end{align*}
where the last line follows from $f^{-1}(\mathfrak{s}) = \mathfrak{s}$, which holds since $f \in T(X)$.

For $f,g \in T(X)$, we have that $SW_{\mathfrak{s}}(f \circ g ) = SW_{\mathfrak{s}}(f) + SW_{\mathfrak{s}}(g)$ (this is a special case of Proposition \ref{prop:cocyc}, proven below). Therefore $SW_{\mathfrak{s}}$ defines a homomorphism
\[
SW_{\mathfrak{s}} : T(X) \to \mathbb{Z}
\]
or equivalently, a cohomology class $SW_{\mathfrak{s}} \in H^1( T(X) ; \mathbb{Z})$. These cohomology classes generally do not extend to the full mapping class group $M(X)$, because $\Gamma(X)$ acts non-trivially on the set of spin$^c$-structures.

Recall that the compactness of the Seiberg--Witten moduli space follows from a priori bounds. These bounds depend on the pair $h \in \Pi$, but not on the spin$^c$-structure. This argument also works for families over a compact base space, hence for fixed $f \in T(X)$, $SW_{\mathfrak{s}}(f)$ is non-zero for only finitely many $\mathfrak{s} \in S(X)$. Therefore, we can collect the homomorphisms $SW_{\mathfrak{s}}$ into a single invariant
\begin{align*}
SW : T(X) &\to \bigoplus_{\mathfrak{s} \in S(X)} \mathbb{Z} \\ f &\mapsto \bigoplus_{\mathfrak{s}} SW_{\mathfrak{s}}(f).
\end{align*}

In what follows, we will see that $SW$ can be extended from $T(X)$ to the full mapping class group $M(X)$ as a cohomology class valued in a certain $\Gamma(X)$-module.

Recall that each $\mathfrak{s} \in S(X)$ is determined by the corresponding characteristic element $c(\mathfrak{s}) \in L$. Therefore the group $\Gamma(X)$ acts on $\mathcal{S}(X)$ and hence on $\mathbb{Z}[\mathcal{S}(X)]$, the free abelian group with basis $\mathcal{S}(X)$. Let $\widehat{\mathbb{Z}}$ denote $\mathbb{Z}$ equipped with the action of $\Gamma(X)$ such that $f \in \Gamma(X)$ acts as multiplication by $sgn_+(f)$. Let $\widehat{\mathbb{Z}}[\mathcal{S}(X)] = \widehat{\mathbb{Z}} \otimes_{\mathbb{Z}} \mathbb{Z}[\mathcal{S}(X)]$. It will be convenient to regard $\widehat{\mathbb{Z}}[\mathcal{S}(X)]$ as the group of functions $\phi : \mathcal{S}(X) \to \mathbb{Z}$ with finite support. Then the action of $f \in \Gamma(X)$ is given by $(f\phi)(\mathfrak{s}) = sgn_+(f) \phi( f^{-1}(\mathfrak{s}))$. We will show that the families Seiberg--Witten invariant for $1$-dimensional families (where $b_+(X) > 2$) can be viewed as an element of $H^1( M(X) ; \widehat{\mathbb{Z}}[\mathcal{S}(X)])$. 

Recall that for a group $G$ and a $G$-module $M$, the group $H^1( G ; M )$ can be viewed as the set of equivalence classes of twisted homomomorphisms $G \to M$. A twisted homomorphism is a map $\phi : G \to M$ such that $\phi(gh) = \phi(g) + g \phi(h)$. A trivial twisted homomorphism is a twisted homomorphism of the form $\phi(g) = gm - m$ for some $m \in M$. Two twisted homomorphisms are considered equivalent if they differ by a trivial twisted homomorphism.

Let $h \in \Pi^{reg}$. Define a map $\phi_h : Diff_+(X) \to \widehat{\mathbb{Z}}[\mathcal{S}(X)]$ from the group of orientation preserving diffeomorphisms to $\widehat{\mathbb{Z}}[\mathcal{S}(X)]$ by setting
\[
(\phi_h(f))(\mathfrak{s}) = SW_{\mathfrak{s}}( h , f(h)).
\]

Suppose that $f_0,f_1 \in Diff_+(X)$ are isotopic. Choose an isotopy $f_t$. Then 
\begin{align*}
\phi_h(f_1) &= SW_{\mathfrak{s}}( h , f_1(h))\\
& = SW_{\mathfrak{s}}(h , f_0(h)) + SW_{\mathfrak{s}}( f_0(h) , f_1(h) ) \\
&= \phi_h(f_0) + SW_{\mathfrak{s}}( f_0(h) , f_1(h) ).
\end{align*}

Consider the path $h_t = f_t(h)$ from $f_0(h)$ to $f_1(h)$. By diffeomorphism invariance of the Seiberg--Witten equations, the Seiberg--Witten moduli space for $h_t$ is empty for each $t \in [0,1]$, hence $SW_{\mathfrak{s}}( f_0(h) , f_1(h) ) = 0$ and $\phi_h(f_1) = \phi_h(f_2)$. This shows that $\phi$ only depends on the underlying isotopy class and so we may view it as a map $\phi_h : M(X) \to \widehat{\mathbb{Z}}[\mathcal{S}(X)]$.

\begin{proposition}\label{prop:cocyc}
The map $\phi_h : M(X) \to \widehat{\mathbb{Z}}[\mathcal{S}(X)]$ is a twisted homomorphism. Furthermore, the underlying cohomology class $[\phi_h] \in H^1( M(X) ; \widehat{\mathbb{Z}}[\mathcal{S}(X)])$ does not depend on the choice of $h \in \Pi^{reg}$.
\end{proposition}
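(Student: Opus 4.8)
The plan is to verify the twisted homomorphism property directly from the cocycle relation (1) and the equivariance relation (2), and then to show independence of $h$ by exhibiting the difference of two such maps as a trivial twisted homomorphism (i.e.\ a coboundary).

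\emph{Twisted homomorphism property.} Let $f, g \in Diff_+(X)$. I would compute $(\phi_h(fg))(\mathfrak{s}) = SW_{\mathfrak{s}}(h, (fg)(h))$ and split the path from $h$ to $fg(h)$ through the intermediate point $g(h)$ using property (1):
\begin{align*}
SW_{\mathfrak{s}}(h, fg(h)) &= SW_{\mathfrak{s}}(h, g(h)) + SW_{\mathfrak{s}}(g(h), fg(h)).
\end{align*}
The first term is exactly $(\phi_h(g))(\mathfrak{s})$. For the second term I would apply property (2) with the diffeomorphism $g$, writing $SW_{\mathfrak{s}}(g(h), fg(h)) = sgn_+(g)\, SW_{g^{-1}(\mathfrak{s})}(h, (g^{-1}fg)(h))$. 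Here I must be careful that property (2) relates $SW_{\mathfrak{s}}(g(h_0), g(h_1))$ to $SW_{g^{-1}(\mathfrak{s})}(h_0, h_1)$, so the natural move is to recognise $fg(h) = g\big((g^{-1}fg)(h)\big)$ and feed the endpoints $h_0 = h$, $h_1 = (g^{-1}fg)(h)$ into (2). The resulting term should assemble, after comparing with the definition of the $\Gamma(X)$-action $(g\phi)(\mathfrak{s}) = sgn_+(g)\,\phi(g^{-1}(\mathfrak{s}))$, into $(g \cdot \phi_h(f))(\mathfrak{s})$. The main subtlety I anticipate is bookkeeping the direction of the action: since $f$ acts on cohomology via $(f^{-1})^*$ and on spin$^c$-structures through the corresponding characteristic elements, I need to confirm that the conjugate $g^{-1}fg$ appearing above is genuinely irrelevant, i.e.\ that $SW_{g^{-1}(\mathfrak{s})}(h, (g^{-1}fg)(h))$ can be rewritten purely in terms of $f$ and $\mathfrak{s}$. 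This is the step where care is most needed, and I would resolve it by tracking how $sgn_+$ is multiplicative and how the spin$^c$-label transforms under the inner conjugation, so that the identity collapses to $\phi_h(fg) = \phi_h(g) + g \cdot \phi_h(f)$, matching the twisted cocycle condition.

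\emph{Independence of $h$.} Given $h, h' \in \Pi^{reg}$, I would show $\phi_{h'} - \phi_h$ is a coboundary. Using property (1) twice,
\begin{align*}
(\phi_{h'}(f))(\mathfrak{s}) - (\phi_h(f))(\mathfrak{s}) &= SW_{\mathfrak{s}}(h', f(h')) - SW_{\mathfrak{s}}(h, f(h)).
\end{align*}
Inserting the chain $h' \to h \to f(h) \to f(h')$ and applying (1), the right-hand side becomes $-SW_{\mathfrak{s}}(h,h') + SW_{\mathfrak{s}}(f(h), f(h'))$. Applying property (2) to the second term gives $SW_{\mathfrak{s}}(f(h), f(h')) = sgn_+(f)\,SW_{f^{-1}(\mathfrak{s})}(h, h')$. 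Defining $m \in \widehat{\mathbb{Z}}[\mathcal{S}(X)]$ by $m(\mathfrak{s}) = SW_{\mathfrak{s}}(h, h')$, the difference reads $(f \cdot m)(\mathfrak{s}) - m(\mathfrak{s})$, which is precisely a trivial twisted homomorphism $f \mapsto f\cdot m - m$. Hence $[\phi_{h'}] = [\phi_h]$ in $H^1(M(X); \widehat{\mathbb{Z}}[\mathcal{S}(X)])$. I should note that $m$ has finite support, so that it genuinely lies in $\widehat{\mathbb{Z}}[\mathcal{S}(X)]$; this follows from the same a priori bounds argument already invoked in the text, applied to the path from $h$ to $h'$.

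The step I expect to be the genuine obstacle is the first one: correctly threading the conjugation and the $sgn_+$ factor through property (2) so that the output matches the module action as defined. The independence argument is then essentially a formal repetition of the computation already displayed in the text for $SW_{\mathfrak{s}}(f)$ on the Torelli group, now carried out without assuming $f^{-1}(\mathfrak{s}) = \mathfrak{s}$.
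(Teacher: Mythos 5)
Your independence-of-$h$ argument is correct and is essentially word-for-word the paper's own (including the worthwhile remark that $m(\mathfrak{s}) = SW_{\mathfrak{s}}(h,h')$ has finite support, which the paper leaves implicit). The genuine gap is in the twisted-homomorphism step, precisely at the point you flagged: the conjugation cannot be "threaded away", because it is not a bookkeeping issue but a real discrepancy. Having split the path from $h$ to $fg(h)$ at $g(h)$ (the image of $h$ under the \emph{inner}, first-applied diffeomorphism), property (2) gives you
\[
\phi_h(fg)(\mathfrak{s}) = \phi_h(g)(\mathfrak{s}) + sgn_+(g)\, \phi_h\bigl(g^{-1}fg\bigr)\bigl(g^{-1}\mathfrak{s}\bigr),
\]
and there is no way to replace $\phi_h(g^{-1}fg)$ by $\phi_h(f)$: these are invariants of two different mapping classes. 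Concretely, if your hoped-for conclusion $\phi_h(fg) = \phi_h(g) + g\cdot \phi_h(f)$ held for all $f,g$, then together with the correct identity $\phi_h(fg) = \phi_h(f) + f\cdot \phi_h(g)$ it would force $\phi_h(f) - g\cdot\phi_h(f) = \phi_h(g) - f\cdot\phi_h(g)$ for all $f,g$. Taking $f = t_q \in T(X)$ and $g = f_0$ as in the proof of Theorem \ref{thm:nfg} (so $sgn_+(f_0) = -1$, $f_0$ fixes a type (1) spin$^c$-structure $\mathfrak{s}$, and $SW_{\mathfrak{s}}(t_q)$ is odd), the left-hand side evaluated at $\mathfrak{s}$ equals $2\, SW_{\mathfrak{s}}(t_q) \neq 0$, while the right-hand side vanishes there because $t_q$ acts trivially on $\mathcal{S}(X)$ and $sgn_+(t_q) = 1$. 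Note also that $\phi_h(fg) = \phi_h(g) + g\cdot\phi_h(f)$ is not the paper's twisted-homomorphism condition $\phi(ab) = \phi(a) + a\,\phi(b)$ in any case; it is the cocycle condition for the opposite group law, so even if you could establish it, it would not prove the proposition as stated.

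The repair is one line: split at the image of $h$ under the \emph{outer}, last-applied diffeomorphism instead. Writing $fg(h) = f(g(h))$,
\begin{align*}
SW_{\mathfrak{s}}(h, fg(h)) &= SW_{\mathfrak{s}}(h, f(h)) + SW_{\mathfrak{s}}\bigl(f(h), f(g(h))\bigr) \\
&= \phi_h(f)(\mathfrak{s}) + sgn_+(f)\, SW_{f^{-1}\mathfrak{s}}(h, g(h)) \\
&= \bigl(\phi_h(f) + f\cdot\phi_h(g)\bigr)(\mathfrak{s}),
\end{align*}
where the middle equality is property (2) applied to $f$ with endpoints $h$ and $g(h)$; no conjugation ever appears. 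This is exactly the paper's computation (written there for the product $gf$, splitting at $g(h)$, with $g$ the outer factor).
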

\begin{proof}
Let $f,g \in M(X)$. Then
\begin{align*}
\phi_h(gf)(\mathfrak{s}) &= SW_{\mathfrak{s}}( h , gf(h) ) \\
&= SW_{\mathfrak{s}}(h , g(h) ) + SW_{\mathfrak{s}}( g(h) , g(f(h)) ) \\
&= SW_{\mathfrak{s}}(h , g(h) ) + sgn_+(g) SW_{ g^{-1}\mathfrak{s}}( h , f(h) ) \\
&= \phi_h(g)(\mathfrak{s}) + sgn_+(g)\phi_h(f)( g^{-1}\mathfrak{s} ) \\
&= ( \phi_h(g) + (g\phi_h)(f) )(\mathfrak{s}).
\end{align*}
Hence $\phi_h$ is a twisted homomorphism. Next we show that the underlying cohomology class of $\phi_h$ does not depend on the choice of $h$. Let $h' \in \Pi^{reg}$ be another generic pair. Choose a path $h_t$ from $h = h_0$ to $h' = h_1$. Then
\begin{align*}
\phi_{h'}(f)(\mathfrak{s}) &= SW_{\mathfrak{s}}( h_1 , f(h_1) ) \\
&= SW_{\mathfrak{s}}( h_0 , f(h_0) ) - SW_{\mathfrak{s}}(h_0 , h_1) + SW_{\mathfrak{s}}( f(h_0) , f(h_1) ) \\
&= \phi_h(f)(\mathfrak{s}) + sgn_+(f) SW_{f^{-1}\mathfrak{s}}( h_0 , h_1 ) - SW_{\mathfrak{s}}(h_0 , h_1) \\
&= \phi_h(f)(\mathfrak{s}) + (fm-m)(\mathfrak{s})
\end{align*}
where $m(\mathfrak{s}) = SW_{\mathfrak{s}}( h_0 , h_1 )$. Hence $\phi_h$ and $\phi_{h'}$ define the same cohomology class.
\end{proof}

\begin{definition}
We denote by
\[
SW \in H^1( M(X) ; \widehat{\mathbb{Z}}[\mathcal{S}(X)] )
\]
the cohomology class $SW = [\phi_h]$ for any $h \in \Pi^{reg}$.
\end{definition}

Let $M_+(X)$ denote the subgroup of $M(X)$ consisting of all $f \in M(X)$ for which $sgn_+(f) = 1$. Then $M_+(X)$ has index $1$ or $2$ in $M(X)$. Observe that $\widehat{\mathbb{Z}}|_{M_+(X)} = \mathbb{Z}$ and thus $SW|_{M_+(X)} \in H^1( M(X) ; \mathbb{Z}[\mathcal{S}(X)])$. From $SW|_{M_+(X)}$ we can extract $\mathbb{Z}$-valued cohomology classes as follows: let $\mathcal{O} \subseteq \mathcal{S}(X)$ be a $\Gamma(X)$-invariant subset of $\mathcal{S}(X)$. Then we obtain a morphism $p_{\mathcal{O}} : \mathbb{Z}[\mathcal{S}(X)] \to \mathbb{Z}$ given by $\phi \mapsto \sum_{\mathfrak{s} \in \mathcal{O}} \phi(\mathfrak{s})$. We define $SW_{\mathcal{O}} \in H^1( M_+(X) ; \mathbb{Z})$ by setting $SW_{\mathcal{O}} = p_{\mathcal{O}}( SW|_{M_+(X)} )$. From this definition it follows that
\[
SW_{\mathcal{O}}|_{T(X)} = \sum_{\mathfrak{s} \in \mathcal{O}} SW_{\mathfrak{s}}.
\]

Furthermore, for any $f \in M_+(X)$, we have
\[
SW_{\mathcal{O}}(f) = \sum_{\mathfrak{s} \in \mathcal{O}} SW_{\mathfrak{s}}(h , f(h))
\]
where $h \in \Pi^{reg}$.

\begin{remark}
In \cite{rub2}, Ruberman defined an invariant $SW_{\mathfrak{s}}^{tot}$ which is similar to the definition of $SW_{\mathcal{O}}$ given above. Namely $SW_{\mathfrak{s}}^{tot} : M_+(X) \to \mathbb{Z}$ is given by $SW_{\mathfrak{s}}^{tot}(f) = \sum_{ \mathfrak{s}' } SW_{\mathfrak{s}'}( h , f(h) )$ where the sum is over all spin$^c$-structures $\mathfrak{s}'$ such that $\mathfrak{s}' = f^m(\mathfrak{s})$ for some $m \in \mathbb{Z}$. However this invariant is not a group homomorphism and behaves in a complicated manner with respect to composition (see \cite[Theorem 3.4]{rub2}). For this reason, we find it more useful to work with the invariants $SW_{\mathcal{O}}$.
\end{remark}

Let $\mathfrak{s} \in \mathcal{S}(X)$ be a spin$^c$-structure and let $f \in M(X)$. Suppose that $f$ preserves $\mathfrak{s}$. If $sgn_+(f) = 1$, then the families Seiberg---Witten moduli space for the mapping cylinder of $f$ with spin$^c$-structure $\mathfrak{s}$ is oriented, and we obtain an integer families Seiberg--Witten invariant $SW_{\mathfrak{s}}(f) \in \mathbb{Z}$. It is given by $SW_{\mathfrak{s}}(f) = SW_{\mathfrak{s}}( h , f(h) )$, for any $h \in \Pi^{reg}$. If $sgn_+(f) = -1$ then there is no natural choice of orientation on the families moduli space, hence we only get a mod $2$ invariant $SW_{\mathfrak{s}}(f) \in \mathbb{Z}_2$ which is given by $SW_{\mathfrak{s}}(f) = SW_{\mathfrak{s}}(h , f(h)) \; ({\rm mod} \; 2)$, for any $h \in \Pi^{reg}$ (the value of $SW_{\mathfrak{s}}(h,f(h))$ depends on $h$, but its mod $2$ reduction does not).

We will make use of the following special case of the gluing formula of \cite{bk1}:
\begin{proposition}\label{prop:glue}
Suppose that $X = X' \# (S^2 \times S^2)$, where $b_+(X') > 1$. Let $\mathfrak{s}'$ be a spin$^c$-structure on $X'$ with $d(\mathfrak{s}') = 0$ and let $\mathfrak{s}_0$ denote the spin structure on $S^2 \times S^2$. Let $\mathfrak{s} = \mathfrak{s}' \# \mathfrak{s}_0$. Let $f'$ be a diffeomorphism on $X'$ that preserves $\mathfrak{s}'$ and $\rho$ a diffeomorphism of $S^2 \times S^2$. Suppose that $f'$ is trivial in a neighbourhood of a point $x_1 \in X'$ and that $\rho$ is trivial in a neighbourhood of a point $x_2 \in S^2 \times S^2$. Set $f = f' \# \rho$, where the connected sum is performed by removing balls around $x_1$ and $x_2$ and identifying boundaries. Then:
\begin{itemize}
\item[(1)]{If $sgn_+(\rho) = 1$, then $SW_{\mathfrak{s}}(f) = 0 \; ({\rm mod} \; 2)$.}
\item[(2)]{If $sgn_+(\rho) = -1$, then $SW_{\mathfrak{s}}(f) = SW(X' , \mathfrak{s}') \; ({\rm mod} \; 2)$, where $SW(X' , \mathfrak{s}')$ denotes the ordinary Seiberg--Witten invariant of $(X' , \mathfrak{s}')$.}
\end{itemize}
\end{proposition}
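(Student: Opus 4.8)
The plan is to realise $SW_{\mathfrak{s}}(f)$ as a $1$-parameter families invariant and then specialise the gluing formula of \cite{bk1}. First I would describe the relevant family: the mapping torus $E \to S^1$ of $f = f' \# \rho$. Since $f'$ is trivial near $x_1 \in X'$ and $\rho$ is trivial near $x_2 \in S^2 \times S^2$, the connected sum can be performed along a section, so that $E$ is the fibrewise connected sum of the mapping torus $E' \to S^1$ of $f'$ (fibre $X'$) with the mapping torus $E'' \to S^1$ of $\rho$ (fibre $S^2 \times S^2$), glued along a trivial neck $S^3 \times [0,1] \times S^1$. Note that $b_+(X) = b_+(X') + 1 \ge 3 > 2$, so we are in the single-chamber families setting used above, and the dimension count $d(\mathfrak{s}) = d(\mathfrak{s}') + d(\mathfrak{s}_0) + 1 = 0 + (-2) + 1 = -1$ shows that over the $1$-dimensional base the families moduli space is $0$-dimensional, so $SW_{\mathfrak{s}}(f)$ is the (mod $2$) count of its points.

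Next I would feed this fibrewise connected sum into the gluing formula of \cite{bk1}. In the neck-stretching limit, solutions on $E$ are built by gluing irreducible solutions on the $X'$ side to reducible solutions on the $S^2 \times S^2$ side. On the $X'$ side, $d(\mathfrak{s}') = 0$ and the fibrewise moduli space is rigid; since $b_+(X') > 1$ the invariant is chamber-independent, and since $f'$ preserves $\mathfrak{s}'$ the fibrewise count is constant around the loop and equals the ordinary invariant $SW(X', \mathfrak{s}')$. The $S^2 \times S^2$ side carries the spin structure $\mathfrak{s}_0$ with $c_1 = 0$, for which the moduli space is formally of dimension $-2$ and consists only of reducibles appearing along a codimension-$1$ wall. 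The gluing formula then expresses $SW_{\mathfrak{s}}(f)$ modulo $2$ as the product of $SW(X', \mathfrak{s}')$ with the mod $2$ count $WC(\rho)$ of reducible wall-crossings in the $S^2 \times S^2$ family over $S^1$.

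It then remains to compute $WC(\rho)$ modulo $2$. Because $c_1(\mathfrak{s}_0) = 0$, the reducible wall in the parameter space $\Pi$ for $S^2 \times S^2$ is $\{ \eta^{+_g} = 0 \}$, so along the loop $(g_t, \eta_t)$ of parameters determined by $\rho$ the reducibles are exactly the zeros of the section $t \mapsto \eta_t^{+_{g_t}}$ of the real line bundle $\mathcal{H}^+ \to S^1$ whose fibre is the one-dimensional space $H^+_{g_t}(S^2 \times S^2)$. Hence $WC(\rho) \equiv w_1(\mathcal{H}^+)[S^1] \pmod 2$. The monodromy of $\mathcal{H}^+$ is the action of $\rho^*$ on $H^+(S^2 \times S^2)$, which preserves or reverses the orientation of this line exactly according as $sgn_+(\rho) = 1$ or $sgn_+(\rho) = -1$. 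Therefore $WC(\rho) \equiv 0$ when $sgn_+(\rho) = 1$ and $WC(\rho) \equiv 1$ when $sgn_+(\rho) = -1$, which yields statements (1) and (2) respectively.

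I expect the main obstacle to be the careful application of the gluing formula of \cite{bk1}: verifying its transversality and neck-stretching hypotheses for the family at hand, matching the (here, mod $2$) orientation conventions, and identifying the $S^2 \times S^2$ contribution with the Stiefel--Whitney number $w_1(\mathcal{H}^+)[S^1]$. Once the gluing formula is in force the remaining bundle-theoretic computation is routine; working modulo $2$ throughout is what lets us avoid the orientation difficulties arising from the fact that $\rho$, and hence $f$, may reverse the orientation of $H^+$.
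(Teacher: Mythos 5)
Your proposal is correct and takes essentially the same route as the paper: the paper gives no independent proof of Proposition \ref{prop:glue}, stating it simply as a special case of the gluing formula of \cite{bk1}, which is exactly the theorem you invoke. Your unpacking of how that specialisation works — the rigid fibrewise moduli space on the $X'$ side contributing $SW(X',\mathfrak{s}')$, multiplied by the mod $2$ count of reducibles on the $S^2 \times S^2$ side, identified with $w_1(\mathcal{H}^+)[S^1]$ and hence with $sgn_+(\rho)$ — is a faithful account of why the cited formula yields statements (1) and (2).
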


\section{Non-finitely generated mapping class groups}\label{sec:sw2}

In this section we prove that $M(X)$ is not finitely generated for certain $4$-manifolds.

\begin{theorem}\label{thm:nfg}
Let $X = 2n \mathbb{CP}^2 \# 10n \overline{\mathbb{CP}^2}$, where $n \ge 3$ is odd. Then $M(X)$ is not finitely generated. More precisely, the following holds:
\begin{itemize}
\item[(1)]{There is a surjective homomorphism $\Phi : M_+(X) \to \mathbb{Z}^\infty$ from $M_+(X)$ to $\mathbb{Z}^{\infty}$, where $\mathbb{Z}^{\infty}$ denotes a free abelian group of countably infinite rank.}
\item[(2)]{The mod $2$ reduction of $\Phi$ extends to a homomorphism $\Phi : M(X) \to \mathbb{Z}_2^\infty$.}
\item[(3)]{$M_+(X)$ has index $2$ in $M(X)$.}
\item[(4)]{The short exact sequence $1 \to M_+(X) \to M(X) \to \mathbb{Z}_2 \to 0$ splits.}
\end{itemize}
\end{theorem}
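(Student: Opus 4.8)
The plan is to build the homomorphism $\Phi$ out of the cohomology class $SW \in H^1(M(X); \widehat{\mathbb{Z}}[\mathcal{S}(X)])$ constructed in Section \ref{sec:sw1}, by decomposing the $\Gamma(X)$-action on $\mathcal{S}(X)$ into orbits and extracting one $\mathbb{Z}$-valued class $SW_{\mathcal{O}}$ per orbit. Concretely, for each $\Gamma(X)$-orbit $\mathcal{O} \subseteq \mathcal{S}(X)$ we get $SW_{\mathcal{O}} \in H^1(M_+(X); \mathbb{Z})$, i.e. a homomorphism $M_+(X) \to \mathbb{Z}$, and I would assemble these into $\Phi : M_+(X) \to \bigoplus_{\mathcal{O}} \mathbb{Z}$. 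The heart of the argument is to exhibit infinitely many distinct orbits $\mathcal{O}$ for which $SW_{\mathcal{O}}$ is genuinely nonzero and whose values can be made independent, so that the target is $\mathbb{Z}^\infty$ and the map surjects. To produce elements detected by these classes I would use Torelli diffeomorphisms built by Ruberman's construction (as in \cite{rub1,rub2,bk1}): for a well-chosen $\mathfrak{s}$ with $d(\mathfrak{s}) = -1$ one manufactures $f \in T(X)$ with $SW_{\mathfrak{s}}(f) \ne 0$. Because $X = 2n\mathbb{CP}^2 \# 10n\overline{\mathbb{CP}^2}$ with $n$ odd has $b_+(X) = 2n > 2$, the families invariants are well-defined and integer-valued on $M_+(X)$, so the machinery of Section \ref{sec:sw1} applies.

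The key computational input, which I expect to be the main obstacle, is producing infinitely many orbits $\mathcal{O}_1, \mathcal{O}_2, \dots$ together with Torelli elements $f_1, f_2, \dots$ such that the matrix $\bigl(SW_{\mathcal{O}_i}(f_j)\bigr)$ is, say, invertible (e.g. diagonal with nonzero entries), which simultaneously forces surjectivity onto $\mathbb{Z}^\infty$ and rules out finite generation. The natural source is the gluing formula, Proposition \ref{prop:glue}: writing $X = X' \# (S^2 \times S^2)$ with $b_+(X') > 1$, a spin$^c$-structure $\mathfrak{s}' $ on $X'$ with $d(\mathfrak{s}') = 0$ and nonzero ordinary Seiberg--Witten invariant $SW(X', \mathfrak{s}') \equiv 1 \pmod 2$ yields, via $f = f' \# \rho$ with $sgn_+(\rho) = -1$, a class $\mathfrak{s} = \mathfrak{s}' \# \mathfrak{s}_0$ with $SW_{\mathfrak{s}}(f) \ne 0$. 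The arithmetic restriction to $X = 2n\mathbb{CP}^2 \# 10n\overline{\mathbb{CP}^2}$ with $n \ge 3$ odd is presumably exactly what guarantees, after the necessary blow-up/blow-down bookkeeping, that one can realise infinitely many basic classes on a manifold of the homeomorphism type of such an $X'$ and distribute them into infinitely many distinct $\Gamma(X)$-orbits; this counting is the delicate part and where I would spend most effort, controlling both that the orbits are distinct and that the relevant invariants are odd (hence nonzero).

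For parts (2)--(4) the arguments are more structural. For (2), since $SW$ lives in $H^1(M(X); \widehat{\mathbb{Z}}[\mathcal{S}(X)])$ over the full group but the $\widehat{\mathbb{Z}}$-twist by $sgn_+$ is trivial mod $2$, I would reduce everything modulo $2$: the map $p_{\mathcal{O}}$ summing over an orbit descends to a genuine $M(X)$-equivariant morphism to $\mathbb{Z}_2$, so the mod $2$ reductions of the $SW_{\mathcal{O}}$ assemble into a homomorphism $\Phi : M(X) \to \mathbb{Z}_2^\infty$ extending the reduction of $\Phi$ on $M_+(X)$, and surjectivity follows from the mod $2$ nonvanishing already established. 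For (3), I must show $M_+(X)$ has index exactly $2$, i.e. that some diffeomorphism of $X$ has $sgn_+ = -1$; this holds because $X$ is a connected sum containing summands (such as $S^2 \times S^2$, or $\mathbb{CP}^2 \# \overline{\mathbb{CP}^2}$ blocks) admitting an orientation-preserving diffeomorphism that exchanges the two components of the positive cone (e.g. swapping hyperbolic generators or reflecting), and $sgn_+ : M(X) \to \mathbb{Z}_2$ is then onto. For (4), the splitting of $1 \to M_+(X) \to M(X) \to \mathbb{Z}_2 \to 0$ follows by exhibiting an explicit order-two element $\tau \in M(X)$ with $sgn_+(\tau) = -1$; such an involution is furnished by a reflection diffeomorphism on one of the $S^2 \times S^2$ or $\mathbb{CP}^2$ summands supported in a standard piece, giving a section $\mathbb{Z}_2 \to M(X)$ of the quotient map.
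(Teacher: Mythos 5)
Your outline follows the same route as the paper: extract $\mathbb{Z}$-valued classes $SW_{\mathcal{O}}$ from $\Gamma(X)$-invariant subsets of $\mathcal{S}(X)$, detect them on Torelli elements built from reflections and computed via the gluing formula, and handle (2)--(4) structurally (your arguments for (2)--(4) are essentially the paper's, modulo the point that a reflection made trivial near the connect-sum region is no longer literally an involution; one must check, as the paper does, that $f_0^2$ is a neck Dehn twist isotopic to the identity). However, the heart of the argument is missing, and the one concrete mechanism you propose fails as stated. Writing $X = X' \# (S^2 \times S^2)$ with $X' = (2n-1)\mathbb{CP}^2 \# (10n-1)\overline{\mathbb{CP}^2}$, the manifold $X'$ is itself a connected sum with $b_+ > 0$ on both sides, so \emph{all} of its ordinary Seiberg--Witten invariants vanish: there is no $\mathfrak{s}'$ on $X'$ with $SW(X',\mathfrak{s}')$ odd. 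The nonvanishing input must come from exotic copies: the paper uses the log transforms $E(n)_q$ of the elliptic surface $E(n)$, which for $n$ odd are non-spin with the same intersection lattice as $X'$, together with the dissolving theorem $E(n)_q \# (S^2 \times S^2) \cong X$ and Wall's theorem to obtain diffeomorphisms $\psi_q$ respecting the splitting of $H^2$. Moreover, a single $f = f' \# \rho$ with $sgn_+(\rho) = -1$ has $sgn_+(f) = -1$, so it lies outside $M_+(X)$ (and outside $T(X)$) and carries only a mod $2$ invariant; Ruberman's trick, which you cite but never carry out, is to compose two such reflections coming from \emph{different} smooth structures, $t_q = f_q \circ f_0$ with $f_q = \psi_q \circ (id_{E(n)_q} \# \rho) \circ \psi_q^{-1}$ and $f_0 = id_{X'} \# \rho$. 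This composition does lie in $T(X)$, and the gluing formula gives $SW_{\mathfrak{s}}(t_q) \equiv SW(E(n)_q , \mathfrak{s}') \pmod 2$ because the $f_0$-contribution vanishes.

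The second genuine gap is that your ``diagonal matrix'' plan ignores the fact that $SW_{\mathcal{O}}(t)$ sums the contributions of all spin$^c$-structures in the infinite invariant set $\mathcal{O}$, so exhibiting one $\mathfrak{s} \in \mathcal{O}_i$ with odd invariant is not enough: the remaining terms could cancel it. The paper's choice is $\mathcal{O}_k$, the set of characteristics $c \neq 0$ with $c^2 = 0$ and $c$ equal to $k$ times a primitive class, taken with $k = nq-q-1$, the divisibility of the top basic class $(nq-q-1)f$ of $E(n)_q$. It then (a) pairs $\mathfrak{s}$ with $\bar{\mathfrak{s}}$, using $b_+(X) \equiv 2 \pmod 4$ so that each pair contributes twice; (b) shows that spin$^c$-structures which are nontrivial on the $S^2 \times S^2$ factor contribute $0 \pmod 4$, via a telescoping identity reducing their total contribution to $SW_{\mathfrak{s}_1}(f_q^2) - SW_{\mathfrak{s}_1}(f_0^2) \equiv 0 \pmod 2$ (gluing formula again, now in the case $sgn_+ = +1$); and (c) uses the known basic classes of $E(n)_q$ to get $SW_{\mathcal{O}_{nq-q-1}}(t_q) \equiv 2 \pmod 4$ and $SW_{\mathcal{O}_{nq'-q'-1}}(t_q) \equiv 0 \pmod 4$ for $q' > q$. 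The resulting matrix is triangular mod $4$, not diagonal, which still suffices for linear independence. Finally, since the entries are only nonzero rather than $\pm 1$, such a $\Phi$ does not surject onto $\bigoplus_{\mathcal{O}} \mathbb{Z}$; statement (1) follows instead by observing that the image is an infinite-rank subgroup of a countable free abelian group, hence itself isomorphic to $\mathbb{Z}^\infty$. Without these ingredients --- the exotic manifolds $E(n)_q$, the composition trick, and the orbit-sum cancellation --- your proposal remains a plan rather than a proof.
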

\begin{proof}
First note that $X = X' \# (S^2 \times S^2)$, where $X' = (2n-1)\mathbb{CP}^2 \# (10n-1)\overline{\mathbb{CP}^2}$. It follows from \cite{wall} that $\Gamma(X) = Aut( H^2(X ; \mathbb{Z} ) )$. Observe that $d(\mathfrak{s}) = (c(\mathfrak{s})^2 + 8n)/4 - 2n - 1 = c(\mathfrak{s})^2/4 - 1$. Hence $d(\mathfrak{s}) = -1$ if and only if $c(\mathfrak{s})^2 = 0$. For each $k \ge 1$, let $\mathcal{O}_k \subset \mathcal{S}(X)$ denote the set of spin$^c$-structures whose characteristic $c$ satisfies, $c \neq 0$, $c^2 = 0$, and $c$ is $k$ times a primitive element. We will show that the homomorphism
\[
\Phi = \bigoplus_{q=1}^{\infty} SW_{\mathcal{O}_{nq-q-1}} : M_+(X) \to \mathbb{Z}^{\infty}
\]
surjects to a subgroup of $\mathbb{Z}^{\infty}$ of countably infinite rank. 

The decomposition $X = X' \# (S^2 \times S^2)$ yields an orthogonal decomposition $L = L' \oplus H$, where $L,L'$ are the intersection forms of $X,X'$ and $H = H^2(S^2 \times S^2 ; \mathbb{Z})$ is the hyperbolic lattice. Any characteristic $c \in L$ decomposes as $c = (c_1 , c_2)$, where $c_1 \in L', c_2 \in H$ are characteristic. The intersection form $L'$ is odd, hence $c_1 \neq 0$.

We will partition $\mathcal{O}_k$ into two types of subsets:
\begin{itemize}
\item[(1)]{Subsets $\{ \mathfrak{s} , \bar{\mathfrak{s}} \}$, where $\mathfrak{s} = \mathfrak{s}' \# \mathfrak{s}_0$ and $c(\mathfrak{s}_0) = 0$.}
\item[(2)]{Subsets $\{ \mathfrak{s}_1 , \mathfrak{s}_2 , \bar{\mathfrak{s}}_1 , \bar{\mathfrak{s}}_2 \}$, where $\mathfrak{s}_1 = \mathfrak{s}' \# \mathfrak{s}''$, $\mathfrak{s}_2 = \mathfrak{s}' \# \bar{\mathfrak{s}}''$ and where $c(\mathfrak{s}'') \neq 0$.}
\end{itemize}

Since $b_+(X) = 2n = 2 \; ({\rm mod} \; 4)$, we have $SW_{\mathfrak{s}}(t) = SW_{\bar{\mathfrak{s}}}(t)$ for all $t \in T(X)$. Hence a subset of type of $\mathcal{O}_k$ of type (1) will contribute $2 SW_{\mathfrak{s}}(t)$ to $SW_{\mathcal{O}_k}(t)$ and a subset of type (2) will contribute $2 ( SW_{\mathfrak{s}_1}(t) + SW_{\mathfrak{s}_2}(t) )$.

Let $E(n)_q$ be the elliptic surface obtained from $E(n)$ by performing a logarithmic transform of multiplicity $q \ge 1$. Since $n$ is odd, $E(n)_q$ is not spin and its intersection form is diagonal of signature $(2n-1 , 10n-1)$. Hence $E(n)_q$ has the same intersection lattice as $X'$. Furthermore, we have that $E(n)_q \# (S^2 \times S^2)$ is diffeomorphic to $2n \mathbb{CP}^2 \# 10n \overline{\mathbb{CP}^2} = X = X' \# (S^2 \times S^2)$ \cite[Page 320]{gs}. So there is an orientation preserving diffeomorphism $\psi_q : E(n)_q \# (S^2 \times S^2) \to X' \# (S^2 \times S^2)$. We can choose $\psi_q$ so that it respects the decomposition $H^2( E(n)_q ; \mathbb{Z}) \oplus H^2(S^2 \times S^2 ; \mathbb{Z}) \to H^2( X' ; \mathbb{Z}) \oplus H^2(S^2 \times S^2 ; \mathbb{Z})$. To see this, first let $\psi'_q : E(n)_q \# (S^2 \times S^2) \to X' \# (S^2 \times S^2)$ be any diffeomorphism. Then by \cite{wall}, every isometry of $H^2(X' \# (S^2 \times S^2) ; \mathbb{Z})$ can be realised by a diffeomorphism. Hence, composing $\psi'_q$ with a suitable diffeomorphism of $X$, we obtain the desired diffeomorphism $\psi_q$.

Let $\rho$ be a diffeomorphism of $S^2 \times S^2$ which acts as $-1$ on $H^2(S^2 \times S^2 ; \mathbb{Z})$ and is trivial in a neighbourhood of some point. Such diffeomorphisms can easily be constructed, for example, take the product $r \times r$ of two copies of a reflection on $S^2$ and then isotopy it to act trivially in a neighbourhood of a point. Define a diffeomorphism $f_0 \in M(X)$ by setting $f_ 0 = id_{X'} \# \rho$, where the connected sum is performed by removing a ball of $(S^2 \times S^2)$ on which $\rho$ acts trivially. For each $q \ge 1$, define a diffeomorphism $f_q \in M(X)$ by setting $f_q = \psi_q \circ ( id_{E(n)_q} \# \rho ) \circ \psi_q^{-1}$. Note that $sgn_+(f_0) = sgn_+(f_q) = -1$. Also $t_q = f_q \circ f_0 \in T(X)$.

We claim that $SW_{\mathcal{O}_{nq-q-1}}(t_q) = 2 \; ({\rm mod} \; 4)$ and that $SW_{\mathcal{O}_{nq'-q'-1}}(t_{q}) = 0 \; ({\rm mod} \; 4)$ for all $q' > q$. This implies that the elements $\{ \Phi(t_q) \}_{q \ge 1}$ are linearly independent. To see this, first note that $\Phi(t_q) \in 2\mathbb{Z}^{\infty}$ by charge conjugation symmetry and that the image of $\{ \Phi(t_q)/2 \}_{q \ge 1}$ is a basis for $\mathbb{Z}_2^\infty$, by the above claim. Now suppose $n_1 \Phi(t_1) + n_2 \Phi(t_2) + \cdots + n_r \Phi(t_r) = 0$ for some $n_1, \dots , n_r \in \mathbb{Z}$, not all zero. Without loss of generality we can assume that $gcd(n_1, \dots , n_r) = 1$. Then $n_1 \Phi(t_1)/2 + \cdots + n_r \Phi(t_r)/2 = 0$. But $\{ \Phi(t_q)/2\}_{q_ \ge 1}$ are linearly independent in $\mathbb{Z}_2^\infty$, so $n_1, \dots , n_r$ are all even, a contradiction.

Now we prove the claim. Let $q,q' \ge 1$ and set $k = nq'-q'-1$. By partitioning $\mathcal{O}_k$ into subsets of type (1) and (2) as described above, we can then write $SW_{\mathcal{O}_k}(t_{q})$ as a sum of contributions from sets of type (1) and (2). Consider a contribution from a subset $\{ \mathfrak{s} , \bar{\mathfrak{s}} \}$ of type (1). So $\mathfrak{s} = \mathfrak{s}' \# \mathfrak{s}_0$. The contribution is $2 SW_{\mathfrak{s}}( t_q )$. Since $f_q$ and $f_0$ both preserve $\mathfrak{s}$, we have that
\begin{align*}
SW_{\mathfrak{s}}(t_q) &= SW_{\mathfrak{s}}( f_q \circ f_0 ) \\
&= SW_{\mathfrak{s}}( f_q ) + SW_{\mathfrak{s}}( f_0) \; ({\rm mod} \; 2) \\
&= SW( E(n)_q , \mathfrak{s}' ) \; ({\rm mod} \; 2)
\end{align*}
where the last equality is due to Proposition \ref{prop:glue}. Let $f \in H^2( E(n)_q ; \mathbb{Z})$ denote the class of the multiple fibre. Then $f$ is non-zero, primitive and $f^2 = 0$. From the well-known calculation of the Seiberg--Witten invariants for elliptic surfaces \cite[Chapter 3]{nic}, we have that $SW( E(n)_q , \mathfrak{s}' ) = 0$ unless $c(\mathfrak{s}')$ is a multiple of $f$. More precisely, $SW(E(n)_q , \mathfrak{s}')$ is zero unless $c(\mathfrak{s}') = 2( qk + a)f - (nq-q-1)f$, where $0 \le k \le n-2$ and $0 \le a < q$. In such a case $SW(E(n)_q , \mathfrak{s}') = (-1)^k \binom{ n-2}{ k }$. In particular, $SW( E(n)_q , \mathfrak{s}') = \pm 1$ if $c(\mathfrak{s}') = (nq-q-1)f$ and $SW(E(n)_q , \mathfrak{s}') = 0$ if $c(\mathfrak{s}) = u f$, $u > nq-q-1$. Now suppose that $q' \ge q$. We have that $SW(E(n)_q , \mathfrak{s}') = 0$ unless $c(\mathfrak{s})$ is a multiple of $f$. But since $\mathfrak{s} = \mathfrak{s}' \# \mathfrak{s}_0 \in \mathcal{O}_k$, this can only happen if $c(\mathfrak{s}') = \pm k f$ (recall that $\mathcal{O}_k$ is the set of spin$^c$-structures whose characteristic $c$ satisfies, $c \neq 0$, $c^2 = 0$, and $c$ is $k$ times a primitive element). Hence if $q' > q$, then every pair $\{ \mathfrak{s} , \bar{\mathfrak{s}} \}$ of type (1) contributes $0$ mod $4$ to $SW_{\mathcal{O}_k}(t_q)$. If $q' = q$, then there is exactly one pair $\{ \mathfrak{s} , \bar{\mathfrak{s}} \}$ of type (1) which contributes $2$ mod $4$ to $SW_{\mathcal{O}_k}(t_q)$ and all other pairs are $0$ mod $4$.

Now consider the contribution from a set $\{ \mathfrak{s}_1 , \mathfrak{s}_2 , \bar{\mathfrak{s}}_1 , \bar{\mathfrak{s}}_2 \}$ of type (2), where $\mathfrak{s}_1 = \mathfrak{s}' \# \mathfrak{s}''$, $\mathfrak{s}_2 = \mathfrak{s}' \# \bar{\mathfrak{s}}''$ and where $c(\mathfrak{s}'') \neq 0$. As seen above, the contribution is $2 ( SW_{\mathfrak{s}_1}(t_q) + SW_{\mathfrak{s}_2}(t_q) )$. We will show that $SW_{\mathfrak{s}_1}(t_q) + SW_{\mathfrak{s}_2}(t_q) = 0 \; ({\rm mod} \; 2)$, hence all subsets of type (2) contribute $0$ mod $4$ and this will prove the claim.

Observe that $\mathfrak{s}_2 = f_q(\mathfrak{s}_1) = f_0(\mathfrak{s}_1)$. Let $h \in \Pi^{reg}$. Then
\begin{align*}
SW_{\mathfrak{s}_1}(t_q) &= SW_{\mathfrak{s}_1}( h , t_q(h) ) \\
&= SW_{\mathfrak{s}_1}(h , f_q f_0(h) ) \\
&= SW_{\mathfrak{s}_1}(h , f_q(h) ) + SW_{\mathfrak{s}_1}( f_q(h) , f_q f_0(h) ) \\
&= SW_{\mathfrak{s}_1}(h , f_q(h) ) - SW_{\mathfrak{s}_2}( h , f_0(h)).
\end{align*}
Similarly, $SW_{\mathfrak{s}_2}(t_q) = SW_{\mathfrak{s}_2}(h,f_q(h)) - SW_{\mathfrak{s}_1}(h,f_0(h))$. Hence
\begin{align*}
SW_{\mathfrak{s}_1}(t_q) + SW_{\mathfrak{s}_2}(t_q) &= \left( SW_{\mathfrak{s}_1}(h,f_q(h)) + SW_{\mathfrak{s}_2}(h,f_q(h)) \right) \\
& \quad \quad \quad - \left( SW_{\mathfrak{s}_1}(h,f_0(h)) + SW_{\mathfrak{s}_2}(h,f_0(h)) \right) \\
&= \left( SW_{\mathfrak{s}_1}(h,f_q(h)) - SW_{\mathfrak{s}_1}(f_q(h),f_q^2(h)) \right) \\ 
& \quad \quad \quad - \left( SW_{\mathfrak{s}_1}(h,f_0(h)) - SW_{\mathfrak{s}_1}(f_0(h),f_0^2(h)) \right) \\
&= SW_{\mathfrak{s}_1}( h , f_q^2(h) ) - SW_{\mathfrak{s}_1}( h , f_0^2(h) ) \\
&= SW_{\mathfrak{s}_1}( f_q^2 ) - SW_{\mathfrak{s}_1}( f_0^2 ) \\
&= 0 \; ({\rm mod} \; 2)
\end{align*}
where the last line follows from Proposition \ref{prop:glue}. This proves the claim. Hence we have proven that there exists a surjective homomorphism $M_+(X) \to \mathbb{Z}^\infty$.

The fact that the mod $2$ reduction of $\Phi$ extends to $M(X)$ follows by noting that $\frac{1}{2}SW_{\mathcal{O}_k} \in H^1( M_+(X) ; \mathbb{Z})$ extends to $H^1(M(X) ; \widehat{\mathbb{Z}})$ and then applying the mod $2$ reduction map $\widehat{\mathbb{Z}} \to \mathbb{Z}_2$.

The fact that $M_+(X)$ has index $2$ in $M(X)$ follows immediately from $sgn_+(f_0) = -1$. Furthermore, it is easy to see that $f_0^2$ is isotopic to a Dehn twist on the neck of the connected sum $X' \# (S^2 \times S^2)$. By using the circle action on $S^2 \times S^2$, it follows that this is isotopic to the identity. So $f_0$ defines a splitting $\mathbb{Z}_2 \to M(X)$ of the sequence $1 \to M_+(X) \to M(X) \to \mathbb{Z}_2 \to 0$.
\end{proof}

\section{Split extensions}\label{sec:split}

Let $L = \mathbb{Z}^n$ denote the standard diagonal lattice of rank $n$ with orthonormal basis $e_1, \dots , e_n$. The isometry group of $L$ is the hyperoctahedral group $H_n$, which is also the Coxeter group of type $BC_n$. This group is easily seen to be generated by permutations of $e_1, \dots , e_n$ and the reflections $r_1, \dots , r_n$ in the hyperplanes orthogonal to $e_1, \dots , e_n$. The reflections generate a normal subgroup isomorphic to $\mathbb{Z}_2^n$ and $H_n$ is the semidirect product $H_n = S_n \ltimes \mathbb{Z}_2^n$.

Let $X = n \mathbb{CP}^2$ be the connected sum of $n$ copies of $\mathbb{CP}^2$. Then $H^2(X ; \mathbb{Z})$ is isomorphic to $L$ with $e_1, \dots , e_n$ corresponding to generators of $H^2(\mathbb{CP}^2 ; \mathbb{Z})$ for the $n$ summands of $X$. It is not hard to see that $\Gamma(X)$ is equal to the full isometry group of $L$. This will also follow from the construction given below.

\begin{theorem}\label{thm:split}
For $X = n \mathbb{CP}^2$, there is a splitting $\Gamma(X) \to M(X)$.
\end{theorem}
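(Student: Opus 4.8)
The plan is to construct a section $s\colon \Gamma(X)\to M(X)$ by realizing a generating set of $\Gamma(X)$ by diffeomorphisms and checking that the defining relations hold in $M(X)$, i.e.\ up to isotopy. Since $\Gamma(X)=\mathrm{Aut}(L)=H_n=S_n\ltimes\mathbb{Z}_2^n$, the universal property of the semidirect product reduces this to three tasks: build a homomorphism $S_n\to M(X)$ realizing the permutation action on $e_1,\dots,e_n$; build a homomorphism $\mathbb{Z}_2^n\to M(X)$ realizing the reflections $r_1,\dots,r_n$; and verify the compatibility relation $P_\pi\sigma_i P_\pi^{-1}=\sigma_{\pi(i)}$ in $M(X)$, where $P_\pi$ and $\sigma_i$ denote the classes realizing a permutation $\pi$ and a reflection $r_i$. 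Granting these, $\pi\circ s=\mathrm{id}$ since it is the identity on generators; in particular this also reproves that $\Gamma(X)=H_n$.

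For the permutations I would fix a symmetric model $X=W\cup(P_1\cup\dots\cup P_n)$, where each $P_i$ is a copy of $\mathbb{CP}^2$ with an open ball removed and $W$ is an $n$-punctured $S^4$ whose boundary spheres are permuted compatibly; permuting the $P_i$ then gives diffeomorphisms $P_\pi$ assembling into a homomorphism $S_n\to M(X)$ that realizes $\pi$ on $\{e_i\}$ (this summand-permutation construction is standard). For each reflection I would use complex conjugation $c$ on the $i$-th summand: $c$ is orientation preserving and acts as $-1$ on $H^2(\mathbb{CP}^2;\mathbb{Z})$, and since it is orientation preserving and fixes a point it may be isotoped to equal the identity on a ball $B_0$. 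Extending this representative by the identity over the rest of $X$ yields $\sigma_i\in\mathrm{Diff}(X)$ realizing $r_i$ and supported in the $i$-th summand away from the connect-sum spheres. Diffeomorphisms with disjoint support commute, so $\sigma_i\sigma_j=\sigma_j\sigma_i$ for $i\neq j$; and conjugating $\sigma_i$ by the summand swap $P_\pi$ produces conjugation on the $\pi(i)$-th summand, giving $P_\pi\sigma_i P_\pi^{-1}=\sigma_{\pi(i)}$ once the model identifies all summands with a fixed copy of $\mathbb{CP}^2$.

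The one delicate relation is $\sigma_i^2\simeq\mathrm{id}$, and the key mechanism is functoriality of ``connected sum with the identity'' at the level of $\pi_0$. Extension by the identity is a homomorphism $\mathrm{Diff}(\mathbb{CP}^2,B_0)\to\mathrm{Diff}(X)$, and $\sigma_i$ is the image of the class of $c$ in $M_1(\mathbb{CP}^2)=\pi_0\mathrm{Diff}(\mathbb{CP}^2,B_0)$. Because $[c]^2=[c^2]=[\mathrm{id}]=1$ in $\pi_0\mathrm{Diff}(\mathbb{CP}^2)$, the square $c^2$ (rel $B_0$) lies in the kernel of $M_1(\mathbb{CP}^2)\to M(\mathbb{CP}^2)$, which by the analysis of boundary Dehn twists in Section~\ref{sec:bdt} is generated by the boundary Dehn twist. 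Thus $\sigma_i^2$ is, up to isotopy in $X$, either trivial or the Dehn twist along the $i$-th connect-sum sphere, and I would trivialize the latter exactly as in the proof of Theorem~\ref{thm:nfg}: the $S^1$-action $[z_0:z_1:z_2]\mapsto[z_0:e^{i\theta}z_1:z_2]$ on $\mathbb{CP}^2$ fixes $[1:0:0]$ and acts near it as the rotation $\mathrm{diag}(e^{i\theta},1)$, a loop generating $\pi_1(SO(4))=\mathbb{Z}_2$; removing an invariant ball about this point gives a loop of diffeomorphisms of $\mathbb{CP}^2\setminus B_0$ restricting on the boundary to the generating loop, which shows the connect-sum sphere Dehn twist is isotopically trivial in $X$. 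Hence $\sigma_i^2\simeq\mathrm{id}$ and we obtain the required homomorphism $\mathbb{Z}_2^n\to M(X)$.

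I expect the verification of $\sigma_i^2\simeq\mathrm{id}$ to be the main obstacle. The underlying tension is that a nontrivial smooth involution cannot be the identity on any open set---otherwise its fixed-point set would be open and closed, hence all of the connected manifold---so $\sigma_i$ cannot be arranged to be an honest involution supported in a single summand; the relation $\sigma_i^2=1$ can only be achieved up to isotopy, by routing through $\pi_0$-functoriality of the connected sum together with the triviality of the boundary Dehn twist of $\mathbb{CP}^2$. This is consistent with the failure of Nielsen realization (Theorem~\ref{thm:n}): the finite group $H_n$ need not lift to an actual subgroup of $\mathrm{Diff}(X)$, only to a section into the mapping class group $M(X)$.
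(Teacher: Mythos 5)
Your reduction to the semidirect product and your treatment of the reflection factor $\mathbb{Z}_2^n$ are sound: realizing $r_i$ by complex conjugation isotoped to be the identity on a ball, observing that $\sigma_i^2$ lies in the kernel of $M_1(\mathbb{CP}^2)\to M(\mathbb{CP}^2)$, and killing the possible boundary Dehn twist (by your circle-action argument, or even more directly from Proposition \ref{prop:bdt}(2), since $\mathbb{CP}^2$ is not spin) all hold up. The genuine gap is the step you dismiss as ``standard'': the existence of a homomorphism $S_n\to M(X)$ realizing the permutation action. There is no ``symmetric model'' in the strong sense you invoke: an effective smooth $S_n$-action on the $n$-punctured $S^4$ permuting the boundary spheres would extend, by coning off the boundary components (finite smooth actions on $S^3$ are linearizable), to an effective smooth $S_n$-action on $S^4$; for $n\ge 7$ this is impossible, since $S_n$ has no faithful real representation of dimension $\le 5$ (so no linear action exists), and known restrictions on smooth finite group actions on homology $4$-spheres (Mecchia--Zimmermann) exclude $A_n$, $n\ge 7$, altogether. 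So ``permuting the $P_i$'' cannot be implemented by an honest action, and the classes $P_\pi$ are only defined after choices.

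Once the $P_\pi$ are only defined up to such choices, the $S_n$ relations must be verified in $M(X)$, and this is exactly as delicate as the relation $\sigma_i^2=1$ that you rightly flagged --- in fact worse. If $T_i$ is a swap of the summands $i,i+1$ supported in a ball containing them, then $T_i^2$ and $(T_iT_{i+1})^3$ act trivially on cohomology, but a priori they are merely Torelli-type elements supported in a punctured $2\mathbb{CP}^2$, respectively $3\mathbb{CP}^2$; unlike the kernel of $M_1\to M$, which is known to be generated by the boundary Dehn twist \cite{gir}, the groups in which these error terms live (Torelli groups of $k\mathbb{CP}^2$, or $\pi_0\mathrm{Diff}(D^4,\partial)$) are not known to vanish, so your boundary-twist mechanism does not dispose of them. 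This is precisely the difficulty the paper's proof is designed to avoid: it builds a smooth fibre bundle over $B=\widetilde{C}_n/H_n$, where $\widetilde{C}_n$ (configurations of $n$ points in $S^4$ equipped with complex structures on their tangent spaces) is simply connected and carries a free $H_n$-action, so the monodromy $\pi_1(B)=H_n\to M(X)$ is automatically a group homomorphism and all relations --- permutation, reflection, and mixed --- hold for free; the $S^2\cong SO(4)/U(2)$ of complex structures absorbs exactly the $\pi_1(SO(4))$ framing ambiguities that obstruct the hands-on approach. To complete your argument you would need a mechanism of this kind for the permutation factor; as written, that half of the semidirect product is unsupported.
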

\begin{proof}
We will construct a smooth fibre bundle $\pi : E \to B$ with fibres diffeomorphic to $X$ and such that the monodromy of the local system $R^2 \pi_* \mathbb{Z}$ yields an isomorphism $\rho : \pi_1(B) \to Aut(L)$. The geometric monodromy of the family defines a lift $\widetilde{\rho} : \pi_1(B) \to \pi_0(Diff(X)) = M(X)$ of $\rho$ to $M(X)$. Then $\widetilde{\rho} \circ \rho^{-1} : Aut(L) \to M(X)$ is the desired splitting (this also proves that $\Gamma(X) \cong Aut(L)$).

Let $C_m$ be the space of $m$-tuples of distinct points on $S^4$. Clearly $C_1$ is diffeomorphic to $S^4$. For $m > 1$ there is a natural map $C_m \to C_{m-1}$ given by forgetting the $m$-th point. This map gives $C_m$ the structure of a fibre bundle over $C_{m-1}$ with fibre $F_{m-1}$ the $4$-sphere with $m-1$ points removed. Since $\pi_1(F_{m-1}) = \pi_0(F_{m-1}) = 1$, the long exact sequence in homotopy yields an isomorphism $\pi_1( C_{m}) \cong \pi_1(C_m)$. Since $\pi_1(C_1) = \pi_1(S^4) = 1$, it follows by induction that $\pi_1(C_n) = 1$ for all $n$.

Fix an orientation on $S^4$. Let $\widetilde{C}_n$ denote the space consisting of an $n$-tuple $(x_1 , \dots , x_n)$ of distinct points of $S^4$ together with an $n$-tuple $(I_1 , \dots , I_n)$, where $I_j$ is a complex structure on $T_{x_j} S^4$ which induces the given orientation. The forgetful map $\widetilde{C}_n \to C_n$ which forgets the complex structures $I_1, \dots , I_n$ gives $\widetilde{C}_n$ the structure of a fibre bundle over $C_n$. Since the space of complex structures on $\mathbb{R}^4$ compatible with a given orientation is isomorphic to $SO(4)/U(2) \cong S^2$, it follows that the fibres of $\widetilde{C}_n \to C_n$ are isomorphic to $(S^2)^n$. The long exact sequence in homotopy implies that $\pi_1( \widetilde{C}_n) = 1$.

Consider the trivial family $\widetilde{E}_0 = \widetilde{C}_n \times S^4 \to \widetilde{C}_n$. This family is equipped with $n$ sections $s_1, \dots , s_n$, where $s_j( (x_1 , \dots , x_n) , (I_1 , \dots , I_n)) = x_j$. The normal bundle of $s_j$ is $N_j = T_{x_j} S^4$. The complex structure $I_j$ gives $N_j$ the structure of a complex rank $2$ vector bundle. Therefore, we can form a family $\widetilde{E}_n$ by blowing up $\widetilde{E}_0$ along the sections $s_1, \dots , s_n$. More precisely, consider the fibre bundle over $\widetilde{C}_n$ with fibre $\mathbb{CP}^2$ given by the projective bundle $\mathbb{P}( \mathbb{C} \oplus N_j)$. This bundle has a natural section $t_j$ corresponding to the $1$-dimensional subbundle $\mathbb{C} \subset \mathbb{C} \oplus N_j$. The normal bundle of $t_j$ is isomorphic to $N_j$. Since $s_j$ and $t_j$ have isomorphic normal bundles, we can attach $\mathbb{P}(\mathbb{C} \oplus N_j)$ to $\widetilde{E}_0$ by removing tubular neighbourhoods of $s_j$ and $t_j$ and identifying the boundaries.

The hyperoctahedral group $H_n = S_n \ltimes \mathbb{Z}_2^n$ acts on $\widetilde{C}_n$ as follows. The permutation group $S_n$ acts by permuting the points $x_1, \dots , x_n$ as well as the corresponding complex structures $I_1, \dots , I_n$. The group $\mathbb{Z}_2^n$ is generated by reflections $r_1, \dots , r_n$. We let $r_j$ act by fixing $x_1, \dots , x_n$, sending $I_j$ to $-I_j$ and fixing the remaining complex structures. The action of $H_n$ is free and we let $B = \widetilde{C}_n/H_n$ be the quotient. It follows that $\pi_1(B) \cong H_n$. The action of $H_n$ on $\widetilde{C}_n$ lifts to an action on $\widetilde{E}_0 = \widetilde{C}_n \times S^4$ which acts trivially on the $S^4$ factor. It is not hard to see that $\widetilde{E}_n$ can be constructed in such a way that the action of $H_n$ extends to it. Now let $E = \widetilde{E}_n/ H_n$. This is a family $\pi : E \to B$ over $B$ with fibres diffeomorphic to $n \mathbb{CP}^2$. The monodromy of the local system $R^2 \pi_* \mathbb{Z}$ is easily seen to yield an isomorphism $\rho : \pi_1(B) \to Aut(L)$. As explained above, this yields a splitting $\Gamma(X) \to M(X)$.
\end{proof}

\section{Non-split extensions}\label{sec:nsplit}

Let $X$ be a compact, simply connected, smooth $4$-manifold with intersection form $L = H^2(X ; \mathbb{Z})$. Let $\Sigma$ be a compact surface (orientable or non-orientable). Suppose that $\rho : \pi_1(\Sigma) \to \Gamma(X)$ is a homomorphism. Letting $\Gamma(X)$ act on $L_{\mathbb{R}} = \mathbb{R} \otimes_{\mathbb{Z}} L$, we obtain a flat vector bundle $H_{\rho} \to \Sigma$ which has a covariantly constant bilinear form of signature $(b_+(X) , b_-(X))$. Let $H^+_{\rho}$ denote a maximal positive definite subbundle of $H_{\rho}$. The choice of subbundle $H^+_{\rho}$ is not unique, but all such subbundles are isomorphic. In particular the Stiefel--Whitney classes $w_{j}( H^+_{\rho}) \in H^j( \Sigma ; \mathbb{Z}_2)$ depend only on $\rho$.

\begin{theorem}\label{thm:nolift}
Let $X$ be a compact, simply connected, smooth $4$-manifold with $b_+(X)=2$ and let $\rho : \pi_1(\Sigma) \to \Gamma(X)$ be a homomorphism. Suppose that $w_2( H^+_{\rho}) \neq 0$ and suppose there exists a characteristic $c \in L$ which is $\rho$-invariant and satisfies $c^2 > \sigma(X)$. Then $\rho$ does not lift to a homomorphism $\widetilde{\rho} : \Gamma(X) \to M(X)$.
\end{theorem}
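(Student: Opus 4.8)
The plan is to argue by contradiction using families Seiberg–Witten theory, specifically the obstruction-theoretic invariant coming from the wall-crossing count $SW_{\mathfrak{s}}$ developed in Section \ref{sec:sw1}. Suppose, for contradiction, that a lift $\widetilde{\rho} : \pi_1(\Sigma) \to M(X)$ of $\rho$ exists. This lift determines a smooth fibre bundle $\pi : E \to \Sigma$ with fibre $X$, whose cohomological monodromy is exactly $\rho$. The hypothesis $b_+(X) = 2$ is the crucial one: it places us in the regime where the one-parameter families invariant is available, and where the charge-conjugation sign $(-1)^{b_+(X)/2+1}$ and the orientation-reversal issues (governed by $sgn_+$) are controlled by $w_2(H^+_\rho)$.

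\textbf{Key steps.} First I would use the $\rho$-invariant characteristic $c \in L$ with $c^2 > \sigma(X)$ to single out a spin$^c$-structure $\mathfrak{s}$ on the total space $E$. Invariance of $c$ under $\rho$ ensures that $\mathfrak{s}$ extends fibrewise over $\Sigma$, so there is a genuine spin$^c$-structure on the vertical tangent bundle restricting to $\mathfrak{s}$ on each fibre. The condition $c^2 > \sigma(X)$ translates, via $d(\mathfrak{s}) = (c^2 - \sigma(X))/4 - b_+(X) - 1$, into $d(\mathfrak{s}) \ge -1$ being large enough that the relevant families moduli space over $\Sigma$ has the expected dimension matching $\dim \Sigma$, so that the families Seiberg–Witten invariant is a well-defined count (a number, or a $\mathbb{Z}_2$-count when orientations fail). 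Second, I would compute this families invariant in two ways. On one hand, diffeomorphism invariance and the hypothesis $w_2(H^+_\rho) \neq 0$ force the invariant to be nonzero: the obstruction to trivialising the family is detected by $w_2$ of the positive subbundle, precisely because the orientation of the Seiberg–Witten moduli space is governed by a maximal positive subspace of $H^2(X;\mathbb{R})$, and a nontrivial $w_2(H^+_\rho)$ means this orientation data does not extend over $\Sigma$. On the other hand, if the lift $\widetilde{\rho}$ exists, the family $E$ admits a fibrewise metric and perturbation pulled back from a single regular $h \in \Pi^{reg}$ via the lifted diffeomorphisms; this should force the count to vanish, giving the contradiction.

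\textbf{The main obstacle.} The hard part will be making precise the identification of the families Seiberg–Witten invariant over a general (possibly non-orientable) surface $\Sigma$ with the characteristic class $w_2(H^+_\rho)$, and showing that the geometric realisability of $\rho$ by diffeomorphisms is exactly what kills it. The subtlety is that when $sgn_+$ is nontrivial along some loops in $\Sigma$ — which is exactly the situation forced by $w_2(H^+_\rho) \neq 0$ — the moduli space is only $\mathbb{Z}_2$-oriented, so the invariant lives in $\mathbb{Z}_2$ and one must track the mod $2$ wall-crossing carefully. I expect this to reduce to the main theorem of \cite{bar1}, which presumably computes such families invariants in terms of $w_2(H^+_\rho)$ and the Seiberg–Witten invariants of the fibre; the real work is verifying that our hypotheses ($b_+ = 2$, the existence of the $\rho$-invariant $c$ with $c^2 > \sigma(X)$, and $w_2(H^+_\rho) \neq 0$) match the input needed to apply that result and conclude that a lift would produce a vanishing invariant in contradiction with its nonvanishing.
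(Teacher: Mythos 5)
Your overall strategy matches the paper's: assume the lift $\widetilde{\rho}$ exists, use it to build a smooth fibre bundle $E \to \Sigma$ with fibre $X$ and cohomological monodromy $\rho$, and derive a contradiction from \cite[Theorem 1.1]{bar1}. That is in fact the entire proof in the paper, and its only real content is the construction of the family, which you assert in one sentence rather than prove: one takes the standard presentation of $\pi_1(\Sigma)$ (a product of commutators in the orientable case, a product of squares $a_1^2 \cdots a_k^2$ in the non-orientable case), chooses diffeomorphisms representing the images of the generators under $\widetilde{\rho}$, observes that the relator is isotopic to the identity precisely because $\widetilde{\rho}$ is a homomorphism into $M(X)$, builds the bundle over the $1$-skeleton from mapping cylinders, and uses the chosen isotopy to extend it over the $2$-cell. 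This is where the hypothesis that $\rho$ lifts is actually consumed, so it should be spelled out; note also that the non-orientable case must be handled (the application in the paper takes $\Sigma = \mathbb{RP}^2$), which your write-up does not address.

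The genuine flaw is in your second computation of the families invariant. You claim that if the lift exists then $E$ ``admits a fibrewise metric and perturbation pulled back from a single regular $h \in \Pi^{reg}$ via the lifted diffeomorphisms,'' forcing the count to vanish. This is false: a lift to $M(X)$ (or even a genuine action by diffeomorphisms) gives no metric invariance at all --- the monodromy diffeomorphisms do not preserve $h$, so there is no well-defined fibrewise pair on $E$ pulled back from a single $h$, and no such vanishing argument exists. Fortunately, none is needed. In the paper the lift is used \emph{only} to produce the smooth family $E$; the contradiction is then immediate because \cite[Theorem 1.1]{bar1} directly forbids the existence of a smooth family over a compact surface whose fibre has $b_+(X) = 2$, whose monodromy fixes a characteristic $c$ with $c^2 > \sigma(X)$, and for which $w_2(H^+_\rho) \neq 0$. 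So the correct division of labour is: lift $\Rightarrow$ family, and families Seiberg--Witten theory (entirely black-boxed in \cite{bar1}) $\Rightarrow$ no such family. Your closing paragraph essentially concedes this reduction, but the intermediate mechanism you propose should be deleted rather than made precise, since it cannot be.
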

\begin{proof}
Consider first the case that $\Sigma$ is orientable of genus $g$. Recall that $\pi_1(\Sigma)$ admits a presention
\[
\pi_1(\Sigma) = \langle a_1,b_1, \dots , a_g,b_g \; | \; [a_1,b_1] \cdots [a_g,b_g] \rangle
\]
Suppose that $\rho$ admits a lift $\widetilde{\rho} : \pi_1(\Sigma) \to M(X)$. Let $\alpha_j$ be a diffeomorphism of $X$ whose isotopy class is $\widetilde{\rho}(a_j)$ and let $\beta_j$ be a diffeomorphism of $X$ whose isotopy class is $\widetilde{\rho}(b_j)$. Then $[\alpha_1,\beta_1] \cdots [\alpha_g,\beta_g]$ is isotopic to the identity. The surface $\Sigma$ can be constructed from a wedge of $2g$ circles by attaching a $2$-cell whose attaching map represents $[a_1,b_1] \cdots [a_g ,b_g]$ in $\pi_1( \vee_{i=1}^{2g} S^1)$. We will construct a smooth family $\pi : E \to \Sigma$ whose fibres are diffeomorphic to $X$ as follows. Over the $1$-skeleton $\vee_{i=1}^{2g} S^1$, we take the wedge sum of mapping cylinders associated to the diffeomorphisms $\alpha_1 , \beta_1 , \dots , \alpha_g , \beta_g$. A choice of isotopy from $[\alpha_1 , \beta_1] \cdots [\alpha_g , \beta_g]$ to the identity allows us to extend this family over the $2$-cell and in this way we obtain the family $\pi : E \to \Sigma$. By construction, the local system $R^2 \pi_* \mathbb{Z}$ has monodromy $\rho$. Now suppose that $w_2( H^+_{\rho}) \neq 0$ and that there exists a characteristic $c \in L$ which is $\rho$-invariant and satisfies $c^2 > \sigma(X)$. This contradicts \cite[Theorem 1.1]{bar1}, hence $\rho$ does not lift to $M(X)$.

The case that $\Sigma$ is non-orientable is similar. Recall that $\pi_1(\Sigma)$ admits a presentation
\[
\pi_1(\Sigma) = \langle a_1 , \dots , a_k \; | \; a_1^2 \cdots a_k^2 \rangle
\]
where $\Sigma$ has Euler characteristic $1-k$. If $\rho$ lifts to a homomorphism $\widetilde{\rho} : \pi_1(\Sigma) \to M(X)$, then we choose diffeomorphisms $\alpha_1 , \dots , \alpha_k$ where the isotopy class of $\alpha_j$ is $\widetilde{\rho}(a_j)$. Then $\alpha_1^2 \cdots \alpha_k^2$ is isotopic to the identity. A choice of such an isotopy allows us to construct a smooth family $\pi : E \to \Sigma$ with fibres diffeomorphic to $X$ and such that the monodromy of $R^2 \pi_* \mathbb{Z}$ is $\rho$. As before, this contradicts \cite[Theorem 1.1]{bar1}, hence $\rho$ does not lift to $M(X)$.
\end{proof} 

\begin{remark}
A similar argument was used in \cite{kmt} to prove the non-triviality of $T(X)$ for $X = 2\mathbb{CP} \# n \overline{\mathbb{CP}^2}$, $n \ge 11$.
\end{remark}

\begin{corollary}
Let $X = (S^2 \times S^2) \# X'$, where $b_+(X') = 1$, $b_-(X') \ge 10$. Then there does not exist a splitting $\Gamma(X) \to M(X)$.
\end{corollary}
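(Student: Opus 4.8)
The plan is to deduce the corollary directly from Theorem~\ref{thm:nolift}. Since $X = (S^2 \times S^2) \# X'$, Wall's theorem \cite{wall} gives $\Gamma(X) = Aut(L)$, while $b_+(X) = 2$ and $\sigma(X) = 2 - b_-(X) \le -9$. A splitting $s : \Gamma(X) \to M(X)$ would, for every compact surface $\Sigma$ and every homomorphism $\rho : \pi_1(\Sigma) \to \Gamma(X)$, produce a lift $s \circ \rho : \pi_1(\Sigma) \to M(X)$ of $\rho$ (since the composite with $M(X) \to \Gamma(X)$ recovers $\rho$). Hence it suffices to exhibit a single pair $(\Sigma, \rho)$ satisfying the hypotheses of Theorem~\ref{thm:nolift}, i.e. $w_2(H^+_\rho) \neq 0$ together with a $\rho$-invariant characteristic vector $c$ with $c^2 > \sigma(X)$; the theorem then forbids a lift, contradicting the existence of $s$.

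I would take $\Sigma = \mathbb{RP}^2$, so $\pi_1(\Sigma) = \mathbb{Z}_2$ and a homomorphism $\rho$ is the same as an involution $A \in \Gamma(X) = Aut(L)$. I look for $A$ of the form $A = R_{\delta_1} R_{\delta_2}$, where $\delta_1, \delta_2 \in L$ are orthogonal vectors of square $2$ and $R_{\delta_i}$ is the (integral) reflection in $\delta_i$. Such an $A$ acts as $-\mathrm{id}$ on the positive-definite plane $V = \langle \delta_1, \delta_2 \rangle_{\mathbb{R}}$ and as the identity on $V^\perp$. Because $b_+(X) = 2$, the plane $V$ is a maximal positive subspace, so the associated flat bundle $H^+_\rho$ over $\mathbb{RP}^2$ is the bundle with monodromy $-\mathrm{id} \in O(2)$, namely $\gamma \oplus \gamma$ where $\gamma$ is the tautological line bundle. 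Thus $w(H^+_\rho) = (1+\alpha)^2 = 1 + \alpha^2$ and $w_2(H^+_\rho) = \alpha^2 \neq 0$ in $H^2(\mathbb{RP}^2 ; \mathbb{Z}_2)$. Since $A$ fixes exactly $V^\perp$, any $c \in V^\perp \cap L$ is automatically $\rho$-invariant.

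The heart of the argument — and the step I expect to be the main obstacle — is the lattice-theoretic task of producing $\delta_1, \delta_2$ and $c$ simultaneously. The key constraint is that $c$ cannot have positive square: a $\rho$-invariant vector of positive square would span a trivial positive line subbundle of $H^+_\rho$ and force $w_2(H^+_\rho) = 0$. So I need a characteristic $c$ with $\sigma(X) < c^2 \le 0$ lying in $V^\perp$, i.e. with $c \perp \delta_1, \delta_2$. When $L$ is even one may simply take $c = 0$, which is characteristic with $c^2 = 0 > \sigma(X)$, and find $\delta_1, \delta_2$ inside a hyperbolic summand $2H \subseteq L$. When $L$ is odd I first choose a characteristic vector $c$ with $c^2 = \sigma(X) + 8$, which lies in $(\sigma(X), 0)$ since $\sigma(X) \le -9$. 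The crucial observation is that for any characteristic $c$ the complement $c^\perp$ is \emph{even}: if $x \perp c$ then $x^2 \equiv c \cdot x = 0 \bmod 2$. Hence $c^\perp$ is an even indefinite lattice of signature $(2, b_-(X)-1)$ and rank $\ge 12$, and by the structure theory of indefinite lattices (Eichler) it splits off a hyperbolic plane repeatedly, $c^\perp \cong 2H \oplus N$ with $N$ negative definite; an orthogonal square-$2$ pair $\delta_1, \delta_2$ inside $2H \subseteq c^\perp$ then gives the required $A$, fixing $c$ and with $w_2(H^+_\rho) \neq 0$.

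With $\Sigma = \mathbb{RP}^2$, $\rho(a) = A$, and $c$ as above, both hypotheses of Theorem~\ref{thm:nolift} are met, so $\rho$ does not lift to $M(X)$, contradicting the existence of $s$ and completing the proof. In writing this up I would check the borderline case $b_-(X') = 10$ (so $\sigma(X) = -9$) with extra care, since there $c^2 = -1$ is the only admissible value; primitivity of $c$ then makes $c^\perp$ even \emph{unimodular} of signature $(2,10)$, forcing $c^\perp \cong 2H \oplus (-E_8)$, which visibly contains the needed orthogonal square-$2$ pair.
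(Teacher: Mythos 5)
Your strategy coincides with the paper's: feed Theorem \ref{thm:nolift} a homomorphism $\rho : \pi_1(\mathbb{RP}^2) \to \Gamma(X)$ whose image is generated by a product of reflections in two orthogonal square-$2$ vectors spanning a maximal positive-definite plane (so $w_2(H^+_\rho) \neq 0$), fixing a characteristic vector $c$ with $\sigma(X) < c^2 \le 0$, and observe that a splitting $\Gamma(X) \to M(X)$ would lift $\rho$. The paper realises this data explicitly: since $b_+(X') = 1$ and $\sigma(X') < 0$ force $X'$ to be non-spin, $L$ is odd and indefinite, hence $L \cong 2H \oplus E_8 \oplus m\langle -1 \rangle$ with $m = b_-(X') - 9 \ge 1$, and one takes $u = x+y$, $v = x'+y'$ in the two hyperbolic summands and $c = e_1 + \cdots + e_m$, which are visibly mutually orthogonal.

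The gap is in the step where you produce $\delta_1, \delta_2$ inside $c^\perp$. The principle you appeal to --- that an even indefinite lattice of signature $(2, b_-(X)-1)$ and rank $\ge 12$ splits off hyperbolic planes ``by the structure theory of indefinite lattices (Eichler)'' --- is false as stated: the lattice $\langle 2 \rangle^{\oplus 2} \oplus \langle -2 \rangle^{\oplus 10}$ is even, indefinite, of signature $(2,10)$, yet contains no copy of $H$ at all, since every inner product in it is even. Indefiniteness plus rank never suffices; one needs control of the discriminant group. What rescues your argument is a property of $c^\perp$ you never record: writing $c = k c_0$ with $c_0$ primitive, one has $c^\perp = c_0^\perp$, whose discriminant group $A$ is cyclic of order $|c_0^2|$, so $l(A) \le 1$ (minimal number of generators). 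Nikulin's splitting criterion --- an even indefinite lattice of rank $\ge 3 + l(A)$ splits off $H$ --- then applies, and applies again after the first hyperbolic summand is removed, giving $c^\perp \supseteq 2H$. You also do not verify that a characteristic $c$ with $c^2 = \sigma(X) + 8$ exists, though this is easy (in $\langle 1 \rangle^{\oplus 2} \oplus \langle -1 \rangle^{\oplus b_-(X)}$ take $c = 3f_1 + f_2 + g_1 + \cdots + g_{b_-(X)}$, which is moreover primitive). With these two supplements your proof is correct; alternatively, the paper's explicit decomposition avoids any appeal to classification theory. It is worth noting that your borderline case $b_-(X') = 10$ is handled correctly and is precisely the one case where no discriminant hypothesis is needed, since $c^2 = -1$ makes $c^\perp$ even unimodular of signature $(2,10)$.
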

\begin{proof}
Let $L = H^2(X ; \mathbb{Z})$, $L' = H^2(X' ; \mathbb{Z})$ denote the intersection lattices of $X$ and $X'$ and let $H = H^2(S^2 \times S^2 \; \mathbb{Z})$. So $L \cong H \oplus L'$. Since $X = (S^2 \times S^2) \# X'$, we have that $\Gamma(X) = Aut( H^2(X ; \mathbb{Z}))$ by \cite{wall}. Let $x,y \in H$ be a basis with $x^2 = y^2 = 0$, $\langle x , y \rangle = 1$. Since $b_+(X') = 1$ and $\sigma(X') < 0$, it follows that $X'$ is not spin and it follows that $L' \cong H' \oplus E_8 \oplus L''$, where $H'$ has basis $x',y'$, $(x')^2 = (y')^2 = 0$, $\langle x' , y' \rangle = 1$, $E_8$ is the negative definite $E_8$ lattice and $L''$ is a diagonal lattice with basis $e_1 , \dots , e_m$, where $m = b_-(X')-9$, with $e_i^2 = -1$ for all $i$, $\langle e_i , e_j \rangle = 0$ for $i \neq j$.

Let $u = x+y$, $v = x'+y'$. Then $u^2 = v^2 = 2$, $\langle u , v \rangle = 0$. Let $r_u$ be the reflection $r_u(x) = x + \langle x , u \rangle u$ and define $r_v$ similarly. Consider the isometry $f(x) = r_u r_v(x)$. Then $f \in \Gamma(X)$ and $f^2 = 1$. Hence we obtain a homomorphism $\rho : \pi_1(\mathbb{RP}^2) \to \Gamma(X)$ which sends the generator of $\pi_1(\mathbb{RP}^2)$ to $f$. Since $f$ acts as $-1$ on the maximal positive definite subspace of $H^2(X ; \mathbb{R})$ spanned by $u$ and $v$, we have that $w_2( H^+_{\rho}) \neq 0$. Let $c = e_1 +  \dots + e_m$. Then $c$ is a characteristic that $c^2 > \sigma(X)$ and $\langle c , u \rangle = \langle c , v \rangle = 0$. Then $r_u(c) = r_v(c) = c$ and hence $f(c) = c$. Then Theorem \ref{thm:nolift} implies that $\rho$ does not lift to $M(X)$. Hence the subgroup $\langle f \rangle \subseteq \Gamma(X)$ does not lift to $M(X)$, in particular, there does not exist a splitting $\Gamma(X) \to M(X)$.
\end{proof}

\begin{remark}
Note that in the above proof $u \in L$ can be realised by an embedded $2$-sphere in $X$, namely the diagonal $S^2 \subset S^2 \times S^2$. By a result of Seidel \cite{sei}, it follows that $r_u$ can be lifted to an element $\hat{r}_u \in M(X)$ of order $2$. Since $\Gamma(X) = Aut(L)$, it follows that there is a diffeomorphism of $X$ sending $u$ to $v$. It follows that $v$ can also be realised by an embedded $2$-sphere and hence $r_v$ can be lifted to an element $\hat{r}_v \in M(X)$ of order $2$. Then $\hat{r}_u \hat{r}_v$ is a lift of $f$ to $M(X)$. If $u,v$ could be represented by {\em disjoint} embedded $2$-spheres, then $\hat{r}_u, \hat{r}_v$ commute (since $\hat{r}_u, \hat{r}_v$ can be constructed to have disjoint supports) and then $\hat{r}_u \hat{r}_v$ would be an involutive lift of $f$, contradicting the corollary above. We deduce that $u,v$ can be represented by embedded spheres, but they can not be represented by disjoint embedded spheres even though $\langle u , v \rangle = 0$.
\end{remark}

\section{Nielsen realisation}\label{sec:nielsen}

As explained in the introduction, the following result shows that the Nielsen realisation problem fails for $X = X' \# p \mathbb{CP}^2 \# q \overline{\mathbb{CP}^2}$ whenever $p+q \ge 4$.

\begin{theorem}
Let $X = X' \# p \mathbb{CP}^2 \# q \overline{\mathbb{CP}^2}$ where $X'$ is a compact, smooth, simply-connected $4$-manifold and $p+q \ge 4$. Then $M(X)$ contains a subgroup isomorphic to $\mathbb{Z}_2^4$ which can not be lifted to $Diff(X)$.
\end{theorem}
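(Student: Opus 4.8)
The plan is to reduce the Nielsen realisation failure to an obstruction living on a family over $\mathbb{RP}^2\times\mathbb{RP}^2$, detected by the families Seiberg--Witten theory of \cite{bar1}, exactly as in the non-split extension results of Section \ref{sec:nsplit}. The first step is to exhibit the abstract group $\mathbb{Z}_2^4$ inside $M(X)$. Since $p+q\ge 4$, the connected sum $X$ contains at least four summands each of which is $\mathbb{CP}^2$ or $\overline{\mathbb{CP}^2}$; the hyperplane class of each such summand is a characteristic vector $e_i$ with $e_i^2=\pm 1$, represented by a smoothly embedded $2$-sphere. By the result of Seidel \cite{sei} (used already in the remark following the previous corollary), the reflection $r_{e_i}$ in each such sphere lifts to an involution $\hat r_i\in M(X)$. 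One should then select four of these summands whose spheres can be arranged to have \emph{pairwise disjoint} supports, so that the $\hat r_i$ mutually commute; this produces a subgroup $G=\langle \hat r_1,\hat r_2,\hat r_3,\hat r_4\rangle\cong\mathbb{Z}_2^4\subseteq M(X)$. (One must check the $\hat r_i$ are genuinely of order $2$ and that distinct reflections give independent generators of $\mathbb{Z}_2^4$; this is immediate on the level of the induced isometries of $L_X$, since the $r_{e_i}$ generate a $\mathbb{Z}_2^4$ inside $Aut(L_X)$.)

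Next I would suppose, toward a contradiction, that $G$ lifts to a subgroup $\widetilde G\subseteq Diff(X)$, i.e.\ that there are commuting \emph{honest} diffeomorphisms $g_1,\dots,g_4$ (not merely up to isotopy) realising the four reflections, each an involution. The key is to combine the generators in pairs to produce a free $\mathbb{Z}_2\times\mathbb{Z}_2$ action detected by a closed surface. Concretely, pick two of the reflections, say those in a positive pair $u,v$ of classes (as in the previous corollary, arrange a sublattice $\langle u,v\rangle$ with $u^2=v^2>0$ on which the product $r_ur_v$ acts as $-1$ on a maximal positive-definite subspace, so that $w_2(H^+_\rho)\ne 0$ for the associated representation). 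An honest commuting action of $\mathbb{Z}_2\times\mathbb{Z}_2$ realising two such reflections yields, via the standard clutching/mapping-torus construction over the two generating circles together with the chosen commuting null-isotopies, a smooth family $\pi:E\to \mathbb{RP}^2$ (or over a product of projective planes) whose $R^2\pi_*\mathbb{Z}$ has the prescribed monodromy $\rho:\pi_1(\Sigma)\to\Gamma(X)$.

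The heart of the argument is then to invoke Theorem \ref{thm:nolift}: one needs $b_+$ restricted to the relevant summand to equal $2$ so that the single-chamber families invariant of \cite[Theorem 1.1]{bar1} applies, a $\rho$-invariant characteristic $c$ with $c^2>\sigma$, and $w_2(H^+_\rho)\ne 0$. Since $p+q\ge 4$ and $b_+(S^2\times S^2)$-type positive classes are available, one can choose $u,v$ spanning a positive-definite plane (giving $b_+$ at least $2$ in the relevant subfamily) and choose $c$ to be a sum of the remaining $e_i$'s orthogonal to $u$ and $v$, so that $c$ is $\rho$-fixed and $c^2>\sigma(X)$; this is precisely the configuration of the preceding corollary. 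The main obstacle, and the step deserving the most care, is the \textbf{disjointness and honest-involution bookkeeping}: one must guarantee that four of the reflections can be realised by genuinely commuting diffeomorphisms with the right geometric supports, and that the pairing producing the positive plane $\langle u,v\rangle$ with $w_2(H^+_\rho)\ne 0$ is compatible with the chosen $\mathbb{Z}_2^4$. Once a single $\mathbb{Z}_2\times\mathbb{Z}_2$ subaction violating the hypotheses of Theorem \ref{thm:nolift} is extracted, the contradiction is immediate, and hence no lift of $G$ to $Diff(X)$ can exist.
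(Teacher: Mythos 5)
Your first step coincides with the paper's: the Dehn twists on four disjoint $\pm 1$-spheres are commuting involutions in $M(X)$ by \cite{sei}, giving $G \cong \mathbb{Z}_2^4 \subseteq M(X)$. But your second step --- deriving the contradiction from a hypothetical lift to $Diff(X)$ via Theorem \ref{thm:nolift} and \cite[Theorem 1.1]{bar1} --- cannot work, for a structural reason. Theorem \ref{thm:nolift} is an obstruction to lifting a surface-group representation $\rho : \pi_1(\Sigma) \to \Gamma(X)$ to $M(X)$: its proof needs only representative diffeomorphisms of the generators and an isotopy trivialising the relator, i.e.\ exactly a lift to $M(X)$, in order to build the family $E \to \Sigma$. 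In your situation $G$ lifts to $M(X)$ by construction, so every homomorphism $\pi_1(\Sigma) \to G$ already produces a smooth family over $\Sigma$ with the prescribed monodromy; consequently, by \cite[Theorem 1.1]{bar1} itself, the hypotheses of Theorem \ref{thm:nolift} ($b_+ = 2$, $w_2(H^+_\rho) \neq 0$, and a $\rho$-invariant characteristic with $c^2 > \sigma(X)$) can never be satisfied by any $\rho$ factoring through $G$ --- otherwise the existence of $G$ inside $M(X)$ would already be contradictory. A families invariant over a surface base cannot distinguish ``lifts to $M(X)$'' from ``lifts to $Diff(X)$''; the failure of Nielsen realisation needs an obstruction that genuinely uses the honest action (your parenthetical suggestion of a base $\mathbb{RP}^2 \times \mathbb{RP}^2$ points in that direction, but that is not a surface, so the cited theorems do not apply to it, and restricting to surfaces inside it loses the information again).

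Two of the needed hypotheses also fail outright. First, $b_+(X)$ is arbitrary here (e.g.\ $X = 4\mathbb{CP}^2$ has $b_+ = 4$), while Theorem \ref{thm:nolift} requires $b_+(X) = 2$ for the fibre manifold itself; ``$b_+$ restricted to the relevant summand'' has no meaning in the families setup. Second, in the corollary of Section \ref{sec:nsplit} the classes $u,v$ have square $2$ and lie in hyperbolic summands, which is why a characteristic vector orthogonal to both exists; your generators are instead reflections in classes $e_i$ with $e_i^2 = \pm 1$, and every characteristic vector pairs oddly --- in particular non-trivially --- with each $e_i$, so no characteristic element is invariant under any $r_{e_i}$, nor under any product $r_{e_i} r_{e_j}$. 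The paper's actual argument is fixed-point-theoretic, not gauge-theoretic: if commuting involutions $\sigma_1, \dots, \sigma_4$ realised $G$ in $Diff(X)$, then by \cite[Proposition 2.4]{ed} the fixed-point set of $\sigma_1$ contains a unique copy of $\mathbb{RP}^2$, on which $\langle \sigma_2, \sigma_3, \sigma_4 \rangle \cong \mathbb{Z}_2^3$ must act effectively; but a finite group acting smoothly and effectively on $\mathbb{RP}^2$ is conjugate into $SO(3)$, whose elementary abelian $2$-subgroups have order at most $4$. You would need to replace your Seiberg--Witten step by an argument of this kind (or by a families obstruction over a higher-dimensional base that genuinely requires the honest action) to close the gap.
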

\begin{proof}
To each summand of $\mathbb{CP}^2$ or $\overline{\mathbb{CP}^2}$ in $X$, there is a corresponding embedded $2$-sphere of self-intersection $\pm 1$. Let $E_1, \dots , E_4$ be any four of them. Let $t_1, \dots , t_4 \in M(X)$ be the corresponding Dehn twists around these spheres. Then $t_1,\dots , t_4$ are involutons \cite{sei} and they commute since $E_1, \dots , E_4$ are disjoint. Hence the group $G \subseteq M(X)$ generated by $t_1, \dots , t_4$ is isomorphic to $\mathbb{Z}_2^4$. Now suppose that $G$ can be lifted to $Diff(X)$. Hence we can find commuting diffeomorphisms $\sigma_1, \dots , \sigma_4$ such that the isotopy class of $\sigma_i$ is $t_i$.

Consider the fixed point set $F$ of $\sigma_1$. Since $\sigma_1$ acts on $H^2(X ; \mathbb{Z})$ as a reflection in a $\pm 1$ sphere, it follows from \cite[Proposition 2.4]{ed} that $F$ consists of a single copy of $\mathbb{RP}^2$, together with some isolated points and some $2$-spheres. Let $G_0$ be the subgroup of $G$ generated by $\sigma_2, \sigma_3, \sigma_4$. Since $\sigma_2, \sigma_3, \sigma_4$ commute with $\sigma_1$, they act on $F$ and in particular must send the copy of $\mathbb{RP}^2$ to itself. Hence $G_0$ acts on $\mathbb{RP}^2$. We claim that the action is effective. To see this, suppose $f \in G_0$ fixes $\mathbb{RP}^2$ pointwise. Since $f$ is an orientation preserving involution, it must act on the normal bundle of $\mathbb{RP}^2$ in $X$ as either the identity or multiplication by $-1$. Hence either $f$ or $\sigma_1 f$ fixes $\mathbb{RP}^2$ pointwise and acts trivially on the normal bundle. For a diffeomorphism of finite order, this can only happen if the diffeomorphism is the identity. Hence $f$ or $\sigma_1 f$ is the identity, but $f \in G_0$, so $f \neq \sigma_1$ and it must be that $f$ is the identity.

A finite group action on $\mathbb{RP}^2$ by diffeomorphisms is conjugate to a subgroup of $PO(3) \cong SO(3)$. Since $G_0$ is abelian, its action on the standard representation of $SO(3)$ can be simultaneously diagonalised, so $G_0$ is isomorphic to a subgroup of $\{ diag( \epsilon_1 , \epsilon_2 , \epsilon_3 ) \in SO(3) \} \cong \mathbb{Z}_2^2$, which is impossible since $|G_0| = 8$. So $G$ does not lift to $Diff(X)$.
\end{proof}

\section{Boundary Dehn twists}\label{sec:bdt}

Let $X^{(n)}$ be obtained from $X$ by removing $n$ disjoint open balls. So $X^{(n)}$ is a compact $4$-manifold with boundary consisting of $n$ copies of $S^3$. Let $Diff(X^{(n)} , \partial X^{(n)})$ denote the group of diffeomorphisms of $X^{(n)}$ which are the identity in a neighbourhood of the boundary. Let $M_n(X) = \pi_0( Diff( X^{(n)} , \partial X^{(n)} )$ denote the group of components of $Diff( X^{(n)} , \partial X^{(n)})$. It is known that the map $M_n(X) \to M(X)$ is surjective and that the kernel is generated by Dehn twists on the boundary components \cite{gir}. More precisely, if $S^3 \to X^{(n)}$ is a boundary component, then $X^{(n)}$ has a tubular neighbourhood $[0,1] \times S^3 \to X$. The Dehn twist on this boundary component is defined by taking a non-trivial loop $\alpha_t :  [0,1] \to SO(4)$ and defining $\phi : [0,1] \times S^3 \to [0,1] \times S^3$ by $\phi(t,x) = (t , \alpha_t(x))$, where $SO(4)$ acts on $S^3$ in the standard way. We assume that $\alpha_t$ is smooth and equals the identity in a neighbourhood of $\{ 0,1\}$, hence $\phi$ can be extended to an element of $Diff(X^{(n)} , \partial X^{(n)})$ by taking it to be the identity outside of the tubular neighbourhood.

Let $K_n(X)$ denote the kernel of $M_n(X) \to M(X)$, so we have an short exact sequence
\[
1 \to K_n(X) \to M_n(X) \to M(X) \to 1.
\]
Furthermore, we have a surjection $\mathbb{Z}_2^n \to K_n(X)$ given by Dehn twists on the boundary components (\cite[Proposition 3.1]{gir}).

\begin{proposition}\label{prop:bdt}
Let $X$ be a compact, smooth, simply-connected $4$-manifold.
\begin{itemize}
\item[(1)]{If $X$ is spin, then $K_n(X)$ is either $\mathbb{Z}_2^n$ or $\mathbb{Z}_2^n/\Delta \mathbb{Z}_2$, for all $n$, where $\Delta \mathbb{Z}_2$ is the diagonal copy of $\mathbb{Z}_2$.}
\item[(2)]{If $X$ is not spin, then $K_n(X) = 0$ for all $n$, hence $M_n(X) \cong M(X)$.}
\end{itemize}
\end{proposition}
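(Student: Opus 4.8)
The plan is to realise $K_n(X)$ as the image of a connecting homomorphism in the long exact homotopy sequence of an evaluation fibration, and then to compute the two terms that control this image. I would fix $n$ distinct points $p_1,\dots,p_n \in X$ and consider the evaluation map
\[
ev : Diff^+(X) \to \mathrm{Fr}_n^+(X), \qquad f \mapsto \big( (f(p_1), df|_{p_1}), \dots, (f(p_n), df|_{p_n}) \big),
\]
where $\mathrm{Fr}_n^+(X)$ is the total space of the oriented frame bundle restricted to the ordered configuration space $\mathrm{Conf}_n(X)$. By the isotopy extension theorem $ev$ is a fibration whose fibre is homotopy equivalent to $Diff(X^{(n)}, \partial X^{(n)})$, so $\pi_0$ of the fibre is $M_n(X)$ while $\pi_0(Diff^+(X)) = M(X)$. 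The relevant part of the long exact sequence is
\[
\pi_1(Diff^+(X)) \xrightarrow{ev_*} \pi_1(\mathrm{Fr}_n^+(X)) \xrightarrow{\partial} M_n(X) \to M(X),
\]
whence $K_n(X) = \mathrm{im}(\partial) \cong \pi_1(\mathrm{Fr}_n^+(X))/\mathrm{im}(ev_*)$. Since $\mathrm{Conf}_n(X)$ is simply connected (removing points from a simply-connected $4$-manifold does not affect $\pi_1$), the group $\pi_1(\mathrm{Fr}_n^+(X))$ is generated by the $n$ loops $g_1,\dots,g_n$ coming from $\pi_1(SO(4)^n)$, and $\partial(g_i)$ is the $i$-th boundary Dehn twist $\delta_i$. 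Everything thus reduces to computing $\pi_1(\mathrm{Fr}_n^+(X))$ and $\mathrm{im}(ev_*)$.

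For $\pi_1(\mathrm{Fr}_n^+(X))$ I would use the homotopy sequence of the $SO(4)^n$-bundle $\mathrm{Fr}_n^+(X) \to \mathrm{Conf}_n(X)$, giving
\[
\pi_2(\mathrm{Conf}_n(X)) \xrightarrow{w} \pi_1(SO(4)^n) = \mathbb{Z}_2^n \to \pi_1(\mathrm{Fr}_n^+(X)) \to 0 .
\]
The key computation is that $w$ is the self-intersection pairing: an element of $\pi_2(\mathrm{Conf}_n(X))$ is given by $n$ spheres $\beta_i : S^2 \to X$ along which the points travel, and its $i$-th coordinate is the clutching class of the oriented bundle $\beta_i^*TX$ in $\pi_1(SO(4)) = \mathbb{Z}_2$, namely $\langle w_2(X), \beta_i \rangle = \beta_i \cdot \beta_i \pmod 2$ by Wu's formula. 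This settles the non-spin case (2): choosing $\beta_i$ to represent a class of odd self-intersection, perturbed to avoid the other points while keeping the configuration distinct, realises each generator $g_i$ in $\mathrm{im}(w)$, so $\pi_1(\mathrm{Fr}_n^+(X)) = 0$ and hence $K_n(X) = 0$.

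In the spin case (1) we have $w_2(X) = 0$, so $w = 0$ and $\pi_1(\mathrm{Fr}_n^+(X)) = \mathbb{Z}_2^n$, with $\partial$ identifying this with the given surjection $\mathbb{Z}_2^n \to K_n(X)$. It remains to show $\mathrm{im}(ev_*) \subseteq \Delta\mathbb{Z}_2$, the diagonal. Given a loop $f_t$ of diffeomorphisms based at the identity, its $i$-th coordinate in $\mathbb{Z}_2^n$ is the class in $\pi_1(\mathrm{Fr}^+(X)) = \pi_1(SO(4)) = \mathbb{Z}_2$ of the frame loop $t \mapsto (f_t(p_i), df_t|_{p_i})$. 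Dragging the basepoint along a path from $p_i$ to $p_j$ produces a free homotopy between the frame loops at $p_i$ and $p_j$, so these classes agree for all $i,j$; as $X$ is simply connected this common value is a well-defined element of $\mathbb{Z}_2$, whence $\mathrm{im}(ev_*) \subseteq \Delta\mathbb{Z}_2 \cong \mathbb{Z}_2$. Since $\Delta\mathbb{Z}_2$ has only the subgroups $0$ and itself, we conclude $K_n(X) = \mathbb{Z}_2^n$ or $\mathbb{Z}_2^n/\Delta\mathbb{Z}_2$, as claimed.

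The main obstacle is the bookkeeping at the configuration-space level: identifying $w$ with the self-intersection pairing (and verifying that one point can be dragged around an odd sphere while the others remain fixed and distinct), and, in the spin case, upgrading $\mathrm{im}(ev_*) \subseteq \mathbb{Z}_2^n$ to the coherence statement $\mathrm{im}(ev_*) \subseteq \Delta\mathbb{Z}_2$. I emphasise that this argument deliberately does \emph{not} decide which of the two spin alternatives occurs: distinguishing $\mathrm{im}(ev_*) = 0$ from $\mathrm{im}(ev_*) = \Delta\mathbb{Z}_2$, equivalently deciding whether the boundary Dehn twist is trivial, is precisely the delicate point settled for $K3$ by the Seiberg--Witten arguments in Section \ref{sec:bdt}.
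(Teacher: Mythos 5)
Your argument is correct, but it takes a genuinely different route from the paper: the paper's proof of Proposition \ref{prop:bdt} consists entirely of citations, part (1) to \cite[Corollary 2.5]{gir} and part (2) to \cite[Corollary A.5]{op}, whereas you give a self-contained derivation. Your evaluation fibration $Diff^+(X) \to \mathrm{Fr}^+_n(X)$ is the $n$-point version of the fibration (\ref{equ:fib}) that the paper exploits only for $n=1$ (via $Emb(D^4,X) \simeq F(X)$), and the mechanism you isolate --- that the connecting map $\pi_2(\mathrm{Conf}_n(X)) \to \pi_1(SO(4)^n) = \mathbb{Z}_2^n$ is evaluation of $w_2(TX)$, hence mod $2$ self-intersection by Wu's formula --- is exactly where the spin/non-spin dichotomy enters; it is also the mechanism underlying the results the paper cites. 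Your route buys a unified treatment of both parts and makes transparent why, in the spin case, at most the diagonal $\Delta\mathbb{Z}_2$ can die: the basepoint-dragging argument (valid because $\pi_1(\mathrm{Fr}^+(X)) \cong \mathbb{Z}_2$ is abelian, so free homotopy classes of loops give well-defined elements) forces $\mathrm{im}(ev_*) \subseteq \Delta\mathbb{Z}_2$. What the citations buy, besides brevity, is careful treatment of the two points you assert as standard: (i) that $ev$ is a fibration whose fibre is homotopy equivalent to $Diff(X^{(n)}, \partial X^{(n)})$ compatibly with the maps to $Diff^+(X)$ --- this needs a linearization/collar argument beyond the isotopy extension theorem, since the literal fibre is the group of diffeomorphisms fixing the points and their derivatives; and (ii) that the connecting homomorphism sends the generator of the $i$-th $\pi_1(SO(4))$ factor to the $i$-th boundary Dehn twist $\delta_i$. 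Both are standard and are proved in \cite{gir} and \cite{op}, so your argument stands once you either cite them for these steps or supply the proofs.
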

\begin{proof}
Part (1) is given by \cite[Corollary 2.5]{gir} and part (2) by \cite[Corollary A.5]{op}.
\end{proof}

In light of Proposition \ref{prop:bdt}, boundary Dehn twists are only interesting when $X$ is spin. In this case, we either have $K_n(X) \cong \mathbb{Z}_2^n$ or $K_n(X) \cong \mathbb{Z}_2^n/\Delta \mathbb{Z}_2$. Which of these two cases occurs is completely determined by the $n=1$ case. We consider this case in more detail. There is a Serre fibration 
\begin{equation}\label{equ:fib}
Diff( X^{(1)} , \partial X^{(1)} ) \to Diff(X) \to Emb\left( D^4 , X \right)
\end{equation}
where $Emb\left( D^4 , X \right)$ is the space of embeddings of a disc in $X$ which can be extended to a diffeomorphism. Furthermore, there is a homotopy equivalence $Emb\left( D^4 , X \right) \cong F(X)$, where $F(X)$ is the oriented frame bundle of $X$ \cite{gir}. Since $X$ is simply-connected and spin, $\pi_1(F(X)) \cong \mathbb{Z}_2$. Then the fibration (\ref{equ:fib}) induces an exact sequence
\[
\pi_1(Diff(X)) \buildrel \phi \over \longrightarrow \mathbb{Z}_2 \to M_1(X) \to M(X) \to 1.
\]
In the absence of a metric we can define the spin bundle of $X$ to be the universal cover $\widetilde{F}(X) \to F(X)$ of $F(X)$. Since $\pi_1(F(X)) \cong \mathbb{Z}_2$, $\widetilde{F}(X) \to F(X)$ is a double cover. Since $Emb\left( D^4 , X \right) \cong F(X)$, it follows that $\phi$ is the map that measures whether or not a loop of diffeomorphisms of $X$ lifts to a loop in the spin bundle of $X$. This leads to an alternative description of the group $M_1(X)$ when $X$ is spin. Let $SpinDiff(X)$ be the group whose elements consist of a diffeomorphism $f \in Diff(X)$ and a choice of lift of $f_* : F(X) \to F(X)$ to $\widetilde{F}(X)$. We have a short exact sequence $1 \to \mathbb{Z}_2 \to SpinDiff(X) \to Diff(X) \to 1$ and the connecting homomorphism $\pi_1(Diff(X)) \to \mathbb{Z}_2$ is precisely $\phi$. The map $Diff(X^{(1)} , \partial X^{(1)}) \to Diff(X)$ admits a lift $Diff(X^{(1)} , \partial X^{(1)} ) \to SpinDiff(X)$ by taking the unique lift which is the identity over $\partial X^{(1)}$. We then have a commutative diagram
\[
\xymatrix{
\pi_1(Diff(X)) \ar[r]^-{\phi} \ar[dr]_-{\phi} & \mathbb{Z}_2 \ar[r] \ar[d] & M_1(X) \ar[r] \ar[d] & M(X) \\
& \mathbb{Z}_2 \ar[r] & \pi_0(SpinDiff(X)) \ar[ur] & 
}
\]
from which it follows that $M_1(X) \to \pi_0( SpinDiff(X) )$ is an isomorphism. If $\phi$ is non-trivial, then $K_1(X) = 0$ and $M_1(X) \to M(X)$ is an isomorphism. This happens for $S^2 \times S^2$, as seen by taking a loop of diffeomorphisms given by a circle action which rotates one of the spheres. Similarly, $\phi$ is non-trivial for $X = S^4$ or for a connected sum of copies of $S^2 \times S^2$. If $\phi$ is trivial, then $K_1(X) \cong \mathbb{Z}_2$ and $M_1(X) \to M(X)$ is an extension of $M(X)$ by $\mathbb{Z}_2$, hence corresponds to a class $\xi_X \in H^2( M(X) ; \mathbb{Z}_2)$. It is natural to ask what this class is and in particular, whether or not it is trivial. First, we need some examples of spin $4$-manifolds where $\phi = 0$.

\begin{theorem}\label{thm:phi}
Let $X$ be a compact, smooth, simply-connected $4$-manifold. If $X$ is homeomorphic to $K3$ then $\phi = 0$. Similarly, if $X = X' \# (S^2 \times S^2)$, where $X'$ is homeomorphic to $K3$, then $\phi = 0$.
\end{theorem}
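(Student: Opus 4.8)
The plan is to show that the connecting homomorphism $\phi : \pi_1(Diff(X)) \to \mathbb{Z}_2$ is trivial by exhibiting, for every loop of diffeomorphisms, an obstruction-free lift to the spin bundle $\widetilde{F}(X)$. Equivalently, by the identification $Emb(D^4,X) \simeq F(X)$ established above, I must show that every loop in $Diff(X)$, when applied to a fixed frame, traces out a loop in $F(X)$ that is null-homotopic in the fibre direction, i.e.\ lifts to $\widetilde{F}(X)$. The key geometric input is that $X$ is homeomorphic (hence by Freedman has the same oriented homotopy type and, crucially for us, the same stable tangential data) to $K3$, which is spin with $b_+(X) = 3$. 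The natural strategy is to connect this topological statement to families Seiberg--Witten theory: a non-trivial $\phi$ would produce, via the mapping torus construction of a loop of diffeomorphisms, a family over $S^1$ (or over a closed surface after capping) whose total space is \emph{not} spin even though each fibre is, and this failure to be spin is exactly measured by $w_2$ of the associated $H^+$ bundle.

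First I would make precise the reformulation: $\phi$ evaluated on a loop $\gamma_t \in Diff(X)$ equals the obstruction to lifting the induced loop $t \mapsto (\gamma_t)_* \in \pi_1(F(X)) = \mathbb{Z}_2$ to the double cover $\widetilde{F}(X)$. I would then observe that this obstruction is the same as $w_2$ of the vertical tangent bundle of the mapping torus $E_\gamma \to S^1$, restricted appropriately; since each fibre is spin and $S^1$ is one-dimensional, the only possible obstruction lives in $H^2$ of the total space and is detected by whether the family admits a fibrewise spin structure varying continuously around the loop. Next I would invoke the structural results already available: for $X$ homeomorphic to $K3$, the relevant homeomorphism (Freedman--Quinn) combined with the fact that $w_2(TX) = 0$ ensures the fibrewise spin structures exist and the monodromy on the set of spin structures is trivial, because on a simply-connected spin $4$-manifold the spin structure is unique. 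The uniqueness of the spin structure is the decisive point: it forces any lift of the monodromy to respect the unique spin structure, and this is precisely what kills $\phi$.

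The main obstacle I anticipate is disentangling the two distinct ``spin'' phenomena at play — the spin structure on $X$ itself (which is unique and hence gives no monodromy) versus the possible non-triviality of the loop in $\pi_1(SO(4))$ or $\pi_1(F(X))$ coming from the \emph{framing} rather than from the manifold's own topology. Concretely, $\phi$ could be non-zero even when $X$ is spin if there is a loop of diffeomorphisms that rotates a frame by a non-contractible loop in $SO(4)$; this is exactly what happens for $S^2 \times S^2$ via the circle action, where $\phi \neq 0$. So the heart of the argument must explain why $K3$ (and $K3 \# (S^2 \times S^2)$) admits \emph{no} such loop realising the non-trivial element of $\pi_1(F(X))$. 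I would handle this by contradiction: if $\phi$ were non-trivial, I could choose a loop $\gamma$ with $\phi(\gamma) \neq 0$ and use it together with a suitable family over $\mathbb{RP}^2$ or a surface to build a total space violating the Bauer--Furuta or families Seiberg--Witten constraint, invoking the non-vanishing Seiberg--Witten invariant of $K3$ (which persists under homeomorphism-type hypotheses via the fact that $SW(K3) \neq 0$) to obstruct the existence of such a loop.

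For the connected sum case $X = X' \# (S^2 \times S^2)$, I would reduce to the previous case using the gluing formula of Proposition \ref{prop:glue} together with the observation that adding an $S^2 \times S^2$ summand does not introduce new spin-structure monodromy, since the spin structure on the connected sum remains unique by simple-connectivity. The subtlety here is that $S^2 \times S^2$ on its own has $\phi \neq 0$, so one must verify that the $S^2 \times S^2$ factor's potential framing loop does not survive in the connected sum; I expect this to follow because any such loop, when glued to $X'$ homeomorphic to $K3$, can be detected or obstructed by the same families Seiberg--Witten invariant that controls the $K3$ piece, the non-triviality of $SW(K3)$ again providing the crucial obstruction. The cleanest formulation will likely assert that $\phi = 0$ is equivalent to the vanishing of a specific characteristic class of the universal family, and then compute that class directly using the known cohomology and Seiberg--Witten data of $K3$.
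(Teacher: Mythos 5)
For the first statement ($X$ homeomorphic to $K3$) your eventual plan — contradiction via a clutched family and a families Seiberg--Witten constraint powered by the odd Seiberg--Witten invariant of homotopy $K3$'s — is essentially the paper's argument: a loop $\gamma$ with $\phi(\gamma) \neq 0$ clutches to a smooth family $E \to S^2$ whose vertical tangent bundle has $w_2 \neq 0$, and this is ruled out by the theorem of \cite{bk2} that every smooth family over $S^2$ with fibres homeomorphic to $K3$ has $w_2(TE) = 0$; the input is that the mod $2$ Seiberg--Witten invariant of the spin structure is odd \cite{ms}, and the translation into $\phi = 0$ is the one explained in \cite{km}. However, the claim you single out as ``the decisive point'' — uniqueness of the spin structure on a simply-connected spin $4$-manifold — is a red herring, and your own example refutes it: $S^2 \times S^2$ also has a unique spin structure, yet $\phi \neq 0$ there. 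Uniqueness of the spin structure contributes nothing; the content is entirely in the families index identity $c_1(\mathrm{ind}\, D) \equiv w_2(H^+) \pmod 2$, which is only valid because the fibre has odd mod $2$ invariant.

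The genuine gap is in the stabilized case $X = X' \# (S^2 \times S^2)$. You propose to run the same families argument using Proposition \ref{prop:glue}, but neither tool applies. Proposition \ref{prop:glue} computes mod $2$ invariants of mapping tori of diffeomorphisms $f' \# \rho$, i.e.\ of $1$-parameter families attached to classes in $M(X)$; the homomorphism $\phi$ lives on $\pi_1(Diff(X))$, so obstructing it requires an invariant of families over $S^2$ — a different object entirely. Moreover, the hypothesis that powers the $K3$ case is destroyed by stabilization: on $X' \# (S^2 \times S^2)$ the spin spin$^c$-structure has $d(\mathfrak{s}) = -1$ (rather than $0$ as on $K3$), and all ordinary Seiberg--Witten invariants of a connected sum with $b_+ > 0$ on both sides vanish, so there is no spin$^c$-structure with odd mod $2$ invariant to feed into the argument of \cite{bk2}. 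This is exactly why the paper takes a different route: if $\phi \neq 0$, exactness of $\pi_1(Diff(X)) \to \mathbb{Z}_2 \to M_1(X) \to M(X) \to 1$ forces the boundary Dehn twist to be trivial in $M_1(X)$, which would make the Dehn twist on the neck of $K3 \# X'$ isotopic to the identity after a single stabilization, contradicting Lin's theorem \cite{lin} (whose proof uses $\mathrm{Pin}(2)$-equivariant families Bauer--Furuta theory — a strictly stronger tool than the invariants of Section \ref{sec:sw1}, and valid for any $X'$ homeomorphic to $K3$). Without an input of that strength, your plan for the second half does not close.
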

\begin{proof}
In \cite{bk2}, it is proven that if $E \to S^2$ is a smooth family of $K3$ surfaces over $S^2$, then $w_2(TE) = 0$. As explained in \cite{km}, this implies that the homomorphism $\phi$ is zero. The same argument works for any $X$ that is homeomorphic to $K3$, since by \cite{ms}, the Seiberg--Witten invariant of the spin structure of $X$ is odd.

Next, suppose $X = X' \# (S^2 \times S^2)$, where $X'$ is homeomorphic to $K3$. Suppose that $\phi$ is non-zero. This means that the boundary Dehn twist $\tau \in M_1(X)$ is trivial. But this would imply that the Dehn twist on the neck of $K3 \# X'$ becomes trivial upon connected sum with $S^2 \times S^2$. However this contradicts \cite{lin} (in \cite{lin} the theorem is stated only for $X' = K3$, but the exact same proof works for any smooth $4$-manifold homeomorphic to $K3$).
\end{proof}

Recall that an involution $f$ on a simply-connected spin $4$-manifold $X$ is called even or odd according to whether or not $f$ lifts to an involution on the spin bundle of $X$.

\begin{proposition}\label{prop:odd}
Suppose $X$ is spin and that $\phi = 0$, so that the extension class $\xi_X \in H^2(M(X) ; \mathbb{Z}_2)$ is defined. Suppose that $f$ is an odd involution. Then $\xi_X(f) \neq 0$. In particular, the extension $\mathbb{Z}_2 \to M_1(X) \to M(X)$ is non-trivial.
\end{proposition}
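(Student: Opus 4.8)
The plan is to run the argument entirely inside the group $\pi_0(SpinDiff(X))$, which the preceding discussion identifies with $M_1(X)$, using the central extension
\[
1 \to \mathbb{Z}_2 \to SpinDiff(X) \to Diff(X) \to 1
\]
whose kernel is generated by the nontrivial deck transformation $\tau$ of the double cover $\widetilde{F}(X) \to F(X)$. First I would record two preliminary facts. One, $\tau$ is central in $SpinDiff(X)$: for any lift $\tilde{g}$ of some $g_* : F(X) \to F(X)$, the conjugate $\tilde{g}\tau\tilde{g}^{-1}$ is again a deck transformation, hence equals $\tau$ since the deck group is $\mathbb{Z}_2$. Two, because $\phi = 0$ the connecting map kills nothing, so $\mathbb{Z}_2 \to M_1(X)$ is injective and the image $[\tau] \in M_1(X)$ of the deck transformation is the nontrivial generator of $K_1(X)$. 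This is the single point at which the hypothesis $\phi = 0$ enters.

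Next I would translate oddness of $f$ into a statement about squares. Choose a lift $\hat{f} = (f, \tilde{f}) \in SpinDiff(X)$, where $\tilde{f}$ is one of the two lifts of $f_*$ to $\widetilde{F}(X)$. Since $f^2 = \mathrm{id}$, the element $\tilde{f}^2$ is a lift of the identity, so $\tilde{f}^2 \in \{\mathrm{id}, \tau\}$, with $\tilde{f}^2 = \mathrm{id}$ exactly when $\tilde{f}$ is an involution. By the definition of an odd involution neither lift of $f_*$ is an involution, so $\tilde{f}^2 = \tau$; centrality of $\tau$ then shows the other lift $\tau\tilde{f}$ also squares to $\tau$. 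Hence every lift of $f$ to $SpinDiff(X)$ satisfies $\hat{f}^2 = \tau$.

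Passing to components, $[\hat{f}] \in M_1(X)$ is a lift of $[f] \in M(X)$ with $[\hat{f}]^2 = [\hat{f}^2] = [\tau] \neq 1$. (In passing this forces $[f] \neq 1$ in $M(X)$: were $[f]$ trivial, $[\hat{f}]$ would lie in $K_1(X) \cong \mathbb{Z}_2$ and square to $1$, contradicting $[\hat{f}]^2 = [\tau] \neq 1$.) Thus both lifts of the order-two element $[f]$ to $M_1(X)$ have order four, so the preimage of $\langle f \rangle \cong \mathbb{Z}_2$ is cyclic of order $4$ rather than $\mathbb{Z}_2 \times \mathbb{Z}_2$. Equivalently, the restriction of the extension to $\langle f \rangle$ is non-split, which is precisely the assertion that $\xi_X(f) = \xi_X|_{\langle f \rangle} \neq 0$ in $H^2(\langle f \rangle ; \mathbb{Z}_2) \cong \mathbb{Z}_2$. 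In particular $\xi_X \neq 0$, so the extension $\mathbb{Z}_2 \to M_1(X) \to M(X)$ does not split.

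The computations themselves are short; the hard part will be the bookkeeping of the first paragraph, namely verifying centrality of $\tau$ cleanly and confirming that $\phi = 0$ is exactly what keeps $[\tau]$ nonzero after passing to $\pi_0$, together with making precise the convention that $\xi_X(f)$ denotes the restriction of $\xi_X$ along $\langle f \rangle \hookrightarrow M(X)$.
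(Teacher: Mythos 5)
Your proof is correct and takes essentially the same route as the paper: both arguments identify $M_1(X)$ with $\pi_0(SpinDiff(X))$ and use the fact that oddness of $f$ forces every lift of $f$ to the spin bundle to square to the deck transformation $\tau$, whose class survives in $\pi_0$ exactly because $\phi = 0$. Your write-up simply makes explicit the details (centrality of $\tau$, injectivity of $\mathbb{Z}_2 \to M_1(X)$, and the cyclic-of-order-four preimage of $\langle f \rangle$ forcing non-splitting of the restricted extension) that the paper's terse proof leaves implicit.
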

\begin{proof}
As explained above, the extension $1 \to \mathbb{Z}_2 \to M_1(X) \to M(X) \to 1$ is isomorphic to the extension $1 \to \mathbb{Z}_2 \to \pi_0(SpinDiff(X) ) \to M(X) \to 1$. But $f$ defines a class $[f] \in M(X)$ such that $[f]^2 = 1$, but any lift of $f$ to the spin bundle is not an involution. So there is no splitting $M(X) \to M_1(X)$ and more precisely, $\xi_X(f) \neq 0$.
\end{proof}

\begin{corollary}
If $X = K3$ or $K3 \# (S^2 \times S^2)$, then $\xi_X \in H^2( M(X) ; \mathbb{Z}_2)$ is non-trivial.
\end{corollary}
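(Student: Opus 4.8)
The plan is to deduce the corollary immediately from Theorem \ref{thm:phi} together with Proposition \ref{prop:odd}. Both $K3$ and $K3 \# (S^2 \times S^2)$ are simply-connected and spin, and Theorem \ref{thm:phi} supplies $\phi = 0$ in each case, so the extension class $\xi_X \in H^2(M(X);\mathbb{Z}_2)$ is defined. By Proposition \ref{prop:odd} it then suffices to produce a single \emph{odd} involution $f$ on each of the two manifolds: one concludes $\xi_X(f) \neq 0$, and in particular $\xi_X \neq 0$. Thus the whole problem reduces to exhibiting odd involutions, and I expect the parity verification, especially in the $K3 \# (S^2 \times S^2)$ case, to be the main point; the passage through Theorem \ref{thm:phi} and Proposition \ref{prop:odd} is formal.

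For $X = K3$ I would take $f$ to be the Enriques involution, the free holomorphic involution whose quotient $K3/f$ is an Enriques surface $Y$. Since $K3$ is simply-connected it carries a unique spin structure, which is preserved by the orientation-preserving map $f$. As $f$ is free, $f$ is even exactly when this spin structure descends to $Y$, that is, exactly when $Y$ is spin. But $c_1(Y) = -K_Y$ is a nonzero $2$-torsion class whose mod $2$ reduction $w_2(Y)$ is nonzero (the torsion summand injects under reduction), so $Y$ is not spin and the spin structure does not descend. Hence $f$ is odd. As a cross-check, $f$ is anti-symplectic, acting as $+1$ on the K\"ahler direction and as $-1$ on the plane spanned by the real and imaginary parts of the holomorphic $2$-form, so it reverses a $2$-dimensional positive subspace of $H^2(K3;\mathbb{R})$; this gives $w_2(H^+)(f) = \binom{2}{2} t^2 = t^2 \neq 0$, matching oddness via Theorem \ref{thm:pull}.

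For $X = K3 \# (S^2 \times S^2)$ I would build an odd involution by equivariant connected sum. On $K3$ take the anti-symplectic covering involution $\iota$ of the double branched cover $K3 \to \mathbb{CP}^2$ over a smooth sextic; its fixed locus is the ramification curve, along which $\iota$ acts as $+1$ tangentially and $-1$ normally, and being anti-symplectic it reverses a $2$-dimensional positive subspace of $H^2(K3;\mathbb{R})$. On $S^2 \times S^2$ take $\tau = \mathrm{id} \times r$, where $r$ is the rotation of $S^2$ by $\pi$; this is an involution fixing two copies of $S^2$ with normal action $-1$, and it acts trivially on $H^2(S^2 \times S^2)$. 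The two local fixed-point models agree (each is $\mathrm{diag}(1,1,-1,-1)$ along a surface), so I can excise invariant balls and glue equivariantly to obtain an involution $F = \iota \, \#_{\mathrm{eq}} \, \tau$ on $K3 \# (S^2 \times S^2)$. On cohomology $F^*$ splits as $\iota^* \oplus \tau^*$, so it reverses the same $2$-dimensional positive subspace as $\iota$ and fixes the positive class from the $S^2 \times S^2$ summand; the positive part of its $(-1)$-eigenspace is again $2$-dimensional, whence $w_2(H^+)(F) = t^2 \neq 0$, which by Theorem \ref{thm:pull} is equivalent to $\xi_X(F) \neq 0$, i.e. to $F$ being odd. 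Proposition \ref{prop:odd} then closes this case.

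The hard part will be the $K3 \# (S^2 \times S^2)$ step: one must check that the equivariant connected sum genuinely produces $K3 \# (S^2 \times S^2)$ (matching normal data and orientations of the two fixed surfaces) and, above all, that $F$ is odd. I would verify oddness lattice-theoretically, computing that the positive part of the anti-invariant subspace has dimension $\equiv 2 \pmod 4$ so that $w_2(H^+) \neq 0$, which is legitimate provided Theorem \ref{thm:pull} is established independently of this corollary; alternatively one verifies oddness directly by analysing the behaviour of the spin structure along the fixed surfaces of $F$, since $F$ is not free and the clean descent argument used for the Enriques involution does not apply. Either way, confirming the parity of $F$ is the crux.
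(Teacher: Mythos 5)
Your reduction is exactly the paper's: the paper's entire proof of this corollary is that it is immediate from Theorem \ref{thm:phi} and Proposition \ref{prop:odd} ``since both $K3$ and $K3 \# (S^2 \times S^2)$ admit odd involutions,'' with the existence of the odd involutions left as known. Your added content is the explicit construction of these involutions, and the $K3$ case is done correctly and completely: the Enriques involution is free, it preserves the unique spin structure, evenness would be equivalent to the quotient Enriques surface being spin, and the quotient is not spin since $K_Y$ is nonzero $2$-torsion whose mod $2$ reduction $w_2(Y)$ is nonzero. That is a clean, self-contained verification of oddness.

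The gap is in the $K3 \# (S^2 \times S^2)$ case, and it is the step you yourself flag as the crux. Your primary verification of oddness of $F$ invokes Theorem \ref{thm:pull}, but that theorem is stated and proved only for manifolds \emph{homeomorphic to} $K3$: its proof rests on Proposition \ref{prop:w2+}, which uses the Morgan--Szab\'o result that a homotopy $K3$ has odd Seiberg--Witten invariant for the spin structure, and this input disappears after connected sum with $S^2 \times S^2$ (the invariant vanishes). So the implication $w_2(H^+)(F) \neq 0 \Rightarrow \xi_X(F) \neq 0$ has no basis here, independence from the corollary is not the issue. Your fallback (analyse the spin lift along the fixed surfaces) is the correct route, and you should carry it out; it is short. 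For a genuine involution $f$ with $\phi = 0$, oddness is equivalent to $\xi_X(f) \neq 0$, and oddness can be read off locally at the fixed-point set: if the fixed locus contains a $2$-dimensional component, then near that surface $f$ acts on the normal plane as rotation by $\pi$, whose lift to $Spin(2) \subset Spin(4)$ squares to $-1$; hence any lift of $f$ to the spin bundle squares to the nontrivial deck transformation and $f$ is odd (whereas isolated fixed points force lifts to square to $+1$). Your $F$, being an equivariant connected sum performed at points of the fixed surfaces, has $2$-dimensional fixed components (the internal sum of the sextic with one fixed sphere, plus the other sphere), so $F$ is odd, and in fact the same criterion shows the branched-cover involution $\iota$ on $K3$ is already odd, giving an alternative to the Enriques argument there as well. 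With this substitution for the appeal to Theorem \ref{thm:pull}, your proposal is complete.
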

\begin{proof}
This is immediate from Theorem \ref{thm:phi} and Proposition \ref{prop:odd}, since both $K3$ and $K3 \# (S^2 \times S^2)$ admit odd involutions.
\end{proof}

In what follows we will completely determine the class $\xi_X \in H^2( M(X) ; \mathbb{Z}_2)$ when $X$ is homeomorphic to $K3$.

\begin{proposition}\label{prop:w2}
Let $\pi : E \to B$ be a smooth fibre bundle, where $B$ is a compact surface and the fibres of $E$ are diffeomorphic to a compact, simply-connected, smooth spin $4$-manifold $X$. Then
\begin{itemize}
\item[(1)]{There exists a spin$^c$-structure $\mathfrak{s}_{E/B}$ on the vertical tangent bundle $TE/B = Ker(\pi_*)$ whose restriction to each fibre is spin.}
\item[(2)]{Let $ind(D) \in K^0(B)$ denote the families index of the Dirac operator $D$ with respect to the spin$^c$-structure $\mathfrak{s}_{E/B}$. Then 
\[
c_1( ind(D) ) = (\sigma(X)/16) w_2( TE/B ) \; ({\rm mod} \; 2).
\]}
\end{itemize}
\end{proposition}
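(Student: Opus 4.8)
The plan is to prove (1) by a Serre spectral sequence argument and (2) by the Atiyah--Singer families index theorem, the crucial point being that the determinant class of $\mathfrak{s}_{E/B}$ can be taken to be pulled back from the base.

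For (1), I would start from the observation that since every fibre is spin, $w_2(T_{E/B})$ restricts to $w_2(TX)=0$ on each fibre. Running the Serre spectral sequence of $\pi$ with $\mathbb{Z}_2$ coefficients, simple-connectivity of $X$ gives $H^1(X;\mathbb{Z}_2)=0$, so $E_2^{0,1}=E_2^{1,1}=0$. Consequently the edge homomorphism $\pi^*\colon H^2(B;\mathbb{Z}_2)\to H^2(E;\mathbb{Z}_2)$ is injective with image the subgroup $F^2 H^2(E;\mathbb{Z}_2)$, and this subgroup is precisely the kernel of restriction to a fibre. Hence $w_2(T_{E/B})=\pi^*b$ for a unique $b\in H^2(B;\mathbb{Z}_2)$. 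For any compact surface $B$ the reduction $H^2(B;\mathbb{Z})\to H^2(B;\mathbb{Z}_2)$ is surjective, so $b$ lifts to some $\tilde b\in H^2(B;\mathbb{Z})$; then $c:=\pi^*\tilde b$ is an integral class reducing mod $2$ to $w_2(T_{E/B})$ and restricting to $0$ on each fibre. This $c$ is the determinant class of the desired spin$^c$-structure $\mathfrak{s}_{E/B}$, which is therefore spin on every fibre. (Running the same spectral sequence integrally, using $H^1(X;\mathbb{Z})=0$, in fact shows that \emph{any} determinant class that is fibrewise spin is automatically pulled back from $B$, so the choice above is not a loss of generality.)

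For (2), I would set $c=c_1(\mathfrak{s}_{E/B})=\pi^*\tilde b$ and $p_1=p_1(T_{E/B})$ and apply the families index theorem,
\[
\mathrm{ch}(\mathrm{ind}(D))=\pi_!\bigl(\hat A(T_{E/B})\,e^{c/2}\bigr),
\]
where $\pi_!$ is integration over the $4$-dimensional fibre. Then $c_1(\mathrm{ind}(D))$ is $\pi_!$ applied to the degree-$6$ component of the integrand, which is $\tfrac{1}{48}(c^3-p_1 c)$. Two simplifications now occur. First, because $c$ is pulled back from the surface $B$ we have $c^2=\pi^*(\tilde b^2)=0$, since $\tilde b^2\in H^4(B)=0$; thus the $c^3$ term vanishes. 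Second, by the projection formula $\pi_!(p_1 c)=\pi_!(p_1)\,\tilde b=3\sigma(X)\,\tilde b$, using the signature theorem $\int_X p_1(TX)=3\sigma(X)$. Therefore $c_1(\mathrm{ind}(D))=-\tfrac{\sigma(X)}{16}\tilde b$, which is an integral class because Rokhlin's theorem gives $16\mid\sigma(X)$. Reducing mod $2$ and using $\tilde b\equiv b$ together with $\pi^*b=w_2(T_{E/B})$ yields the asserted identity.

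The point that needs the most care is part (1): the entire collapse in (2) depends on being able to choose the determinant class to be pulled back from $B$, which is exactly what forces $c^2=0$ and lets the projection formula isolate the signature. This in turn rests on the simple-connectivity of $X$ to kill the $E_2^{\bullet,1}$ line of the spectral sequence. One should also be explicit about where the two sides of the stated equation live: $c_1(\mathrm{ind}(D))\in H^2(B;\mathbb{Z})$ whereas $w_2(T_{E/B})\in H^2(E;\mathbb{Z}_2)$, so the identity is to be read in $H^2(B;\mathbb{Z}_2)$ after identifying $w_2(T_{E/B})$ with $b$ via the injective pullback $\pi^*$.
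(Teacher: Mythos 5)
Your part (1) is correct, and is a reasonable self-contained substitute for the paper's proof, which simply cites \cite[Proposition 2.1]{bar1}: the Serre spectral sequence argument using $H^1(X;\mathbb{Z}_2)=H^1(X;\mathbb{Z})=0$ does show that $w_2(TE/B)$ is pulled back from $B$ and admits an integral lift of the form $\pi^*\tilde b$. The gap is in part (2). The proposition is stated for an \emph{arbitrary} compact surface $B$, and the non-orientable case is the one the paper actually needs: Proposition \ref{prop:w2+} reduces to surfaces via Steenrod representability of $\mathbb{Z}_2$-homology classes in degree $2$, which requires non-orientable surfaces (and the downstream applications, e.g. families over $\mathbb{RP}^2$ and the evaluation of $\xi_X$ on involutions, live over non-orientable bases). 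When $B$ is a closed non-orientable surface, $H^2(B;\mathbb{Q})=0$ and $H^2(B;\mathbb{Z})\cong\mathbb{Z}_2$, so $c_1(\mathrm{ind}(D))$ is pure $2$-torsion. The cohomological families index theorem you invoke is an identity in $H^*(B;\mathbb{Q})$: both sides vanish identically there, and the Chern character determines $c_1$ only modulo torsion. Hence your computation proves the claim only when $H^2(B;\mathbb{Z})$ is torsion-free, i.e. for orientable $B$. Passing to the orientation double cover $p:\widetilde B\to B$ cannot rescue the argument, because $p^*$ annihilates $H^2(B;\mathbb{Z}_2)$ (evaluate against fundamental classes: $p_*[\widetilde B]=2[B]=0$ mod $2$), so the orientable case detects nothing about the class you need.

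This is exactly why the paper does not argue through the rational index theorem. Its proof of (2) is an integral, cocycle-level identification of the determinant line bundle $\mathcal{L}=\det(\mathrm{ind}(D))$: lift the $Diff(X)$-valued transition functions of $E$ to $SpinDiff(X)$; since $\mathfrak{s}_{E/B}$ is fibrewise spin, the fibrewise Dirac operators carry a quaternionic structure, so the lifted diffeomorphisms act trivially on the determinant line of the index, and $\mathcal{L}$ is governed entirely by the central $U(1)$-cocycle, which acts with weight equal to the virtual rank $d=-\sigma(X)/8$. Writing that $U(1)$-cocycle as a square $h_{ij}^2=f_{ij}$, where $f_{ij}$ is a cocycle for a line bundle with Chern class $c(\mathfrak{s}_{E/B})$, exhibits $\mathcal{L}$ as the $(-\sigma(X)/16)$-th power of that line bundle. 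This is an identity in $H^2(B;\mathbb{Z})$ valid over any base, whose mod $2$ reduction is the assertion. To close the gap in your proof you need an input of this kind --- something that sees the quaternionic (fibrewise spin) refinement of the index, such as the paper's transition-function argument --- rather than the rational Chern character formula alone.
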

\begin{proof}
(1) follows immediately from \cite[Proposition 2.1]{bar1}. The Dirac operator $D$ for the spin$^c$-structure $\mathfrak{s}_{E/B}$ defines a family of elliptic operators parametrised by $B$ and $ind(D)$ is the families index. Then $c_1( ind(D)) = c_1(\mathcal{L})$, where $\mathcal{L} = det(ind(D))$ is the determinant line bundle of $D$. Suppose that the family $E$ is determined by transition function $\psi_{ij}$ valued in $Diff(X)$. Let $\widetilde{\psi}_{ij}$ be lifts of $\psi_{ij}$ to $SpinDiff(X)$. Then $\widetilde{\psi}_{ij} \widetilde{\psi}_{jk} \widetilde{\psi}_{ki} = g_{ijk}$, where $g_{ijk}$ is a $\mathbb{Z}_2$-valued cocycle, defining a class $[g_{ijk}] \in H^2( B ; \mathbb{Z}_2)$. Clearly $w_2( TE/B) = [g_{ijk}]$. Observe that $c(\mathfrak{s}_{E/B}) \in H^2( B ; \mathbb{Z})$ is a lift of $[g_{ijk}]$ to integer coefficients. Therefore we can represent $c(\mathfrak{s}_{E/B})$ as an integer-valued $2$-cocycle $c_{ijk}$ such that $c_{ijk} = g_{ijk} \; ({\rm mod} \; 2)$. Choose real-valued smooth functions $u_{ij}$ such that $c_{ijk} = u_{ij} + u_{jk} + u_{ki}$. Set $f_{ij} = e^{2 \pi i u_{ij}}$. Then $f_{ij}$ define transition functions for a complex line bundle whose first Chern class is $[c_{ijk}]$. Note that $f_{ij} = h_{ij}^2$, where $h_{ij} = e^{\pi i u_{ij}}$. Then $h_{ij} h_{jk} h_{ki} = (-1)^{g_{ijk}}$. Define $Spin^c Diff(X) = U(1) \times_{\mathbb{Z}_2} SpinDiff(X)$. Then $\varphi_{ij} = h_{ij} \widetilde{\psi}_{ij}$ is a $2$-cocycle valued in $Spin^cDiff(X)$.

Consider now the transition functions for the determinant line bundle $\mathcal{L}$. Since $\mathfrak{s}_{E/B}$ restricts to a spin structure on the fibres, the spinor bundles have a quaternionic structure on each fibre. It follows that $\widetilde{\psi}_{ij}$ induces a trivial action on the determinant line. However, the $U(1)$-factor $h_{ij}$ in $\varphi_{ij} = h_{ij} \widetilde{\psi}_{ij}$ acts on the spinor bundles as scalar multiplication which then acts on the determinant line by $h_{ij}^{d}$, where $d$ is the virtual rank of $ind(D)$, which is $d = -\sigma(X)/8$. Hence $\mathcal{L}$ has transition functions $h_{ij}^{-\sigma(X)/8} = f_{ij}^{-\sigma(X)/16}$. Recalling that $f_{ij}$ are transition functions for a line bundle with Chern class $c(\mathfrak{s}_{E/B})$, it follows that
\[
c_1(ind(D)) = c_1(\mathcal{L}) = -\frac{\sigma(X)}{16} c( \mathfrak{s}_{E/B}) = \frac{\sigma(X)}{16} w_2( TE/B) \; ({\rm mod} \; 2).
\]
\end{proof}

\begin{proposition}\label{prop:w2+}
Let $\pi : E \to B$ be a smooth fibre bundle, where the fibres of $E$ are homeomorphic to $K3$. Then $w_2(TE/B) = w_2(H^+)$, where $H^+ \to B$ denote the bundle whose fibre over $b$ is a maximal positive definite subspace of $H^2( E_b ; \mathbb{R})$.
\end{proposition}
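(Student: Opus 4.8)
The plan is to show that $w_2(TE/B)$ and $w_2(H^+)$ are one and the same class in $H^2(B;\mathbb{Z}_2)$, realised as the obstruction to lifting the structure group of the family from $Diff(X)$ to $SpinDiff(X)$. First I would record that both classes come from the base. Since the fibre $X$ is spin and simply connected, in the Serre spectral sequence $E_2^{p,q}=H^p(B;H^q(X;\mathbb{Z}_2))$ the term $E_2^{1,1}$ vanishes (as $H^1(X;\mathbb{Z}_2)=0$) and the fibre restriction of $w_2(TE/B)$ is $w_2(TX)=0$; hence $w_2(TE/B)=\pi^*\omega$ for a unique $\omega\in H^2(B;\mathbb{Z}_2)$, with $\pi^*$ injective. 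The same reasoning applies to $\pi^*H^+$ (its fibre restriction is $w_2$ of a trivial bundle), so it suffices to compare $\omega$ with $w_2(H^+)$. As in the proof of Proposition \ref{prop:w2}, $\omega=[g_{ijk}]$ is precisely the $\mathbb{Z}_2$-valued \v{C}ech cocycle measuring the failure of transition functions $\psi_{ij}\in Diff(X)$ to lift to $SpinDiff(X)$; and since $\sigma(X)=-16$ for any $X$ homeomorphic to $K3$, so that $\sigma(X)/16\equiv 1\ (\mathrm{mod}\ 2)$, that Proposition also gives $\omega=c_1(ind(D))\ (\mathrm{mod}\ 2)$.

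The conceptual backbone is then to prove that the $\mathbb{Z}_2$-central extension $SpinDiff(X)\to Diff(X)$ is the pullback, under the representation $Diff(X)\to SO(H^+)\cong SO(3)$ on a maximal positive subspace, of the spin double cover $Spin(3)=SU(2)\to SO(3)$. If so, the lifting cocycle $[g_{ijk}]$ agrees with the $Spin(3)$-lifting cocycle of $H^+$, which is $w_2(H^+)$ by definition, and we are done. The key fibrewise input is the universal identity $w_2(\Lambda^+ V)=w_2(V)$ for the vertical tangent bundle $V=TE/B$: the representation $SO(4)\xrightarrow{\Lambda^+}SO(3)$ is nontrivial on $\pi_1$ (it factors as $SO(4)\to SU(2)_+/\{\pm1\}=SO(3)_+$, and a generator of $\pi_1(SO(4))$ maps to the generator of $\pi_1(SO(3))$), so it pulls $w_2^{SO(3)}$ back to $w_2^{SO(4)}$. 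Equivalently $\Lambda^+\cong\mathrm{ad}(S^+)$, and $Spin(4)\to SO(4)\to SO(3)_+$ identifies the spin-frame double cover with the $SU(2)\to SO(3)$ cover of the self-dual frame bundle, so that a spin lift of a diffeomorphism is the same datum as an $SU(2)$-lift of its induced action on self-dual forms.

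To make the comparison concrete for manifolds merely homeomorphic to $K3$ — where no complex or hyperk\"ahler structure, and hence no parallel spinors, is available — I would realise $w_2(H^+)$ as an index. Choosing a fibrewise metric and using $H^1(X)=0$, the family of anti-self-dual deformation operators $\delta=d^{*}\oplus d^{+}:\Omega^1\to\Omega^0\oplus\Omega^+$ has $\ker\delta=0$ and $\mathrm{coker}\,\delta=\underline{\mathbb{R}}\oplus H^+$, so $[H^+]=-[ind_{KO}(\delta)]-[\underline{\mathbb{R}}]$ in $KO^0(B)$ and $w_2(H^+)=w_2(ind_{KO}(\delta))$. Complexifying, the standard isomorphisms $\Omega^1\otimes\mathbb{C}\cong S^+\otimes S^-$ and $(\Omega^0\oplus\Omega^+)\otimes\mathbb{C}\cong S^+\otimes S^+$ present $\delta_{\mathbb{C}}$ as $\mathrm{id}_{S^+}\otimes D$, tying the index of $\delta$ to the Dirac index $ind(D)$ whose mod $2$ first Chern class is exactly $\omega$. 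Matching the two $\mathbb{Z}_2$-obstruction cocycles through this dictionary then yields $w_2(H^+)=\omega=w_2(TE/B)$.

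The hard part will be the passage from the pointwise self-dual bundle $\Lambda^+V\to E$ to the harmonic bundle $H^+\to B$, together with the extraction of the genuinely real invariant $w_2(H^+)$. The difficulty is that complexification loses it: $w_2((H^+)\otimes\mathbb{C})=w_1(H^+)^2$, so the complex Dirac index $c_1(ind(D))$ cannot see $w_2(H^+)$ naively, and the comparison must be carried out at the level of the real $KO$-index (or directly on the $\mathbb{Z}_2$-gerbe/cocycle data), where the fibrewise correspondence $\Lambda^+\cong\mathrm{ad}(S^+)$ has to be globalised and shown to be $Diff(X)$-equivariantly compatible with harmonic projection. Proposition \ref{prop:w2}, forced by $\sigma(X)=-16$, is precisely the device that replaces the unavailable parallel-spinor argument and pins $\omega$ down as $c_1(ind(D))\ (\mathrm{mod}\ 2)$, so the crux is to show that this same class is computed by the real index realising $H^+$.
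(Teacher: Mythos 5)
Your outline correctly reduces the problem to showing $c_1(\mathrm{ind}(D)) \equiv w_2(H^+) \pmod 2$ (the identity $w_2(TE/B) \equiv c_1(\mathrm{ind}(D))$ from Proposition \ref{prop:w2} together with $\sigma(X)/16 \equiv 1$ is exactly how the paper begins as well), but the step you defer to the end --- matching $c_1(\mathrm{ind}(D)) \bmod 2$ with the class of the real index bundle realising $H^+$ --- is not a technical globalisation issue: it is the entire content of the proposition, and it cannot be proved by the fibrewise representation theory and index theory you propose. Any such argument (Hodge theory of $\delta = d^*\oplus d^+$, the isomorphisms $\Omega^1_{\mathbb{C}}\cong S^+\otimes S^-$ and $(\Omega^0\oplus\Omega^+)_{\mathbb{C}}\cong S^+\otimes S^+$, index formulas) applies verbatim to every simply-connected spin fibre with $H^1=0$, whereas the identity $c_1(\mathrm{ind}(D))\equiv w_2(H^+)$ is false in that generality. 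Concretely, take $X = K3\# K3$, let $\tau$ be a smooth involution exchanging the two summands, and let $E = S^2\times_{\mathbb{Z}_2}X\to\mathbb{RP}^2$ be the associated smooth family. Its monodromy swaps the two copies of $L_{K3}$, so one may take $H^+\cong\underline{\mathbb{R}}^3\oplus 3\gamma$ ($\gamma$ the tautological line bundle), giving $w_2(H^+) = 3a^2 = a^2\neq 0$ in $H^2(\mathbb{RP}^2;\mathbb{Z}_2)$; yet Proposition \ref{prop:w2} gives $c_1(\mathrm{ind}(D)) = (\sigma/16)\,w_2(TE/B) = -2\,w_2(TE/B)\equiv 0 \pmod 2$. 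Likewise your ``conceptual backbone'' --- that $SpinDiff(X)\to Diff(X)$ is pulled back from $Spin(3)\to SO(3)$ via the action on $H^+$ --- already fails for $S^2\times S^2$: the loop of rotations of one sphere factor has $\phi\neq 0$ (as noted in Section \ref{sec:bdt}), while $H^+$ there has rank $1$, so the pulled-back $w_2$ vanishes. What singles out $K3$ is not $\sigma=-16$ (which enters only the step you do justify) but gauge theory: the paper feeds Morgan--Szab\'o's theorem \cite{ms}, that any $X$ homeomorphic to $K3$ has odd mod $2$ Seiberg--Witten invariant for its spin structure, into the families Seiberg--Witten/Bauer--Furuta constraint of \cite[Corollary 1.3]{bk2}, which asserts exactly $c_1(\mathrm{ind}(D)) = w_2(H^+)$ under that hypothesis. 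Since the $K3\#K3$ family above satisfies every hypothesis your plan actually uses, no argument avoiding such a gauge-theoretic input can close your gap.

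There is also a structural circularity in the proposed mechanism. Over the total space $E$ only $\Lambda^+(TE/B)\cong\mathrm{ad}(S^+)$ exists globally; the spinor bundle $S^+$ itself exists as a bundle on $E$ precisely when the structure group lifts to $SpinDiff(X)$, i.e. precisely when the class $w_2(TE/B)$ you are trying to compute vanishes, so the families identification $\delta_{\mathbb{C}}\cong\mathrm{id}_{S^+}\otimes D$ presupposes what is to be proven. (Even fibrewise, $\delta_{\mathbb{C}}$ is the Dirac operator \emph{twisted} by $S^+$, and in families its index is not determined by $\mathrm{ind}(D)$.) Similarly, a diffeomorphism does not preserve a chosen maximal positive subspace, so there is no homomorphism $Diff(X)\to SO(H^+)\cong SO(3)$, only a homotopy class of maps $BDiff(X)\to BO(3)$ classifying $H^+$; once your extension-pullback claim is restated correctly as an equality in $H^2(BDiff(X);\mathbb{Z}_2)$, it is the proposition itself --- true for $K3$ by the gauge-theoretic argument, false for general spin fibres.
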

\begin{proof}
Since $H^2(B ; \mathbb{Z}_2)$ is detected by maps of compact surfaces into $B$, it suffices to prove the result when $B$ is a compact surface. Then by Proposition \ref{prop:w2}, $w_2(TE/B) = c_1( ind(D)) \; ({\rm mod} \; 2)$. On the other hand, since the fibres are homeomorphic to $K3$, their Seiberg--Witten with respect to the spin structure is odd \cite{ms}. Then by \cite[Corollary 1.3]{bk2},  $c_1(ind(D)) = w_2(H^+)$.
\end{proof}

Let $L$ be a lattice and $A = Aut(L)$ the group of automorphisms. Over the classifying space $BA$ we have the tautological flat bundle $H = EA \times_{A} L$. Let $H^+ \to BA$ be a maximal positive subbundle. This defines a characteristic class $w_2(H^+) \in H^2( Aut(L) ; \mathbb{Z}_2)$.

\begin{theorem}
Let $X$ be a smooth $4$-manifold which is homeomorphic to $K3$. Let $L_X$ be the intersection lattice of $X$. Then the extension class $\xi_X \in H^2( M(X) ; \mathbb{Z}_2)$ is the pullback of $w_2(H^+) \in H^2( Aut(L_X) ; \mathbb{Z}_2 )$ under the map $M(X) \to Aut(L_X)$.
\end{theorem}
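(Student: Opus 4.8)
The plan is to prove the identity of the two classes in $H^2(M(X);\mathbb{Z}_2)$ by pairing against the $\mathbb{Z}_2$-homology classes carried by surfaces. Write $\mu : M(X) \to Aut(L_X)$ for the natural map and let $\alpha = \mu^* w_2(H^+) \in H^2(M(X);\mathbb{Z}_2)$; the goal is to show $\xi_X = \alpha$. Since $\mathbb{Z}_2$ is a field, $H^2(M(X);\mathbb{Z}_2) \cong Hom(H_2(M(X);\mathbb{Z}_2),\mathbb{Z}_2)$, and it is a standard fact that $H_2(BM(X);\mathbb{Z}_2)$ is generated by the images of fundamental classes of closed surfaces, orientable and non-orientable, under maps $\Sigma \to BM(X)$, i.e. by homomorphisms $\rho : \pi_1(\Sigma) \to M(X)$. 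Hence it suffices to prove that $\langle \rho^*\xi_X , [\Sigma]\rangle = \langle \rho^*\alpha , [\Sigma]\rangle$ in $\mathbb{Z}_2$ for every closed surface $\Sigma$ and every $\rho$.

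Given such a $\rho$, I would realise it by a smooth fibre bundle $\pi : E \to \Sigma$ with fibres $X$, exactly as in the proof of Theorem \ref{thm:nolift}: lift the generators of $\pi_1(\Sigma)$ to diffeomorphisms, observe that the defining relator word maps to the identity of $M(X)$ and is therefore isotopic to the identity, and fill in the $2$-cell using a choice of such isotopy. By construction the monodromy of $R^2\pi_*\mathbb{Z}$ is $\mu\circ\rho$, so the flat bundle $H^+ \to \Sigma$ has monodromy $\mu\circ\rho$ and $w_2(H^+ \to \Sigma) = \rho^*\alpha$; evaluation on $[\Sigma]$ gives the right-hand side. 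Note that both outer quantities $\rho^*\xi_X$ and $\rho^*\alpha$ depend only on $\rho$, so the choices made in building $E$ are immaterial to the final comparison.

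The heart of the matter is to identify the left-hand side with $w_2(TE/\Sigma)$. By the \v{C}ech computation in Proposition \ref{prop:w2}, the class $w_2(TE/\Sigma) \in H^2(\Sigma;\mathbb{Z}_2)$ is exactly the obstruction $[g_{ijk}]$ to lifting the $Diff(X)$-valued transition functions of $E$ to $SpinDiff(X)$. Because the fibres are spin and $\Sigma$ is built from a single $2$-cell attached to a wedge of circles, this obstruction is precisely the obstruction to lifting the discrete monodromy $\rho : \pi_1(\Sigma) \to M(X)$ along $M_1(X) = \pi_0(SpinDiff(X)) \to M(X)$, that is, $\rho^*\xi_X$. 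The crucial input here is that $\phi = 0$, which holds by Theorem \ref{thm:phi}: this guarantees both that $M_1(X) \to M(X)$ is a genuine central $\mathbb{Z}_2$-extension with class $\xi_X$, and that the extension $SpinDiff(X) \to Diff(X)$ is trivial over the identity component $Diff_0(X)$, so that the lifting obstruction sees only the $\pi_0$-level data and descends to the discrete extension class. This gives $\langle \rho^*\xi_X,[\Sigma]\rangle = \langle w_2(TE/\Sigma),[\Sigma]\rangle$.

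Finally, Proposition \ref{prop:w2+} yields $w_2(TE/\Sigma) = w_2(H^+ \to \Sigma)$, since the fibres are homeomorphic to $K3$. Chaining the three identities gives $\langle \rho^*\xi_X,[\Sigma]\rangle = \langle w_2(TE/\Sigma),[\Sigma]\rangle = \langle w_2(H^+\to\Sigma),[\Sigma]\rangle = \langle \rho^*\alpha,[\Sigma]\rangle$ for all $\rho$ and $\Sigma$, whence $\xi_X = \alpha$. The step I expect to be the main obstacle is the third paragraph: rigorously matching the obstruction to a coherent fibrewise-spin structure on $TE/\Sigma$, which a priori is defined at the level of the full topological group $Diff(X)$, with the discrete central extension class $\xi_X$ of $\pi_0$ — in particular verifying that the $\mathbb{Z}_2$ ambiguity of spin lifts of a diffeomorphism is exactly the kernel of $M_1(X)\to M(X)$, and that the hypothesis $\phi=0$ makes this identification consistent across $Diff_0(X)$ so that the obstruction genuinely factors through $\rho$.
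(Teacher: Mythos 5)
Your proposal is correct and follows essentially the same route as the paper's own proof: detect $H^2(M(X);\mathbb{Z}_2)$ by maps of closed surfaces, realise each $\rho : \pi_1(\Sigma) \to M(X)$ by a smooth family $E \to \Sigma$ built from mapping cylinders plus an isotopy over the $2$-cell, identify $\rho^*\xi_X$ with the obstruction $w_2(TE/\Sigma)$ to lifting the structure group to $SpinDiff(X)$, and conclude via Proposition \ref{prop:w2+}. The only difference is one of emphasis: the step you flag as the main obstacle (that $\phi = 0$ makes the $SpinDiff(X)$-lifting obstruction factor through the discrete extension class) is exactly what the paper dismisses as ``easily seen,'' and your justification of it is the correct one.
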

\begin{proof}
Let $B$ be a compact surface and consider a map $\iota : B \to BM(X)$. This is equivalent to a homomorphism $\rho : \pi_1(B) \to M(X)$. We claim that $\rho$ is the geometric monodromy of a family $E \to B$. We can take $B$ to be given by attaching a $2$-cell to a wedge of $k$ circles. Each circle defines a generator $g_i \in \pi_1(B)$ and the $2$-cell defines a relation $r = r(g_1, \dots , g_k)$, which is a word in the $g_i$. Choose a lift $f_i \in Diff(X)$ of $\rho(g_i) \in M(X)$. Then we can construct a family $E_1$ over the $1$-skeleton on $B$ as a wedge of mapping cylinders corresponding to the diffeomorphisms $f_1, \dots , f_k$. Since $g_1, \dots , g_k$ satisfy $r$, it follows that $r(f_1, \dots , f_k)$ is isotopic to the identity. Choosing such an isotopy, we can extend $E_1$ over the $2$-cell, giving the desired family $E \to B$. As explained in \cite[Remark 4.20]{bk2}, we can assume that the family $E \to B$ is smooth. Now consider the obstruction to lifting the structure group of $E$ to $SpinDiff(X)$. This is easily seen to coincide with the obstruction to lifting $\rho : \pi_1(B) \to M(X)$ to $M_1(X)$, which is $\iota^*( \xi_X) \in H^2(B ; \mathbb{Z}_2)$. On the other hand, the obstruction to lifting the structure group of $E$ to $SpinDiff(X)$ is $w_2(TE/B)$, which by Proposition \ref{prop:w2+} equals $w_2(H^+)$. Since $H^2( M(X) ; \mathbb{Z}_2)$ is detected by maps of compact surfaces $B$ into $BM(X)$, the result is proven.
\end{proof}


\bibliographystyle{amsplain}

\end{document}